\documentclass[]{article}
\usepackage[letterpaper,top=2cm,bottom=2cm,left=3cm,right=3cm,marginparwidth=1.75cm]{geometry}
\usepackage{hyperref}
\usepackage{enumitem} 
\usepackage{amsmath}
\usepackage{amsfonts}
\usepackage{amsthm}
\usepackage{amssymb}
\usepackage{tikz-cd}
\usepackage{mathrsfs}
\usepackage{mathtools}
\usepackage{textcomp}
\theoremstyle{plain}
\newtheorem{thm}{Theorem}[section]
\newtheorem{prop}[thm]{Proposition}
\newtheorem{cor}[thm]{Corollary}
\newtheorem{lemma}[thm]{Lemma}
\theoremstyle{definition}
\newtheorem{defn}[thm]{Definition}

\theoremstyle{remark}
\newtheorem{rmk}[thm]{Remark}
\newtheorem{notation}[thm]{Notation}
\title{Irreducibility of Newton strata in Picard modular surfaces and split local Galois representations}
\author{Haocheng Fan}
\newcommand{\QQ}{\mathbb{Q}}
\newcommand{\ZZ}{\mathbb{Z}}
\newcommand{\RR}{\mathbb{R}}

\newcommand{\CC}{\mathbb{C}}
\newcommand{\OO}{\mathcal{O}}

\newcommand{\GG}{\mathbf{G}}
\newcommand{\GSp}{\mathbf{GSp}}
\newcommand{\PP}{\mathbf{P}}
\newcommand{\MM}{\mathbf{M}}
\newcommand{\Gm}{\mathbb{G}_\mathbf{m}}
\newcommand{\T}{\mathbf{T}}
\newcommand{\XX}{\mathbf{X}}
\newcommand{\PPP}{\mathcal{P}}
\newcommand{\AAA}{\mathcal{A}}
\newcommand{\FF}{\mathbb{F}}
\newcommand{\E}{\mathcal{E}}
\newcommand{\etale}{\'etale}

\newcommand{\m}{\mathbf{m}}
\newcommand{\et}{\acute{e}t}
\newcommand{\II}{II}

\DeclareMathOperator{\coker}{coker}

\date{}
\begin{document}

\maketitle
\numberwithin{equation}{section}

\begin{abstract}
    We show that for a Picard modular form, the existence of companion forms is equivalent to the splitting properties of the associated local Galois representation. This result is obtained by using the computation of the monodromy group and the irreducibility for the closure of the non-ordinary Newton stratum in the special fiber of the Picard modular surface at a split prime.
\end{abstract}

\section{Introduction}
    In this paper, we find some analogs of the theorem due to Breuil and Emerton \cite[Thm. 1.1.3]{breuil2010representations}, which states that a $p$-ordinary cuspform $f$ has a companion form $g$ if and only if the attached $p$-adic representation is split. Our approach follows the original idea of Breuil--Emerton, which views the $p$-adic representation as the one cut out by the Hecke eigensystem in $p$-adic cohomology groups of geometric objects. However, in our case, the geometric objects are more complicated, so we need to analyze the monodromy group of the closed Newton stratum to compute some cohomology groups. This kind of result on the monodromy groups is related to Oort's Hecke orbit conjecture in positive characteristics and proved by Ching-Li Chai \cite{chai2005monodromy} in the Siegel case and Pol van Hoften \cite{van2024ordinary, van2024mod} in the Hodge type cases. We include a proof in our specific case in section 4 to clarify the core ideas and ensure a self-contained account.

    We introduce Breuil--Emerton's Theorem first. Let $N\ge4$ be an integer and $p$ be an odd prime such that $(p,N)=1$. Assume that $f$ is a cuspidal eigenform of weight $k\ge 2$, level $\Gamma_1(N)$, and character $\chi$. It is well known that there is a $2$-dim Galois representation $\sigma(f)$ over $\overline{\mathbb{Q}}_p$ associated with $f$, and let $\sigma_p(f)=\sigma(f)|_{\text{Gal}(\overline{\mathbb{Q}}_p/\mathbb{Q}_p)}$ be the restriction. Let $D_p:=D_{p,cris}(\sigma_p(f))$ be the associated $2$-dim filtered $\varphi$-module. The eigenvalues of $\varphi$ in $D_p$ are the two roots $\alpha_p,\beta_p$ of the polynomial
	$$  
        X^2-a_p(f)+p^{k-1}\chi(p)=0,
    $$
    where $a_p(f)$ is the eigenvalue of Hecke operator $T_p$ on $f$. It is not difficult to find that $\sigma_p(f)$ is reducible if and only if $v_p(\alpha_p)=k-1$ and $v_p(\beta_p)=0$. The following question is under what condition $\sigma_p(f)$ is split.
    
    Breuil and Emerton provided a method for attaching split local representations to companion forms. A key tool is the operator $\theta^{k-1}$ in the space of overconvergent forms. Let $X_1(N)$ be the compact modular curve of level $\Gamma_1(N)$, and $\pi:E\to X_1(N)$ be the universal elliptic curve. Let $C$ be the cusps on $X_1(N)$ and $\tilde{C}=\pi^{-1}(C)$. We denote 
    $$
        \mathcal{H}_{k-2}:=\text{Sym}^{k-2}(\mathcal{H}_{log-dR}^1(E/X_1(N))), \ \  \omega:=\pi_*\Omega^1_{E/X_1(N)}(log\tilde{C}). 
    $$ The Gauss--Manin connection gives a map
    $$
        \nabla:\mathcal{H}_{k-2}\to\mathcal{H}_{k-2}\otimes\Omega^1(logC)
    $$ 
    and induces a map 
    $$
        \theta^{k-1}:\omega^{2-k}\to{\omega^k}.
    $$
    Thus, there is an operator on overconvergent modular forms:
    $$
        \theta^{k-1}:S^{\dagger}_{2-k}(\Gamma_1(N))\to S^{\dagger}_k(\Gamma_1(N)).
    $$

\begin{thm}
    The local representation $\sigma_p$ is split if and only if there exists an overconvergent modular form $g\in S^{\dagger}_{2-k}(\Gamma_1(N))$ such that $\tilde{f}:=f(z)-\beta_pf(pz)=\theta^{k-1}(g)$.
\end{thm}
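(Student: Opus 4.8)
The plan is to follow Breuil and Emerton: realize $\sigma_p$ inside a $p$-adic cohomology group attached to $X_1(N)$ and its Kuga--Sato variety, translate the splitting of $\sigma_p$ into the position of the Hodge line inside the attached filtered $\varphi$-module, and detect that position by means of the operator $\theta^{k-1}$. Concretely, I would first realize $\sigma(f)$ as an $f$-isotypic piece of the interior (parabolic) cohomology $H^1_{\et}(X_1(N)_{\overline{\QQ}},\mathcal H_{k-2})$, so that by the de Rham and crystalline comparison theorems of Faltings and Tsuji, applied with the log structure along $C$ and $\tilde C$, the filtered $\varphi$-module $D_p=D_{p,cris}(\sigma_p)$ is identified with the $f$-part of the log-de Rham cohomology $H^1_{dR}(X_1(N),\mathcal H_{k-2})$, carrying: the Hodge filtration from the Hodge-to-de Rham spectral sequence, whose step $\mathrm{Fil}^{k-1}$ is the image of $H^0(X_1(N),\omega^k)=S_k(\Gamma_1(N))$ and so on the $f$-part is the line $\langle f\rangle$ spanned by $f$ seen as a section of $\omega^k$; and the crystalline Frobenius $\varphi$ with characteristic polynomial $X^2-a_p(f)X+p^{k-1}\chi(p)$, whose roots $\alpha_p,\beta_p$ have $v_p(\alpha_p)=k-1$ and $v_p(\beta_p)=0$.

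The next step is a purely $p$-adic-Hodge-theoretic reduction. Since $\sigma_p$ is crystalline with Hodge--Tate weights $\{0,k-1\}$ and reducible, a short analysis of the rank-two filtered $\varphi$-module $D_p$ shows: the Hodge line $\langle f\rangle=\mathrm{Fil}^{k-1}D_p$ cannot be the $\beta_p$-eigenline of $\varphi$ (that would force $t_N(\langle f\rangle)=0<k-1=t_H(\langle f\rangle)$, contradicting weak admissibility); if $\langle f\rangle$ is the $\alpha_p$-eigenline then $D_p$ is the direct sum of its two Frobenius eigenlines as filtered $\varphi$-modules, so $\sigma_p$ is split; and if $\langle f\rangle$ is in general position then $D_p$ is indecomposable and $\sigma_p$ is reducible but non-split. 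Hence the theorem reduces to: there exists $g\in S^\dagger_{2-k}(\Gamma_1(N))$ with $\tilde f=\theta^{k-1}(g)$ if and only if the Hodge line $\langle f\rangle\subset D_p$ is the $\alpha_p$-eigenvector of the crystalline Frobenius.

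To bridge the two sides I would pass to the ordinary locus $X_1(N)^{\mathrm{ord}}$ and use Coleman's theory of overconvergent modular forms. The operator $\theta^{k-1}\colon\omega^{2-k}\to\omega^k$ of the excerpt, built from $\nabla^{k-1}$ together with Katz's unit-root splitting $\mathcal H^1_{dR}|_{X_1(N)^{\mathrm{ord}}}=\omega\oplus U$, has the feature that, because $\varphi$ on $D_p$ is the rigid Frobenius, the solubility in overconvergent forms of $\theta^{k-1}g=h$ for a $U_p$-eigenform $h$ is governed by whether the Frobenius on $D_p$ respects the unit-root structure on the $h$-part — equivalently, whether the Hodge line there is a Frobenius eigenline. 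Applying this to $\tilde f=f(z)-\beta_pf(pz)$, which is the $U_p$-eigenform with eigenvalue $\alpha_p$ (the critical-slope stabilization of $f$), one gets that $\tilde f=\theta^{k-1}(g)$ for some overconvergent $g$ of weight $2-k$ precisely when $\langle f\rangle\subset D_p$ is the $\alpha_p$-eigenline of $\varphi$, which by the previous step is the splitting of $\sigma_p$. Matching $q$-expansions shows that any such $g$ is itself a $U_p$-eigenform of weight $2-k$ with $a_n(g)=n^{1-k}a_n(\tilde f)$ for $p\nmid n$ and $U_p$-eigenvalue $\alpha_pp^{1-k}$ (a $p$-adic unit) — the overconvergent companion form of $f$ — and that its $q$-expansion is $p$-integral precisely because $\tilde f$ is the critical-slope stabilization; the injectivity of $\theta^{k-1}$ on $q$-expansions is what pins down the eigenform structure of $g$, while the construction of $g$ in the split case uses the unit-root subspace $U$.

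The main obstacle is the third step; the classification of filtered $\varphi$-modules in the second step is essentially formal linear algebra. The real content is, on the one hand, to show that the $p$-adic modular form with $q$-expansion $\sum_{n\ge1}n^{1-k}a_n(\tilde f)\,q^n$ genuinely \emph{overconverges} — extends to a strict neighbourhood of $X_1(N)^{\mathrm{ord}}$ — exactly in the split case; this requires controlling its radius of overconvergence via the spectral theory of the compact operator $U_p$ and the local geometry of the eigencurve at this critical-slope point, which is the heart of Coleman's work on classical and overconvergent modular forms. On the other hand, one must carry out the compatibility bookkeeping ensuring that the rigid-cohomological condition above, the log structure and Frobenius at the cusps, and the Faltings--Tsuji comparison with coefficients all line up, so that the cohomological splitting produced by $g$ really is the Galois splitting of $\sigma_p$.
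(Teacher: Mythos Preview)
The paper does not give a proof of this theorem: it is stated in the introduction as the result of Breuil and Emerton \cite{breuil2010representations} that the present paper seeks to generalize to Picard modular surfaces. There is thus no ``paper's own proof'' to compare against; the paper's original arguments all concern the $GU(2,1)$ analogue (Proposition \ref{injectivity}, Theorem \ref{phi-stable}, Corollary \ref{split}).

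That said, your outline is broadly the Breuil--Emerton strategy and is the template the paper follows in the Picard case: realize $D_p$ as the $f$-isotypic piece of log-de Rham cohomology via the comparison theorems, observe that $\mathrm{Fil}^{k-1}D_p$ is the line spanned by $f$, and reduce the splitting of $\sigma_p$ to the $\varphi$-stability of that line. Your second step (the filtered $\varphi$-module classification) is exactly what the paper does in the proof of Theorem \ref{phi-stable}. Where your sketch drifts from Breuil--Emerton is in the third step: you invoke Coleman's spectral theory of $U_p$ and the local geometry of the eigencurve to control overconvergence of $g$, whereas Breuil--Emerton (and this paper, in the $GU(2,1)$ setting) argue more directly by proving that the natural map from $D_p$ into the quotient $S^\dagger_k/\theta^{k-1}(S^\dagger_{2-k})$ is an \emph{injective, $\varphi$-equivariant} map. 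Injectivity is the crux and comes from a vanishing statement for rigid cohomology supported on the non-ordinary locus (trivial for modular curves since the supersingular locus is zero-dimensional; this is where the monodromy computation of Section 4 enters in the Picard case). Once one has that injection, $\varphi$-stability of the Hodge line is literally equivalent to $(\varphi-\alpha_p)\tilde f$ lying in the image of $\theta^{k-1}$, with no appeal to the eigencurve needed.
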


\begin{rmk}
    \begin{enumerate}[label=(\arabic*)]
        \item The level $pN$ modular form $\tilde{f}$ is called the $\alpha_p$-stablization of $f$, which is an eigenform with $U_p$ eigenvalue $\alpha_p$ and with same $T_l$ eigenvalues as $f$.

        \item As $\theta^{k-1}$ commutes with $T_l$, the overconvergent form $g$ has same $T_l,S_l$ eigenvalues as $f$, hence the Galois representation associated with $g$ is again $\sigma(f)$. This $g$ is called a companion form of $f$.
    \end{enumerate}
     
\end{rmk}

	Now we introduce our analog on Picard modular forms and the associated $3$-dimensional representations. Let $E/\QQ$ be an imaginary quadratic field and $p=v\bar{v}$ an odd prime that splits in $E$. Let $\PPP_K$ denote the integral model of the Picard modular surface (i.e. a Shimura varatiy of $GU(2,1)$) over $R_1=\OO_{E,v}$ with a neat level $K$ that is hyperspecial at $p$, and $P_K/E$ (resp. $\overline{P}_K/\FF_p$) denote its generic fiber (resp. special fiber). We first apply the main theorem of \cite{BGG} to compute the dual BGG complex over $P_K$:
	\begin{prop}(Proposition \ref{BGG})
		Assume that $R$ is an $E$-algebra and $(\underline{k})=(k_1,k_2,k_3,k_4)$ is a dominant weight. Then the natural embedding
		\begin{align*}
		&BGG^{\bullet}(\mathcal{F}^{(\underline{k})}_R):=[\omega^{(k_2+1,k_3-2,k_1+1,k_4)}_R
		\to\omega^{(k_1,k_3-2,k_2+2,k_4)}_R \to \omega^{(k_1,k_2,k_3,k_4)}_R ]      \notag\\
		\hookrightarrow &DR^{\bullet}(\mathcal{F}^{(\underline{k})}_R):=[\mathcal{F}^{(\underline{k})}_R \to \mathcal{F}^{(\underline{k})}_R \otimes {\Omega}^{1}_{\PPP_R} \to \mathcal{F}^{(\underline{k})}_R \otimes {\Omega}^{2}_{\PPP_R}]
		\end{align*} is a quasi-isomorphism, and the similar result holds for canonical and subcanonical extensions.
	\end{prop}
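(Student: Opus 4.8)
The plan is to deduce Proposition~\ref{BGG} from the main theorem of \cite{BGG}, the only real work being the explicit Weyl-group combinatorics for $\GG = GU(2,1)$. Recall the shape of that theorem: for any algebraic representation $V^{(\underline{k})}$ of highest weight $(\underline{k})$ it exhibits inside the de Rham complex $DR^{\bullet}(\mathcal{F}^{(\underline{k})}_R)$ a canonical subcomplex --- the dual BGG complex --- whose term in cohomological degree $q$ is $\bigoplus_{w}\W_{w\cdot(\underline{k})}$, the sum running over the Kostant representatives $w$ (the minimal-length representatives of the left $W(\MM,\T)$-cosets in $W(\GG,\T)$) with $\ell(w)=q$; here $\W_{\mu}$ is the automorphic vector bundle attached to the irreducible $\MM$-representation of highest weight $\mu$ and $w\cdot(\underline{k})=w\bigl((\underline{k})+\rho\bigr)-\rho$ is the dot action. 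The theorem says this inclusion is a quasi-isomorphism, and that the same holds after passing to the canonical, resp.\ subcanonical, extension along a toroidal compactification. As $R$ is an $E$-algebra, $P_K\otimes_E R$ is a base change of the generic fibre, so all the hypotheses of \cite{BGG} (smoothness of the model, good toroidal compactifications, characteristic-zero coefficients) hold and it remains only to identify the three terms.

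First I would record the group-theoretic data. Over $E$ one has $\GG_E\simeq GL_3\times\Gm$, the second factor being the similitude, so a dominant weight is $(\underline{k})=(k_1,k_2,k_3,k_4)$ with $k_1\ge k_2\ge k_3$, and $W(\GG,\T)\simeq S_3$ with half-sum of positive roots $\rho=(1,0,-1)$ on the $GL_3$-factor. The Shimura datum of $GU(2,1)$ having signature $(2,1)$, the attached parabolic $\PP$ is a maximal parabolic with Levi $\MM\simeq GL_2\times GL_1\times\Gm$; hence $W(\MM,\T)\simeq S_2$, there are exactly three Kostant representatives $w_0=1$, $w_1$, $w_2$, and they have lengths $0$, $1$, $2$. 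In particular the dual BGG complex has exactly three terms, matching the three-term de Rham complex of the surface $\PPP_R$.

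Then I would compute: evaluate $w_i\cdot(\underline{k})=w_i\bigl((k_1+1,\,k_2,\,k_3-1,\,k_4)\bigr)-(1,0,-1,0)$ for $i=0,1,2$, and re-express each resulting $\MM$-dominant weight via the standard dictionary between irreducible $\MM$-representations and automorphic bundles on the Picard surface; the normalisation is fixed by demanding that the bundle $\omega^{(\underline{k})}_R$ of holomorphic Picard modular forms of weight $(\underline{k})$ be the top-degree term, together with the identifications of $\Omega^1_{\PPP_R}$ and $\Omega^2_{\PPP_R}=\det\Omega^1_{\PPP_R}$ with their own automorphic bundles (this is what produces the shifts $k_i+1$ and $k_3-2$). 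One then reads off, in degrees $0$, $1$, $2$,
\[
\omega^{(k_2+1,\,k_3-2,\,k_1+1,\,k_4)}_R,\qquad \omega^{(k_1,\,k_3-2,\,k_2+2,\,k_4)}_R,\qquad \omega^{(k_1,\,k_2,\,k_3,\,k_4)}_R,
\]
which is exactly the complex $BGG^{\bullet}(\mathcal{F}^{(\underline{k})}_R)$ displayed in the statement; its two differentials are the operators induced on these subbundles by the Gauss--Manin connection (the middle one being the surface analogue of the operator $\theta^{k-1}$ of the Introduction). The quasi-isomorphism with $DR^{\bullet}(\mathcal{F}^{(\underline{k})}_R)$, and its compatibility with the canonical and subcanonical extensions, is then precisely the assertion of \cite{BGG}. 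The step that will need real care --- and the only place where a slip propagates into the answer --- is this last translation: one must fix the Borel of $\MM$, the positive system, and the exact dictionary $\mu\mapsto\omega^{(\cdot)}_R$ (including whether the holomorphic bundle sits at the bottom or the top of the complex, and the contragredient/Serre-duality twist) so that the $\rho$-shift comes out as the stated $k_1+1,\ k_2+2,\ k_3-2$ and the complex is indexed in the right cohomological direction. A sign error there changes all three terms; everything else is formal once the main theorem of \cite{BGG} is quoted.
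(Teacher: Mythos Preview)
Your proposal is correct and follows essentially the same route as the paper: both invoke the main theorem of \cite{BGG} (Theorem~5.9 there), identify the three Kostant representatives, apply the dot action, and translate the resulting $\MM$-dominant weights into the $\omega^{(\underline{k})}$ notation using the identification of $\Omega^1_{\PPP_R}$ with the automorphic bundle attached to $W_{(-1,0,1,0)}$. Note only that the paper's conventions are opposite to the ones you sketch --- dominance reads $k_3\ge k_2\ge k_1$, $\rho=(-1,0,1,0)$, and the Kostant set is written $\{1,\,w_2,\,w_2w_1\}$ --- so your caveat about fixing the dictionary carefully is exactly on point.
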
 
    We refer to section $3$ for the detailed definition of the automorphic bundles that appear in the previous proposition. They can be viewed as analogs of the line bundle $\omega^k$, and their global sections are defined as Picard modular forms. 
    For any weight $(\underline{k})$ that $k_1\leq k_2$, we define the space of Picard modular forms of level $K$ with coefficients in $R$ by
	$$
	M_{(\underline{k})}(K,R):=H^0(\PPP_{K,R}^{tor}, \omega_R^{(\underline{k}),can}),
	$$
    where $\PPP_{K,R}^{tor}$ is the base change of the toroidal compactification of $\PPP_{K}$ to the $E$-algebra $R$.
	The subspace of cusp forms is defined as
	$$
	S_{(\underline{k})}(K,R):=H^0(\PPP_{K,R}^{tor}, \omega_R^{(\underline{k}),sub}).
	$$
	Let $\Theta:\omega^{(k_1,k_3-2,k_2+2,k_4)}_R \to \omega^{(k_1,k_2,k_3,k_4)}_R$ denote the differential operator in $BGG^{\bullet}(\mathcal{F}^{(\underline{k})}_R)$. Then it induces a linear map 
	$$
		\Theta: S^{\dagger}_{(\underline{k})}(K,L) \to S^{\dagger}_{(k_1,k_3-2,k_2+2,k_4)}(K,L)
	$$
	between overconvergent cuspidal forms with coefficients in $L$ for any finite extension $L/\QQ_p$ and weight $(\underline{k})$ that satisfies $k_2+3\le k_3$. The overconvergent forms are the global sections of the automorphic vector bundles on a tube of the ordinary locus, and we refer to Section 5 for details.

	Now we state the results on the monodromy groups we need. Fix a prime $l \ne p$ and a neat level $K=K_lK^l$ that is hyperspecial at $p$ and $K_l\subset \GG(\mathbb{Z}_l)$, where $\GG$ is the reductive group in the Shimura datum. We consider the Newton strata on $\overline{\PPP}_{K,\overline{\FF}_p}$. It coincides with the Ekedahl-Oort strata in our case, and there are three possible Newton polygons. Let $Newt$ be the set of Newton polygons, and for $\xi\in Newt$, let $W_{\xi}$ denote the corresponding Newton stratum. Let $\overline{\PPP}_{K}^{0}$ be a connected component of $\overline{\PPP}_{K,\overline{\FF}_p}$. Following the strategy of \cite{chai2005monodromy} and \cite{2006Monodromy}, we prove 
	
	\begin{thm}(Theorem \ref{connected})
		For any non-basic $\xi \in Newt$, we have $Z_{\xi}=W_{\xi} \cap \overline{\PPP}_{K}^{0}$ is connected, hence irreducible as it is smooth. For the monodromy representation $\rho_l: \pi_1^{\et}(Z_{\xi},\bar{z}) \to \pi_1^{\et}(\overline{Z_{\xi}},\bar{z}) \to \GG(\ZZ_l)$ associated with the $l$-adic Tate module of the universal abelian scheme, the images of both fundamental groups in $\GG(\ZZ_l)$ are $K_{l}\cap \GG'(\mathbb{Q}_l)$, where $\GG'(\mathbb{Q}_l)$ is the derived group of $\textbf{G}(\mathbb{Q}_l)$.
	\end{thm}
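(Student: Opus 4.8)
The proof has two components: (1) connectedness/irreducibility of the non-basic Newton strata, and (2) computation of the monodromy image. For (1), I would follow the Chai–Oort strategy: first establish that the ordinary (generic) Newton stratum is irreducible — typically by a density argument using Hecke correspondences plus the existence of a canonical lift / CM points, or by appealing to the fact that the ordinary locus of a Shimura variety of this type is the complement of an ample divisor in each connected component. Then handle the remaining non-basic stratum (the one where $p$-divisible group splits as $\mu_p^\infty \times (\text{supersingular part})$ corresponding to the single non-trivial, non-basic Newton polygon in the $GU(2,1)$ split case) by a "purity and almost product structure" argument: such a stratum is, up to finite étale cover, fibered over a Shimura variety for a smaller group with fibers that are iterated $\mathbb{G}_a$-bundles (the leaves), and one combines irreducibility of the base with the Hecke-orbit argument on the fibers. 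Smoothness of Newton = Ekedahl–Oort strata here is known, so connected $\Rightarrow$ irreducible is immediate.

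**Monodromy computation.** For (2), the target is to show the image of $\pi_1^{\text{\et}}(Z_\xi,\bar z)$ in $\GG(\ZZ_l)$ is exactly $K_l \cap \GG'(\QQ_l)$. The inclusion "$\subseteq$" is structural: the $l$-adic monodromy of the universal abelian scheme preserves the polarization and the $\OO_E$-action, so it lands in $\GG(\ZZ_l) \cap K_l$; moreover on a Newton stratum the similitude character is locally constant (the cyclotomic character is "already accounted for" by the Galois action on the base field, not the geometric $\pi_1$), forcing the geometric monodromy into the derived group $\GG'(\QQ_l)$. For the reverse inclusion "$\supseteq$" — the substantive content — I would argue that the Zariski closure of the monodromy image is a reductive (even semisimple) subgroup of $\GG'$ by Deligne's semisimplicity and the fact that the local system is pure; then rule out proper reductive subgroups using the action of Hecke operators at $l$ (the Hecke-orbit/"big monodromy" mechanism of Chai), which shows the image is normalized by a large subgroup and hence, combined with the connectedness from (1), must be all of $K_l \cap \GG'(\QQ_l)$. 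A clean way to package this: reduce to showing the mod-$l$ reduction is surjective onto $\GG'(\FF_l)$ (using that $\GG'$ is simply connected so $\GG'(\ZZ_l) \to \GG'(\FF_l)$ has pro-$l$ kernel and a pro-$l$ group argument lifts surjectivity), and establish mod-$l$ surjectivity by exhibiting enough semisimple elements (from CM points / Frobenii on the stratum) together with a transvection or unipotent element coming from the leaf/fiber structure.

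**Main obstacle.** The hardest step is the lower bound on monodromy — specifically, producing the unipotent elements in the image that distinguish $\GG'$ from its proper reductive subgroups (e.g. tori, or $\SU(2,1)$-type subgroups of smaller rank). On the ordinary stratum one can sometimes use the Kuga–Sato fiber structure or degeneration at the boundary, but on the non-ordinary stratum $Z_\xi$ the relevant input is the $p$-adic geometry (the $\mathbb{G}_a$-leaves), and transporting that into $l$-adic monodromy requires the Hecke-orbit argument in an essential way. I expect to spend the bulk of the proof setting up the almost-product structure of $Z_\xi$ and invoking (a special case of) van Hoften's / Chai's results to drive the induction, being careful that $\GG'$ here is $\SU(2,1)$ which is simply connected, so the pro-$l$ lifting and "no small normal subgroups" arguments apply cleanly. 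A secondary technical point is tracking the precise level structure at $l$ to land on $K_l \cap \GG'(\QQ_l)$ rather than merely a finite-index subgroup — this follows once mod-$l$ surjectivity and the pro-$l$ lifting are in hand, provided $K_l$ is chosen (as stated) to be compatible with $\GG'$.
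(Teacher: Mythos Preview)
Your plan diverges substantially from the paper's, and the order of logic is inverted. The paper does \emph{not} prove connectedness first and then feed it into the monodromy computation; both come out simultaneously from a single key lemma (Lemma~\ref{key lemma}) in the style of Chai. The inputs to that lemma are (a) transitivity of the $\GG'(\QQ_l)$-Hecke action on $\pi_0(Z_\xi)$, and (b) infiniteness of the monodromy image. For (a) the paper goes \emph{down} to the basic locus rather than up from the ordinary one: the Rapoport--Zink-type uniformization of the basic stratum as $I(\QQ)\backslash I(\mathbb{A}_f)/I_p^e K^p$ together with strong approximation for the simply connected group $I'$ gives transitivity there, and this propagates upward to every $\pi_0(Z_\xi)$ via Hecke-equivariant surjections $\pi_0(Z_\zeta)\to\pi_0(Z_\xi)$ built from the closure relations and strata Hasse invariants (Proposition~\ref{surjectivity}). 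For (b), finiteness of the monodromy would, by Oort's result (Lemma~\ref{oort}), make the universal abelian scheme isotrivial on a finite cover, forcing $\overline{Z_\xi}$ into a single Newton stratum and contradicting the surjectivity just established. The key lemma then runs: if $Q$ is the stabilizer of a component in the $l$-adic tower, then $Q\cap K'_{0,l}=\mathrm{Im}\,\rho_l$; transitivity makes $\GG'(\QQ_l)/Q$ profinite, hence compact, so the normalizer of $H=(\overline{\mathrm{Im}\,\rho_l})^0$ is parabolic; but that normalizer is also reductive (Lemma~\ref{alggp}), hence equals $\GG'_{\QQ_l}$; thus $H=\GG'_{\QQ_l}$, and a root-subgroup divisibility argument upgrades $\mathrm{Im}\,\rho_l$ from open to all of $K'_{0,l}$. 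No almost-product or foliation structure is invoked anywhere.

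Your alternative route to connectedness (ample complement for the ordinary locus, Oort foliation for the middle stratum) is plausible, but your monodromy lower bound has a real gap. You propose to obtain an $l$-adic unipotent or transvection ``from the leaf/fiber structure,'' yet the leaves encode the $p$-divisible group and carry no direct $l$-adic content; there is no mechanism for turning a $\mathbb{G}_a$-fibration in characteristic $p$ into nontrivial $l$-adic unipotent monodromy. Likewise, Frobenii at CM points on a fixed Newton stratum do not by themselves generate $\GG'(\FF_l)$ without exactly the kind of group-theoretic amplification you are trying to avoid. The paper's parabolic-versus-reductive dichotomy at the $\QQ_l$-level sidesteps the need for any explicit elements or passage through $\GG'(\FF_l)$; this is precisely what the Chai machinery buys, and it is the step you are missing.
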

    \begin{rmk}
        This is not a new result, as Pol van Hoften has proved the connectedness of the Kottwitz–Rapoport strata and studied the monodromy groups of Hecke-equivariant subvarieties in Shimura varieties of Hodge type with parahoric level, see \cite{van2024mod} and \cite{van2024ordinary}. We include a proof in our specific case in section 4 to ensure self-containment.
    \end{rmk}

	Applying the above theorem, we can compute some cohomology groups and consider a splitting property of the Galois representation associated with a cuspidal form. Assume that $L/\QQ_p$ is a finite extension and $f\in S_{(\underline{k})}(K, L)$ is a cuspidal eigen-newform (i.e. $f$ is an eigenform and $\dim (\pi^{\infty})^{K}=1$, where $\pi(f)=\pi^{\infty}\otimes\pi_{\infty}$ is the associated cuspidal automorphic representation). Let $\mathfrak{m}$ denote the maximal ideal $\mathscr{H}(K^p,L)$ associated with the Hecke eigensystem of $f$. Assume that the associated Galois representation $\sigma(f)=(H^2_{c,\et}(P_{K,\overline{E}}, \mathscr{L}^{(\underline{k})})\otimes_{\QQ_p}L)[\mathfrak{m}]$ is irreducible of dimension 3. We write $\sigma_p=\sigma(f)|_{{\rm Gal}(\overline{\QQ}_p/\QQ_p)}$ and $D_p=D_{cris}(\sigma_p)$. We further assume that the $\varphi$-operator of $D_p$ has three different eigenvalues $\alpha_1,\alpha_2,\alpha_3 \in L$  (It makes sense as $\varphi$ is linear in this case) such that $v_p(\alpha_1)\le v_p(\alpha_2)\le v_p(\alpha_3)$. A weight $(\underline{k})$ is called \textit{cohomological} if $k_2+3\leq k_3$ and \textit {good} if $k_2+3\leq k_3\leq 2$ and $k_4=0$. 
	
	\begin{prop}(Proposition \ref{injectivity})
		For any good weight $(\underline{k})$ and finite extension $L$ over $\QQ_p$, we have the commutative diagram
		$$
		\begin{tikzcd}	 	
		S_{(\underline{k})}(K,L)[\m]  \arrow[d,hook] \arrow[r,hook]
		&  \dfrac{S^{\dagger}_{(\underline{k})}(K,L)[\m]}{\Theta(S^{\dagger}_{(k_1,k_3-2,k_2+2,0)}(K,L)[\m])} 
		\\
		\mathbb{H}^2(P^{tor}_{K,L}, DR^{\bullet}(\mathcal{F}_L^{(\underline{k})})^{sub})[\m] \arrow[ru,hook,"i"]
		&  
		\end{tikzcd}.
		$$ The inclusion $i$ is $\varphi$-equivariant, where the $\varphi$ actions come from the Frobenius operators on rigid cohomology groups.
	\end{prop}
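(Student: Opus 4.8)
The plan is to factor the horizontal arrow through $\mathbb{H}^2(P^{tor}_{K,L},DR^{\bullet}(\mathcal{F}_L^{(\underline{k})})^{sub})[\m]$ by realizing classical cusp forms as the bottom step of the Hodge filtration on this hypercohomology, identifying the overconvergent quotient with the rigid cohomology of the ordinary locus, and taking for $i$ the natural restriction map. We use throughout that $\OO_{E,v}=\ZZ_p$ (as $p$ splits in $E$), so that $\PPP^{tor}_{K}$ is smooth and proper over $\ZZ_p$ and, by Berthelot's comparison, $\mathbb{H}^2(P^{tor}_{K,L},DR^{\bullet,sub})$ is the rigid cohomology of the special fiber $\overline{\PPP}^{tor}_{K}$ with coefficients in the $F$-isocrystal attached to $\mathcal{F}^{(\underline{k})}$; localization at $\m$ is exact. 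For the vertical arrow: by Proposition \ref{BGG} the inclusion $BGG^{\bullet}(\mathcal{F}^{(\underline{k})}_L)^{sub}\hookrightarrow DR^{\bullet}(\mathcal{F}^{(\underline{k})}_L)^{sub}$ is a filtered quasi-isomorphism, so $\mathbb{H}^2(P^{tor}_{K,L},DR^{\bullet,sub})\cong\mathbb{H}^2(P^{tor}_{K,L},BGG^{\bullet,sub})$ compatibly with the stupid filtration of the three-term complex $BGG^{\bullet,sub}$; the resulting spectral sequence $E_1^{p,q}=H^q(P^{tor}_{K,L},\omega^{\mu_p,sub})\Rightarrow\mathbb{H}^{p+q}$, with $\mu_2=(\underline{k})$, is the Hodge--de Rham spectral sequence and degenerates at $E_1$ (cf.\ Faltings--Chai, Lan). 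Since $BGG^{\bullet,sub}$ is concentrated in degrees $0,1,2$, the brutal-truncation inclusion of $\omega^{(\underline{k}),sub}$ (in degree $2$) induces $S_{(\underline{k})}(K,L)=H^0(P^{tor}_{K,L},\omega^{(\underline{k}),sub})\to\mathbb{H}^2(P^{tor}_{K,L},DR^{\bullet,sub})$, which is injective by $E_1$-degeneration (it is the bottom filtration step); localizing at $\m$ gives the vertical arrow.

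For the overconvergent side, let $\mathcal{U}$ denote the tube of the ordinary locus together with its strict neighborhoods; these are quasi-Stein, so coherent cohomology vanishes in positive degrees, and applying Proposition \ref{BGG} over $\mathcal{U}$ makes the BGG spectral sequence collapse to $\mathbb{H}^2(\mathcal{U},DR^{\bullet,sub})=\coker(\Theta\colon H^0(\mathcal{U},\omega^{(k_1,k_3-2,k_2+2,0),sub})\to H^0(\mathcal{U},\omega^{(\underline{k}),sub}))$, which, by the description of overconvergent forms in Section 5, is $S^{\dagger}_{(\underline{k})}(K,L)/\Theta(S^{\dagger}_{(k_1,k_3-2,k_2+2,0)}(K,L))$; by Berthelot's comparison this is the rigid cohomology of the ordinary locus $\overline{\PPP}^{tor,ord}_{K}$. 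We let $i$ be the $\m$-localization of the restriction map $\mathbb{H}^2(P^{tor}_{K,L},DR^{\bullet,sub})\to\mathbb{H}^2(\mathcal{U},DR^{\bullet,sub})$; since restriction is functorial in rigid cohomology and the ordinary locus is stable under Frobenius, $i$ is $\varphi$-equivariant. The triangle commutes: the class in the bottom Hodge step of a classical cusp form $s$ is represented by the degree-$2$ cocycle $s\in\omega^{(\underline{k}),sub}$ of $BGG^{\bullet,sub}$, and restricting this cocycle to $\mathcal{U}$ and reading it modulo the image of $\Theta$ gives exactly the image of $s$ under the horizontal arrow.

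It remains to prove $i$ injective, which is the main obstacle. By Berthelot's excision triangle for the closed complement $Z=\overline{\PPP}^{tor}_{K}\setminus\overline{\PPP}^{tor,ord}_{K}$, the kernel of $i$ is a quotient of the local cohomology $\mathbb{H}^2_{Z}(\overline{\PPP}^{tor}_{K},DR^{\bullet,sub}(\mathcal{F}^{(\underline{k})}))$, so it suffices to show this group vanishes after localizing at $\m$. Stratify $Z$ by dimension: its $1$-dimensional part is the (interior) almost-ordinary Newton stratum $Z_1$, while the remainder --- the basic stratum together with finitely many boundary points --- has dimension $0$, hence codimension $2$, and so contributes nothing to $\mathbb{H}^2_{Z}$ by the Gysin purity isomorphism; thus $\mathbb{H}^2_{Z}(\cdots)\cong\mathbb{H}^2_{Z_1}(\cdots)$. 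Purity for the smooth, codimension-$1$, locally closed $Z_1$ identifies this with a Tate twist of $H^0_{dR}(Z_1,\mathcal{F}^{(\underline{k})}|_{Z_1})$, the module of flat sections of the flat bundle $\mathcal{F}^{(\underline{k})}$ along $Z_1$. By Theorem \ref{connected} each connected component of $Z_1$ is irreducible with monodromy image Zariski-dense in the derived group $\GG'$ --- a statement for the $\ell$-adic Tate module, hence also for the de Rham realization $\mathcal{F}^{(\underline{k})}$, whose algebraic monodromy has the same identity component; since the cohomological condition $k_2+3\le k_3$ makes $(\underline{k})$ a non-trivial highest weight for $\GG'$, the representation $\mathcal{F}^{(\underline{k})}$ has no $\GG'$-invariants, so these flat sections vanish. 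Hence $\mathbb{H}^2_{Z}(\cdots)[\m]=0$, the map $i$ is injective, and the horizontal arrow, being the composite of $i$ with the injective vertical arrow, is injective as well. The crux is exactly this vanishing of $\mathbb{H}^2_{Z}[\m]$: it is where Theorem \ref{connected} is indispensable, and it further relies on the description of the Newton stratification of $\overline{\PPP}^{tor}_{K}$ --- one $1$-dimensional non-ordinary stratum and the rest of dimension $0$ --- and on the passage from the $\pi_1^{\et}$-monodromy statement to the de Rham flat-sections computation; these are the points calling for care rather than mere calculation.
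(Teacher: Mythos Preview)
Your overall strategy---realize $S_{(\underline k)}[\m]$ as $\mathrm{Fil}^{2}$ of the hypercohomology, identify the overconvergent quotient with $\mathbb{H}^2$ of $j^{\dagger}DR^{\bullet,sub}$, take $i$ to be restriction, and kill its kernel via Gysin and Theorem~\ref{connected}---is the paper's strategy as well. But two steps in your execution are genuine gaps, and one differs from the paper in a way you should be aware of.

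\textbf{The main gap} is the sentence ``a statement for the $\ell$-adic Tate module, hence also for the de~Rham realization \dots\ whose algebraic monodromy has the same identity component.'' Theorem~\ref{connected} computes the image of $\pi_1^{\et}(Z_{\xi})\to\GG(\ZZ_l)$; you are using this to conclude that the overconvergent $F$-isocrystal $\mathcal D^{(\underline k)}|_{Y}$ has no horizontal sections. That transfer from $\ell$-adic to crystalline/rigid monodromy over a positive-characteristic base is not formal and is not proved anywhere in the paper. The paper avoids this entirely: it proves a rigid--\'etale comparison of Euler characteristics for $Y$ (via Lefschetz trace formulas \`a la Mieda), uses purity of weights to isolate $H^0$, and thereby deduces $H^{0}_{rig}(Y,\mathcal D^{(\underline k)})[\m]\cong H^{0}_{\et}(Y_{\overline{\FF}_p},\mathcal L^{(\underline k)})[\m]$ as Hecke modules. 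Only then does Theorem~\ref{connected} enter, on the \'etale side. Your shortcut needs either this comparison or an independent $p$-adic monodromy argument.

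\textbf{A second gap}: the claim that the cohomological condition $k_2+3\le k_3$ forces $\mathcal F^{(\underline k)}$ to be a nontrivial $\GG'$-module is false at the boundary $k_1=k_2$, $k_3=k_2+3$, where the highest weight $(k_1+1,k_2+1,k_3-2,0)=(k_1+1,k_1+1,k_1+1,0)$ restricts trivially to $\GG'\cong SL_3$. In that case $H^0$ is a sum of one-dimensional pieces indexed by $\pi_0(Y)$, and you must use the localization at $\m$ together with the assumed irreducibility of $\sigma(f)$ (a nonzero class there would force $\sigma(f)$ to contain a character). The paper makes exactly this move.

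\textbf{A technical point} where your route diverges: you identify $\mathbb H^{2}(P^{tor}_{K,L},DR^{\bullet,sub})$ with rigid cohomology of the proper special fiber and then run excision on $\overline\PPP^{tor}_K$. In fact the $sub$ hypercohomology is $H^{2}_{c,rig}(\overline\PPP_K,\mathcal D^{(\underline k)})$, not $H^{2}_{rig}(\overline\PPP^{tor}_K,\ldots)$, so Berthelot's excision triangle and Gysin do not apply to it directly. The paper therefore first passes $sub\hookrightarrow can$ at $[\m]$ (this uses the irreducibility of $\sigma(f)$, via the $p$-adic comparison with \'etale cohomology), identifies the $can$ side with $H^{2}_{rig}(\overline\PPP_K)$, and runs excision/Gysin on the open $\overline\PPP_K$ for the smooth closed $Y$ of pure codimension~$1$. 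Your direct approach can be salvaged once you observe that the non-ordinary locus is disjoint from the boundary $D$ (so near $Y$ one has $sub=can$), but you neither state nor verify this, and instead allow ``finitely many boundary points'' in $Z$, where the $sub$ twist would actually matter.
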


	Twisting central characters, it is obvious that the previous proposition holds for arbitrary cohomological weight $(\underline{k})$. The vertical arrow on the left is exactly the inclusion $Fil^{k_3}D_p\subset D_p$. Therefore, for a good weight $(\underline{k})$, we have
	\begin{thm}(Theorem \ref{phi-stable})
		$Fil^{k_3}D_p$ is $\varphi$-stable if and only if $\varphi f-\alpha_3 f= \Theta(g)$ for some cuspidal overconvergent form $g\in S^{\dagger}_{(k_1,k_3-2,k_2+2,0)}(K,L)[\m]$, moreover,
		$$
		k_1+1 \leq v_p(\alpha_1) \leq v_p(\alpha_2)\leq k_2+2<v_p(\alpha_3)=k_3,
		$$
		hence $D_3$ and $D_{12}$ are admissible. Here we view $f$ as an overconvergent form naturally, so $\varphi f$ makes sense.
	\end{thm}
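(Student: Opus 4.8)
The plan is to deduce Theorem \ref{phi-stable} from Proposition \ref{injectivity} together with the weak admissibility of $D_p$. First I would unwind what the left vertical arrow in the diagram of Proposition \ref{injectivity} means Hodge-theoretically. By the degeneration of the Hodge--de Rham spectral sequence (in the log/sub setting, via the BGG comparison of Proposition \ref{BGG}) and the compatibility of rigid cohomology with crystalline comparison, the edge map $S_{(\underline{k})}(K,L)[\m] \hookrightarrow \mathbb{H}^2(P^{tor}_{K,L}, DR^{\bullet}(\mathcal{F}_L^{(\underline{k})})^{sub})[\m]$ is identified with the inclusion of the top Hodge filtration piece $Fil^{k_3} D_p \subset D_p$; this is the standard dictionary ``holomorphic forms $=$ top piece of the Hodge filtration'' applied to the $3$-dimensional $\m$-isotypic summand. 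I would state this identification as a lemma, citing the degeneration and the crystalline-comparison for $\sigma(f)$ that is already assumed in the setup.

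Next, having the $\varphi$-equivariant injection $i$, one observes that $Fil^{k_3}D_p$ is one-dimensional (since $\dim \sigma_p = 3$ and the Hodge--Tate weights are $k_1+1 \le k_2+2 < k_3$, distinct) and that the target of $i$ modulo $\Theta$-image is where $\varphi$ acts. So $Fil^{k_3}D_p$ is $\varphi$-stable if and only if $\varphi f$ and $f$ have the same image in the quotient $S^{\dagger}_{(\underline{k})}(K,L)[\m] / \Theta(S^{\dagger}_{(k_1,k_3-2,k_2+2,0)}(K,L)[\m])$, i.e. $\varphi f - \alpha_3 f \in \Theta(S^{\dagger}_{(k_1,k_3-2,k_2+2,0)}(K,L)[\m])$, where $\alpha_3$ must be the eigenvalue of $\varphi$ on the line $Fil^{k_3}D_p$. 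Here I would pause to justify why, when $Fil^{k_3}D_p$ is $\varphi$-stable, the relevant eigenvalue is precisely the $\alpha_3$ with maximal valuation: this follows from weak admissibility, since a $\varphi$-stable line $D'$ inherits a weakly admissible sub-object structure and its single Hodge--Tate weight is $k_3$, forcing $v_p$ of its $\varphi$-eigenvalue to equal $k_3$, which is the largest Newton slope by the Newton-above-Hodge inequality applied to the full $D_p$.

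For the valuation inequalities $k_1+1 \le v_p(\alpha_1)\le v_p(\alpha_2)\le k_2+2 < v_p(\alpha_3)=k_3$, I would argue as follows. Weak admissibility of $D_p$ gives $v_p(\alpha_1)+v_p(\alpha_2)+v_p(\alpha_3) = (k_1+1)+(k_2+2)+k_3$ (equality of total slope and total Hodge weight), and $v_p(\alpha_1) \ge k_1+1$ (the smallest slope is at least the smallest Hodge--Tate weight, again Newton-above-Hodge). Combined with $v_p(\alpha_3) = k_3$ from the previous paragraph and $v_p(\alpha_1)\le v_p(\alpha_2)$, subtracting yields $v_p(\alpha_1)+v_p(\alpha_2) = k_1+1+k_2+2$, whence $v_p(\alpha_2) \le k_2+2$ and symmetrically $v_p(\alpha_1) \ge \dots$; the chain of inequalities then closes. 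Admissibility of $D_3 := Fil^{k_3}D_p$ (as a rank-one weakly admissible $\varphi$-module, automatically admissible) and of the quotient/complement $D_{12}$ (rank two, with Hodge weights $k_1+1, k_2+2$ and slopes $v_p(\alpha_1), v_p(\alpha_2)$ summing correctly, and the sub-line condition checked by the inequality $v_p(\alpha_1)\ge k_1+1$) then follows from weak admissibility being inherited by sub and quotient objects.

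The main obstacle I anticipate is the first step: pinning down precisely that the geometric/rigid-cohomological $\varphi$ on $\mathbb{H}^2(P^{tor}_{K,L}, DR^{\bullet})^{sub}[\m]$ matches the crystalline Frobenius on $D_p = D_{cris}(\sigma_p)$ under $i$, and that the left vertical map is genuinely $Fil^{k_3}$ and not merely \emph{contained in} it --- this requires knowing the Hodge filtration on the $3$-dimensional piece has jumps exactly at $k_1+1, k_2+2, k_3$, which in turn uses the BGG description of the Hodge graded pieces (Proposition \ref{BGG}) together with the fact that only the $\omega^{(k_1,k_2,k_3,k_4)}$-term contributes holomorphic forms. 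I would handle this by combining Proposition \ref{BGG} with the $p$-adic comparison isomorphism (Faltings / Tsuji, or the rigid-crystalline comparison of Chiarellotto--Le Stum in the log setting) applied $\m$-isotypically, and by invoking that $\sigma_p$ is de Rham with the stated Hodge--Tate weights, which is part of the known properties of Galois representations attached to Picard modular forms of cohomological weight.
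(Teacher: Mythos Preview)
Your proposal is correct and follows essentially the same route as the paper: identify $S_{(\underline{k})}(K,L)[\m] \hookrightarrow D_p$ with $Fil^{k_3}D_p \subset D_p$ via the Hodge filtration computation, then use the $\varphi$-equivariant injection $i$ of Proposition~\ref{injectivity} to reduce $\varphi$-stability of $Fil^{k_3}D_p$ to the vanishing of $\varphi f - \alpha_3 f$ in the overconvergent quotient, with the identification of the eigenvalue and the slope inequalities coming from weak admissibility applied to $D_1$, $D_{12}$, and $D'=Fil^{k_3}D_p$. One wording caution: a $\varphi$-stable line in a weakly admissible module does not automatically ``inherit a weakly admissible sub-object structure'' --- weak admissibility of $D_p$ only yields $k_3 = t_H(D') \le t_N(D')$, and you need the complementary bound $t_N(D') \le v_p(\alpha_3) \le k_3$ (from the total-slope equality together with $t_H(D_{12}) \le t_N(D_{12})$) to force $v_p(\alpha_3)=k_3$ and $D'=D_3$; this is exactly how the paper argues and is implicit in your next paragraph.
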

	Then we have the following corollary for cohomological weights $(\underline{k})$ immediately. This is our analog of Breuil--Emerton's theorem.
	\begin{thm}(Corollary \ref{split})
		Assume that $f\in S_{(\underline{k})}(K,L)$ is of cohomological weight $(\underline{k})$. Then the crystalline Galois representation $\sigma_p$ whose Hodge-Tate weight is $(k_1-k_4+1,k_2-k_4+2,k_3-k_4 )$ splits as $\sigma_p=\sigma_{p,12}\oplus\sigma_{p,3}$, whose Hodge-Tate weights are $(k_1-k_4+1,k_2-k_4+2)$ and $(k_3-k_4)$ respectively, if and only if $(\varphi-\alpha_3)f=\Theta(g)$ for some $g\in S^{\dagger}_{(k_1,k_3-2,k_2+2,k_4)}(K,L)[\m]$. In particular, $g$ is a companion form of $f$.
	\end{thm}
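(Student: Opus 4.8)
The plan is to read this off from Theorem \ref{phi-stable} together with the Colmez--Fontaine equivalence; the one extra ingredient is that $\varphi$-stability of a single filtration step of $D_p$ forces the whole filtered $\varphi$-module to split, and for this the standing hypothesis that $\varphi$ has three distinct eigenvalues on $D_p$ is exactly what is needed.

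First, as the paper already observes (by twisting the central character, which leaves the inequality $k_2+3\le k_3$ intact while moving $(k_3,k_4)$ into the range $k_3\le 2$, $k_4=0$), Theorem \ref{phi-stable} and Proposition \ref{injectivity} hold for an arbitrary cohomological weight once one also twists $\sigma_p$ by the corresponding crystalline character. So I take as input: $(\varphi-\alpha_3)f=\Theta(g)$ for some $g\in S^{\dagger}_{(k_1,k_3-2,k_2+2,k_4)}(K,L)[\m]$ if and only if the top filtration step $Fil^{k_3-k_4}D_p$ is $\varphi$-stable, and that when this holds one has $k_1-k_4+1\le v_p(\alpha_1)\le v_p(\alpha_2)\le k_2-k_4+2<v_p(\alpha_3)=k_3-k_4$, with the associated rank-one and rank-two filtered $\varphi$-modules admissible.

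Next I would upgrade ``$\varphi$-stable step'' to ``direct-sum decomposition''. Since $\varphi$ is semisimple on $D_p$ with the three distinct eigenvalues $\alpha_i\in L$, we have $D_p=D^{(1)}\oplus D^{(2)}\oplus D^{(3)}$ with $D^{(i)}$ the $\alpha_i$-eigenline, and any $\varphi$-stable subspace is a partial sum of the $D^{(i)}$. Hence the $\varphi$-stable line $Fil^{k_3-k_4}D_p$ is one of the $D^{(i)}$; being an admissible rank-one filtered $\varphi$-module whose only filtration jump is $k_3-k_4$, its slope equals $k_3-k_4$, so by the slope inequalities above it must be $D^{(3)}$, which I call $D_3$. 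Put $D_{12}:=D^{(1)}\oplus D^{(2)}$ with the subspace filtration. A dimension count on the jumps of $Fil^{\bullet}D_p$, which lie at $k_1-k_4+1<k_2-k_4+2<k_3-k_4$ (distinct because $k_1\le k_2$), using that $D_3\subseteq Fil^nD_p$ and $D_{12}+Fil^nD_p=D_p$ for $n\le k_3-k_4$, shows $Fil^nD_p=(Fil^nD_p\cap D_{12})\oplus(Fil^nD_p\cap D_3)$ for every $n$; hence $D_p=D_{12}\oplus D_3$ as filtered $\varphi$-modules, with both summands admissible. Applying Colmez--Fontaine, this decomposition corresponds to $\sigma_p=\sigma_{p,12}\oplus\sigma_{p,3}$ with $D_{cris}(\sigma_{p,12})=D_{12}$ and $D_{cris}(\sigma_{p,3})=D_3$, and the Hodge--Tate weights $(k_1-k_4+1,k_2-k_4+2)$ and $(k_3-k_4)$ are read off from the filtration jumps. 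Conversely, given such a splitting with $\sigma_{p,3}$ of Hodge--Tate weight $k_3-k_4$, both Hodge--Tate weights of $\sigma_{p,12}$ are $<k_3-k_4$, so $Fil^{k_3-k_4}D_{cris}(\sigma_{p,12})=0$ and therefore $Fil^{k_3-k_4}D_p=D_{cris}(\sigma_{p,3})$, a $\varphi$-submodule and in particular $\varphi$-stable; Theorem \ref{phi-stable} then returns $(\varphi-\alpha_3)f=\Theta(g)$ for some $g\in S^{\dagger}_{(k_1,k_3-2,k_2+2,k_4)}(K,L)[\m]$. Finally, $g$ is a companion form of $f$ because $\Theta$ is induced by the BGG differential, which is equivariant for the prime-to-$p$ Hecke action, and $g$ lies in the $\m$-eigenspace; so $g$ has the same away-from-$p$ Hecke eigensystem, hence the same attached Galois representation $\sigma(f)$, as $f$.

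The only genuinely delicate point is the middle step: $\varphi$-stability of the lone filtration step $Fil^{k_3-k_4}D_p$ does not on its own make it a direct summand as a filtered $\varphi$-module, and it is precisely the distinctness of $\alpha_1,\alpha_2,\alpha_3$ together with the slope inequalities furnished by Theorem \ref{phi-stable} that pin $Fil^{k_3-k_4}D_p$ down as the $\alpha_3$-eigenline and make the filtration split compatibly. The twisting reduction to good weight behind the first step is routine, but should be spelled out carefully enough that ``good weight'' is seen to lose no generality.
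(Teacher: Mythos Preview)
Your proposal is correct and follows the same approach as the paper: reduce to good weight by twisting the central character, invoke Theorem \ref{phi-stable}, and use the Colmez--Fontaine equivalence to pass between splittings of the filtered $\varphi$-module and of $\sigma_p$. The paper's own proof of Corollary \ref{split} is the single sentence ``twisting by a central character, we immediately have the following corollary,'' so the identification $Fil^{k_3}D_p=D_3$ and the admissibility of $D_3$, $D_{12}$ are already established inside the proof of Theorem \ref{phi-stable}; your discussion of the filtration compatibility $Fil^nD_p=(Fil^nD_p\cap D_{12})\oplus(Fil^nD_p\cap D_3)$ and the converse direction simply makes explicit what the paper leaves implicit.
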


\vspace{0.3cm}

\noindent\textbf{Acknowledgements}. The author thanks Yiwen Ding and Liang Xiao for asking me to consider this problem. The author also thanks Ruiqi Bai, Tian Qiu, and Deding Yang for helpful discussions.

\section{Picard modular surface}

	In this section, we review the definition of the Picard modular surface and its moduli problem and do careful linear algebra preparations to satisfy \cite[Condition 2.5]{BGG} because we want to apply the results about the dual BGG complex due to K.-W. Lan and P. Polo in \cite{BGG}.
	
	\begin{notation}
	We assume that 
	\begin{itemize}
		\item $E$ is a imaginary quadratic field over $\QQ$, and $\bar{\cdot}$ is the complex conjugate. \item $\iota:E \to \CC$ is an embedding and $\bar{\iota}$ is the complex conjugate of $\iota$. We view $E$ as a subfield of $\CC$ via $\iota$.
		\item $d_E$ is a square free integer such that $E=\QQ[\sqrt{d_E}]$, and $D_E$ is the discriminant of $E$. 
		\item $\delta$ is a square root of $D_E$ such that  $\iota(\delta)$ has positive imaginary part. ${\rm Im}_{\delta}(a)=(a-\bar{a})/\delta$.
		\item $p$ is an odd prime number. Assume that $p=v\bar{v}$ is split in $E$. Further $p \nmid d_E$.
		\item $R_1=\OO_{E,v}$, the localization of $\OO_E$ at $v$, it is our base ring.
	\end{itemize}
	\end{notation}

	Let $(V,(\cdot,\cdot))$ be a hermitian space, $V=E^3$ with standard basis $e_1,e_2,e_3$, the hermitian pairing $(\cdot,\cdot): V \times V \to E$ is given by
	$$
	(x,y)=\bar{x}^{t} \left(
	  \begin{array}{ccc}
		 1 &   &   \\ 
		   & 1 &   \\ 
		   &   & -1 
	\end{array} \right) y.
	$$
	We consider the alternating pairing $\left\langle \cdot , \cdot \right\rangle: V \times V \to \QQ$, given by $\left\langle x,y \right\rangle ={\rm Im}_\delta(x,y)$. Then we have the formulae
	$$
	\left\langle ax,y\right\rangle =\left\langle x,\bar{a}y \right\rangle, \quad
		2(u,v)=\left\langle u,\delta v\right\rangle + \delta\left\langle u,v \right\rangle .
	$$
	We fix a lattice $L=Span_{\OO_E}\{e_1,e_2,e_3\}$ in $V$. It is a self-dual lattice in the sense that
	$$
		L=\left\lbrace x\in V \mid \left\langle x,y \right\rangle \in \ZZ , \forall y \in L \right\rbrace.
	$$ 
	Equivalently, $L$ is the $\OO_E$-dual of itself with respect to the hermitian pairing $(,)$.

	We have viewed $E$ as a subfield of $\CC$, so we can identify $\OO_E \otimes_{\ZZ} \RR$ and $\CC$, and then identify $L_{\RR}:=L \otimes_{\ZZ} \RR=V_{\RR}$ and $\CC^3$. Let the $\RR$-homomorphism $h_0$ be
	\begin{align*}
		h_0:\CC & \to {\rm M}_3(\CC) = {\rm End}_{\OO_E \otimes_{\ZZ} \RR}(L_{\RR}) \\
		z & \mapsto \begin{pmatrix}
		z &  &  \\ 
		& z &  \\ 
		&  & \bar{z}
		\end{pmatrix} .
	\end{align*}
	Then it is not difficult to check that $(\OO_E,\bar{\cdot},L,\left\langle \cdot,\cdot \right\rangle ,h_0)$ is an \emph{integral} PEL-datum, in the sense of \cite{BGG}. It is an integral version of the data $(E, \bar{\cdot}, V, \left\langle \cdot,\cdot \right\rangle ,h_0))$.
	
	The homomorphism $h_0$ defines a Hodge structure on $L$. More precisely, it defines an action of $(\CC\otimes_{\RR}\CC)^{\times}=\CC^{\times} \times \CC^{\times}$ on $L_{\CC}:=L_{\RR}\otimes_{\RR}\CC$, where $z_1\otimes z_2$ acts by $h_0(z_1)\otimes z_2$. Then we have the \textit{Hodge decomposition} $L_\CC=V_0 \oplus \overline{V_0}$, such that $h_0(z)\otimes 1$ acts by $1\otimes z$ on $V_0$ and by $1\otimes \bar{z}$ on $\overline{V_0}$. The complex conjugation here sends $v\otimes z$ to $v\otimes \bar{z}$.
	
	It is not difficult to check that $V_0$ is the maximal totally isotropic subspace under the pairing $\left\langle ,\right\rangle $. Then the pairing induces an isomorphism 
	\begin{align}\label{iso_sp}
		\overline{V_0}\cong V_0^{\vee}(1):={\rm Hom}_{\CC}(V_0,\CC)(1).
	\end{align}
	There is a canonical pairing $\left\langle , \right\rangle_{can}$ on $V_0 \oplus V_0^{\vee}(1)$, which is defined by
	\begin{align*}
		\left\langle (x_1,f_1) , (x_2,f_2) \right\rangle_{can}=f_2(x_1)-f_1(x_2),
	\end{align*}
	and the isomorphism (\ref{iso_sp}) induces
	\begin{align} \label{iso_pair}
	(V_0\oplus\overline{V_0},\left\langle , \right\rangle ) \cong (V_0 \oplus V_0^{\vee}(1) , \left\langle , \right\rangle_{can}).
	\end{align}
	
	We consider the free $\OO_{E,(p)}$ module $L\otimes_{\ZZ}\OO_{E,(p)}$ in $L_\CC$, and we can check that  $L\otimes_{\ZZ}\OO_{E,(p)}=L_0\oplus \overline{L_0}$, where $L_0\subset V_0$ is the free $\OO_E\otimes_{\ZZ}\OO_{E,(p)}$ module 
	\begin{align*}
		L_0&=Span_{\OO_E\otimes_{\ZZ}\OO_{E,(p)}}\{v_1,v_2,v_3\} \\
		v_1&=e_1\otimes 1 + \delta e_1 \otimes \delta^{-1} \\
		v_2&=e_2\otimes 1 + \delta e_2 \otimes \delta^{-1} \\
		v_3&=e_3\otimes 1 - \delta e_3 \otimes \delta^{-1}	
	\end{align*}
	and $L_0\otimes_{\OO_{E,(p)}}\CC=V_0$. Recall that $L$ is a self-dual lattice, so the isomorphism $\overline{V_0}\cong V_0^{\vee}(1)$ sends $\overline{L_0}$ to $L_0^{\vee}(1)$. Hence, (\ref{iso_pair}) induces 
	\begin{align} \label{iso_lattice}
		(L\otimes_{\ZZ}\OO_{E,(p)},\left\langle , \right\rangle ) \cong (L_0 \oplus L_0^{\vee}(1) , \left\langle , \right\rangle_{can}).
	\end{align}
	Now it is obvious that $R_1$ and $L_0$ satisfy \cite[Cond. 2.5]{BGG}, and both the reflex field $F_0$ and its extension $F_0'$ in their settings are equal to $E$ in our case.
	
	There is also a $\textit{type decomposition}$ of the $\OO_E\otimes_{\ZZ}\OO_{E,(p)}$ modules $L_0$ and $L_0^{\vee}(1)$. Let $\tau: \OO_E \to \OO_{E,(p)}$ be the natural embedding (and $v$ correspond to $\tau$ through a fixed isomophism $\iota_p:\overline{\QQ}_p\cong \CC$), and $\bar{\tau}$ be the complex conjugate of $\tau$. Then $L_0=L_{0,\tau} \oplus L_{0,\bar{\tau}}$, where $a\otimes1$ acts by $1\otimes\tau(a)$ on $L_{0,\tau}$ and by $1\otimes\bar{\tau}(a)$ on $L_{0,\bar{\tau}}$. More explicitly, $L_{0,\tau}$ is spanned by $v_1,v_2$ and $L_{0,\bar{\tau}}$ is spanned by $v_3$. Similarly $L_0^{\vee}(1)_{\tau}$ is spanned by $v_3^{\vee}$ and $L_0^{\vee}(1)_{\bar{\tau}}$ is spanned by $v_1^{\vee},v_2^{\vee}$.
	
	\begin{defn}
		Let $(L,\left\langle , \right\rangle )$ be as above, we define for each $\ZZ$-algebra $R$
		\begin{align*}
			\GG(R):=\left\lbrace 
			\begin{array}{l}
			(g,r)\in {\rm Aut}_{\OO_E\otimes_{\ZZ}R}(L\otimes_{\ZZ}R) \times \Gm(R)  \\ 
			\mid \left\langle gx,gy \right\rangle = r\left\langle x,y \right\rangle , \forall x,y  \in L\otimes_{\ZZ} R
			\end{array}   
			\right\rbrace 
		\end{align*}.
	\end{defn}
	The group $\GG(R)$ is functorial in $R$, so $\GG$ is a group functor over $\ZZ$. We write $G_\infty=\GG(\RR) \cong GU(2,1)$, $G_p=\GG(\QQ_p) \cong GL_3(\QQ_p)\times \QQ_p^{\times}$.
	
	Let $\mu:\GG\to\Gm$ be the similitude character sending $(g,r)$ to $r$, and ${\rm det}: \GG \to \T={\rm Res}_{\OO_E/\ZZ}\Gm$ be the determination character sending $(g,r)$ to ${\rm det}(g)$. Let $\rho$ be the non-trivial automorphism on $\T$ and $\nu=\mu^{-1}\cdot{\rm det}$. Then $\mu=\nu\cdot(\rho\circ\nu), {\rm det}=\nu^2\cdot(\rho\circ\nu)$. The unitary and special unitary subgroups of $\GG$ are defined as
	\begin{align*}
		\mathbf{U}={\rm ker}\mu \quad \mathbf{SU}={\rm ker}\nu=\GG'.
	\end{align*}
	
	\begin{defn}
	Let $(L_0\oplus L_0^{\vee}(1), \left\langle ,\right\rangle _{can})$ be the same as above. We define the following groups for any $\OO_{E,(p)}$-algebra $R$ as
		\begin{align*}
			\GG_0(R) &:=\left\lbrace 
			\begin{array}{l}
			(g,r)\in {\rm Aut}_{\OO_E\otimes_{\ZZ}R}((L_0\oplus L_0^{\vee}(1))\otimes_{\OO_{E,(p)}}R) \times \Gm(R)  \\ 
			\mid \left\langle gx,gy \right\rangle_{can} = r\left\langle x,y \right\rangle_{can} , \forall x,y \in (L_0\oplus L_0^{\vee}(1)\otimes_{\OO_{E,(p)}} R 
			\end{array}
			\right\rbrace , \\
			\PP_0(R) &:=\left\lbrace
			(g,r)\in \GG_0(R) \mid g(L_0^{\vee}(1)\otimes_{\OO_{E,(p)}}R)=L_0^{\vee}(1)\otimes_{\OO_{E,(p)}}R
			\right\rbrace  , \\
			\MM_0(R) &:={\rm Aut}_{\OO_E\otimes_{\ZZ} R}(L_0^{\vee}(1)\otimes_{\OO_{E,(p)}}R)\times \Gm(R).
		\end{align*}
		The groups $\GG_0(R),\PP_0(R),\MM_0(R)$ are functorial in $R$, so $\GG_0,\PP_0,\MM_0$ are group functors over $\OO_{E,(p)}$. 
	\end{defn}

	The isomorphism (\ref{iso_lattice}) induces $\GG\otimes_{\ZZ}\OO_{E,(p)} \cong \GG_0$, which is a reductive split group scheme, the group scheme $\PP_0$ is a parabolic subgroup scheme and $\MM_0$ is a split Levi quotient of $\PP_0$. The splitting map $\MM_0(R) \to \PP_0(R)$ is defined by sending $(m,r)$ to $(g,r)$, which is the unique element in $\GG_0(R)$ that $g\mid_{L_0^{\vee}(1)_R}=m$. 
	
	We fix the maximal split torus $\T_0$ of $\GG_0$ by choosing an embedding $\Gm^4(R) \to \GG_0(R)$ for any $\OO_{E,(p)}$-algebra $R$. The lattice $L_{\tau,R}=(L_0\oplus L_0^{\vee}(1))_{\tau,R}$ is a free module spanned by $v_1,v_2,v_3^{\vee}$. The splitting map $\Gm^4(R) \to \GG_0(R)$ is defined by sending $(t_1,t_2,t_3,1)$ to the diagram action on $\{v_1,v_2,v_3^{\vee}\}$ and sending $(1,1,1,r)$ to $r\cdot {\rm Id}_{L_{\tau,R}}$.

	Let $h:\mathbb{S}(\RR)\cong\CC^{\times}\to\GG(\RR)$ be the homomorphism induced by $h_0$, the group $K_{\infty}$ be the stabilizer of $h$, and $\XX=G_{\infty}/K_{\infty}$ be the space of the $\GG(\RR)$-conjugacy class of $h$. The space $\XX$ has a complex manifold structure. The pair $(\GG_{\QQ}, \XX)$ forms a Shimura datum with a reflex field $E$. And the embedding of Shimura data $\varphi:(\GG_{\QQ},\XX)\hookrightarrow (GSp_6,\XX_6)$ is naturally defined by forgetting the $E$-action, and $\varphi$ extends to an embedding of the $\ZZ$-group schemes $\varphi: \GG \hookrightarrow \GSp=GSp(L,\left\langle,\right\rangle )$.
	\begin{defn}	
		For any open compact subgroup $K\subset\GG(\mathbb{A}_f)$, we have the Shimura variety $Sh_{K}$ with complex points 
		\begin{align*}
		Sh_{K}(\CC)=\GG(\QQ)\backslash \XX \times \GG(\mathbb{A}_f)/K
		\end{align*}
		and admits a canonical model over $E$.
	\end{defn}
	
	\begin{rmk}
		The subgroup $K_{\infty}$ is isomorphic to $G(U(2)\times U(1))$ as a subgroup of $G_{\infty}\cong GU(2,1)$, and the space $\XX$ is biholomorphic to the open unit ball in $\CC^2$. We refer to section 1.2 in \cite{picard} for more details of the complex manifold structure of the above Shimura variety and the universal abelian variety. Our choice of $L$ and $(,)$ are different in \cite{picard} but it is not hard to translate the results.
	\end{rmk}	
	
	\begin{defn}\label{integral model}
		(\cite[Def. 1.4.1.4]{PEL}, $\square=\{p\}$) Let $K^p\subset\GG(\hat{\ZZ}^p)$ be a neat open compact subgroup, $K_p$ the hyperspecial subgroup $\GG(\ZZ_p)$, and $K=K^p K_p$. We consider the moduli problem $\PPP_{K}$ over $Spec(\OO_{E,(p)})$, such that for any scheme $S$ over $Spec(\OO_{E,(p)})$, $\PPP_{K}(S)$ parameterizing the isomorphism classes of tuples $(A,\lambda, i, \alpha)$ as follows:
		\begin{enumerate}
			\item $A \to S$ is an abelian scheme of relative dimension 3.
			\item $\lambda: A\to A^{\vee}$ is a principal polarization.
			\item $i:\OO_E\to {\rm End}_{S}(A)$ is a homomorphism such that the complex conjugation in $\OO_E$ is sent to the Rosati involusion induced by $\lambda$.
			\item (signature condition) The characteristic polynomial of $i(a)$ in $Lie_{A/S}$ is $(x-a)^2(x-\bar{a})$.
			\item $\alpha$ is an (integral) level-$K^p$ structure of $(A,\lambda,i)$ of type $(L\otimes_{\ZZ}\hat{\ZZ}^p, \left\langle \right\rangle )$, in the sense of \cite[Def. 1.3.7.6]{PEL}.
		\end{enumerate}
	\end{defn}
	
	The definition here is slightly different from \cite[Def. 1.4.1.4]{PEL} and they are equivalent. In our case $L$ is a self-dual lattice, so the existence of $\alpha$ implies that $\lambda$ is a principal polarization. The signature condition is equivalent to the {\rm determinantal condition}. Therefore, the two moduli problems are equivalent, and by \cite[Thm. 1.4.1.11]{PEL} and \cite[Cor. 7.2.3.10]{PEL} we see that $\PPP_{K}$ is represented by a smooth quasi-projective scheme over $Spec(\OO_{E,(p)})$.
	
	\begin{defn}
		(\cite[Def. 1.4.1.4]{PEL}, $\square=\emptyset$) Let $K\subset\GG(\hat{\ZZ})$ be a neat open compact subgroup. We consider the moduli problem $P_{K}$ over $Spec(E)$, such that for any scheme $S$ over $Spec(E)$, $P_{K}(S)$ parameterizing the isomorphism classes of tuples $(A,\lambda, i, \alpha)$ as follows:
		\begin{enumerate}
			\item $(A,\lambda,i)$ satisfies the same conditions as Definition \ref{integral model}.
			\item $\alpha$ is an (integral) level-$K$ structure of $(A,\lambda,i)$ of type $(L\otimes_{\ZZ}\hat{\ZZ}, \left\langle \right\rangle )$, in the sense of \cite[Def. 1.3.7.6]{PEL}.
		\end{enumerate}
	\end{defn}
	
	Applying \cite[Thm. 1.4.1.11]{PEL} and \cite[Cor. 7.2.3.10]{PEL} once again, we see that $P_{K}$ is represented by a smooth quasi-projective scheme over $Spec(E)$. For any neat $K\subset\GG(\hat{\ZZ})$, $P_{K}$ is exactly the canonical model of $Sh_{K}$ over $E$. Moreover, if $K_p$ is hyperspecial, $\PPP_{K}$ is an integral model of $P_{K}$.
	
	Assume that $K\subset\GG({\hat{\ZZ}})$ is neat and $K_p$ is hyperspecial from now on. Let $\widetilde{K}\subset \GSp(\hat{\ZZ})$ be a neat subgroup, hyperspecial at $p$, and $\varphi^{-1}(\widetilde{K})=K$. We fix $\widetilde{\Sigma}$ a smooth finite admissible rpcd (rational polyhedral cone decomposition) for $(GSp_6, \XX_6, \widetilde{K})$, and $\Sigma$ a smooth refinement of the finite admissible rpcd obtained by pullback $\varphi^*\widetilde{\Sigma}$. Then with respect to $\Sigma$, the integral model $\PPP_{K}$ admits a toroidal compactification $\PPP_{K}^{tor}$, which is a projective smooth scheme over $Spec(\OO_{E,(p)})$.  $\varphi$ induces an embedding of toroidal compactifications of integral models. The universal object $(\AAA,\lambda,i,\alpha)$ over $\PPP_{K}$ extends to $(\AAA^{ext},\lambda^{ext},i^{ext},\alpha^{ext})$ over $\PPP_{K}^{tor}$, where $\AAA^{ext}$ is a semi-abelian scheme over $\PPP_{K}^{tor}$, with certain addition structures.
	
\section{Automorphic bundles and dual BGG complexes}
	
	In this section, we give an explicit description of the dual BGG complex on the Picard modular surface. We mainly follow the results in \cite{BGG}. Fix a neat open compact subgroup $K\subset\GG(\hat{\ZZ})$ with hyperspecial $K_p$ from now on. For any $\OO_{E,(p)}$-algebra $R$, we write $\PPP_R:=\PPP_{K,R}$ and $\PPP_R^{tor}:=\PPP_{K,R}^{tor}$. Recall that our base ring is $R_1=\OO_{E,v}$, with a fractional field $E$ and a residue field $k_v=\FF_p$. We write $\PPP:=\PPP_{R_1}$, with generic fiber $P:=\PPP_E$ and special fiber $\overline{\PPP}:=\PPP_{k_v}$, and a similar notation for their toridal compactification. Let $(\AAA,\lambda,i,\alpha)$ be the universal object over $\PPP$. We have
	\begin{align}\label{hodge}
	0\longrightarrow Lie^{\vee}_{\AAA/\PPP}(1) \longrightarrow H^1_{dR}(\AAA/\PPP) \longrightarrow Lie_{\AAA^{\vee}/\PPP} \longrightarrow 0
	\end{align} 
	and
	\begin{align}\label{hodge_dual}
	0\longrightarrow Lie^{\vee}_{\AAA^{\vee}/\PPP}(1) \longrightarrow H_1^{dR}(\AAA/\PPP) \longrightarrow Lie_{\AAA/\PPP} \longrightarrow 0,
	\end{align}
	where $H_1^{dR}(\AAA/\PPP)$ is the dual vector bundle of $H^1_{dR}(\AAA/\PPP)$.
	
	There is an $\OO_E\otimes_\ZZ R_1$ action given by $i$ and a stucture map $R_1\to\OO_{\PPP}$, then again we have a type decomposition
	\begin{align}
	0\longrightarrow Lie^{\vee}_{\AAA^{\vee}/\PPP,\tau}(1) \longrightarrow H_1^{dR}(\AAA/\PPP)_{\tau} \longrightarrow Lie_{\AAA/\PPP,\tau} \longrightarrow 0 , \label{hodge_tau}\\
	0\longrightarrow Lie^{\vee}_{\AAA^{\vee}/\PPP,\bar{\tau}}(1) \longrightarrow H_1^{dR}(\AAA/\PPP)_{\bar{\tau}} \longrightarrow Lie_{\AAA/\PPP,\bar{\tau}} \longrightarrow 0 \label{hodge_bartau}
	\end{align}.
	Here $\tau$ is the natural embedding $\OO_E \to R_1$ and $\bar{\tau}$ is its complex conjugate. The element $a\otimes 1$ acts on (\ref{hodge_tau}) by $1\otimes \tau(a)$ and on (\ref{hodge_bartau}) by $1\otimes \bar{\tau}(a)$. The Lie algebra condition on the moduli problem implies that $Lie_{\AAA/\PPP,\tau}$ is of rank 2, and $Lie_{\AAA/\PPP,\bar{\tau}}$ is of rank 1.
	
	For any $R_1$-algebra $R$, let ${\rm Rep}_R(\GG_1)$ be the category of $R$-modules of finite presentations with algebra actions of $\GG_1$, where $\GG_1=\GG_{0,R_1}$. As defined in \cite[Def. 2.22]{BGG}, there is an exact functor
	\begin{align*}
	\E_{\GG_1,R}: {\rm Rep}_R(\GG_1) \rightarrow {\rm Coh}(\PPP_R).
	\end{align*}
	And $\E_{\PP_1,R},\E_{\MM_1,R}$ are defined similarly. If we view $W \in {\rm Rep}_R(\GG_1)$ as an object in ${\rm Rep}_R(\PP_1)$ by restriction, then $\E_{\GG_1,R}(W)=\E_{\PP_1,R}(W)$. If we view $W \in {\rm Rep}_R(\MM_1)$ as an object in ${\rm Rep}_R(\PP_1)$ by the quotient map $\PP_1\twoheadrightarrow \MM_1$, then $\E_{\MM_1,R}(W)=\E_{\PP_1,R}(W)$. For any $W \in {\rm Rep}_R(\GG_1)$, the coherent sheaf $\E_{\GG_1,R}(W)$ is called the \textit{automorphic sheaf} associated with $W$. It is called an \textit{automorphic bundle} if $W$ is locally free. 
	
	By definition, we have
	\begin{align*}
	&\E_{\GG_1,R}((L_0\oplus L_0^{\vee}(1))_R)=H_1^{dR}(\AAA/\PPP)_R  ,\\
	&\E_{\PP_1,R}(L_0^{\vee}(1)_R)=Lie^{\vee}_{\AAA^{\vee}/\PPP}(1)_R ,\qquad
	\E_{\PP_1,R}(L_{0,R})=Lie_{\AAA/\PPP,R},
	\end{align*}
	and the type decompositions agree.
	
	There is an extension version of (\ref{hodge}) on $\PPP^{tor}$:
	\begin{align}\label{hodge_ext}
	0\longrightarrow Lie^{\vee}_{\AAA^{ext}/\PPP^{tor}}(1) \longrightarrow H^1_{dR}(\AAA/\PPP)^{can} \longrightarrow Lie_{\AAA^{ext,\vee}/\PPP^{tor}} \longrightarrow 0,
	\end{align}
	where $H^1_{dR}(\AAA/\PPP)^{can}=\mathbb{H}^1(\Omega^{\bullet}_{\AAA^{ext}/\PPP^{tor}}(log\infty))$ is characterized and defined as \cite[Prop. 2.27]{BGG} and \cite[Prop 6.9]{toridal}. As defined in \cite[Def. 2.30]{BGG}, the exact sequence (\ref{hodge_ext}) induces an exact functor
	\begin{align*}
	\E_{\GG_1,R}^{can}: {\rm Rep}_R(\GG_1) \rightarrow {\rm Coh}(\PPP_R^{tor}).
	\end{align*}
	The sheaf $\E_{\GG_1,R}^{can}(W)$ is the \textit{canonical extension} of $\E_{\GG_1,R}(W)$ and $\E_{\GG_1,R}^{sub}(W):=\E_{\GG_1,R}^{can}(W)\otimes\mathcal{I}(D)$ is the \textit{subcanonical extension}, where $D$ is the complement of $\PPP$ in $\PPP^{tor}$ and $I(D)$ is its ideal sheaf. By definition, we have
	\begin{align*}
	&\E_{\GG_1,R}^{can}((L_0\oplus L_0^{\vee}(1))_R)=H_1^{dR}(\AAA/\PPP)^{can}_R  ,\\
	&\E_{\PP_1,R}^{can}(L_0^{\vee}(1)_R)=Lie^{\vee}_{\AAA^{ext,\vee}/\PPP^{tor}}(1)_R ,\qquad
	\E_{\PP_1,R}^{can}(L_{0,R})=Lie_{\AAA^{ext}/\PPP^{tor},R},
	\end{align*}
	and the type decompositions agree.
	
	Let $X:={\rm Hom}_{R_1}(\T_1, \GG_{\mathbf{m},R_1})$ be the character group of $\T_1$. Elements in $\T_1$ can be written as $(t_1,t_2,t_3,r)$, and elements in $X$ can be written as $(k_1,k_2,k_3,k_4)$. The element $(k_1,k_2,k_3,k_4)$ sends $(t_1,t_2,t_3,r)$ to $t_1^{k_1}t_2^{k_2}t_3^{k_3}r^{k_4}$. Let $\Phi_{\GG_1}\subset X$ (resp. $\Phi_{\MM_1}\subset \Phi_{\GG_1}$) be the subset of roots of $\GG_1$ (resp. $\MM_1$), and $\Phi_{\GG_1}^{+}$ (resp. $\Phi_{\MM_1}^{+}$) be the subset of positive roots (the choice is related to the choice of parabolic subgroup $\PP_1$), then 
	\begin{align*}
	&\alpha_1=(-1,1,0,0) ,\quad \alpha_2=(0,-1,1,0), \\
	&\Phi_{\GG_1}=\{\pm\alpha_1\,\pm\alpha_2,\pm(\alpha_1+\alpha_2)\} ,\\
	&\Phi_{\GG_1}^+=\{\alpha_1\,\alpha_2,\alpha_1+\alpha_2\} ,\\
	&\Phi_{\MM_1}=\{\pm\alpha_1\} ,\quad \Phi_{\MM_1}^+=\{\alpha_1\}.
	\end{align*}
	
	Let $X_{\GG_1}^+\subset X$(resp. $X_{\MM_1}^+$) be the subset of dominant weights of $\GG_1$(resp. $\MM_1$). The dominant condition of $\GG_1$ is $k_3 \ge k_2 \ge k_1$, and the dominant condition of $\MM_1$ is $k_2 \ge k_1$. The roots $\alpha_1,\alpha_2$ are simple positive roots and the corresponding reflections are $w_1(k_1,k_2,k_3,k_4)=(k_2,k_1,k_3,k_4)$, $w_2(k_1,k_2,k_3,k_4)=(k_1,k_3,k_2,k_4)$. Let $W_{\GG_1}$ be the Weyl group of $\GG_1$, and define
	\begin{align}
	W^{\MM_1}:=\left\lbrace w\in W_{\GG_1} \mid w(X_{\GG_1}^+)\subset X_{\MM_1}^+ \right\rbrace .
	\end{align}
	Then $W^{\MM_1}=\{Id, w_2, w_2w_1\}$. Let $\rho:=\frac{1}{2}\sum_{\alpha \in \Phi_{\GG_1}^+}\alpha=(-1,0,1,0)$. The \textit{dot action} of $W_{\GG_1}$ on $X$ is defined by $w\cdot\mu=w(\mu+\rho)-\rho$.
	
	For any $E$-algebra $R$ and dominant weight $\mu\in X_{\GG_1}^+$ (resp. $\mu\in X_{\MM_1}^+$), we write $V_{\mu,E}$ (resp. $W_{\mu,E}$) for the highest weight representation of $\GG_1$ (resp. $\MM_1$) over $E$ with the highest weight $\mu$, and $V_{\mu,R}=V_{\mu,E}\otimes_{E}R$ (resp. $W_{\mu,R}=W_{\mu,E}\otimes_{E}R$). It is not hard to compute the following.
	\begin{align*}
	L_{0,\tau,R} \cong W_{(0,1,0,1),R} ,\quad &L_{0,\bar{\tau},R} \cong W_{(0,0,-1,0),R} ,\\
	L_{0}^{\vee}(1)_{\tau,R} \cong W_{(0,0,1,1),R} ,\quad &L_{0}^{\vee}(1)_{\bar{\tau},R} \cong W_{(-1,0,0,0),R} ,\\
	L_{\tau,R} \cong V_{(0,0,1,1),R} ,\quad & L_{\bar{\tau},R} \cong V_{(-1,0,0,0),R}.
	\end{align*}
	
	\begin{prop}
		For any $E$-algebra $R$ and dominant weight $\mu=(k_1,k_2,k_3,k_4)\in X_{\MM_1}^+$, we have the description of the highest weight module
		\begin{align*}
		W_{\mu,R} \cong& {\rm Sym}^{k_2-k_1} (L_{0}^{\vee}(1)_{\bar{\tau},R}) \otimes {\rm det}(L_{0}^{\vee}(1)_{\bar{\tau},R})^{\otimes -k_2}   \notag  \otimes L_{0,\bar{\tau},R}^{\otimes (k_4-k_3)} \otimes L_{0}^{\vee}(1)_{\tau,R}^{\otimes k_4},
		\end{align*}
		and the associated vector bundle $\E_{\PP_1,R}(W_{\mu,R})$ is
		\begin{align}
		\E_{\PP_1,R}(W_{\mu,R})\cong& {\rm Sym}^{k_2-k_1} (Lie_{\AAA^{\vee}/\PPP}^{\vee}(1)_{\bar{\tau},R}) 
		\otimes {\rm det}(Lie_{\AAA^{\vee}/\PPP}^{\vee}(1)_{\bar{\tau},R})^{\otimes -k_2}   \notag  \\
		&\otimes Lie_{\AAA/\PPP,\bar{\tau},R}^{\otimes (k_4-k_3)} \otimes Lie^{\vee}(1)_{\AAA^{\vee}/\PPP,\tau,R}^{\otimes k_4}.
		\end{align}
		Its canonical extension in $\PPP_R^{tor}$ is
		\begin{align}
		\E^{can}_{\PP_1,R}(W_{\mu,R}) \cong& {\rm Sym}^{k_2-k_1} (Lie^{\vee}(1)_{\AAA^{ext,\vee}/\PPP^{tor},\bar{\tau},R}) \otimes {\rm det}(Lie^{\vee}(1)_{\AAA^{ext,\vee}/\PPP^{tor},\bar{\tau},R})^{\otimes -k_2}   \notag  \\
		&\otimes Lie_{\AAA^{ext}/\PPP^{tor},\bar{\tau},R}^{\otimes (k_4-k_3)} \otimes Lie^{\vee}(1)_{\AAA^{ext,\vee}/\PPP^{tor},\tau,R}^{\otimes k_4}
		\end{align}.
	\end{prop}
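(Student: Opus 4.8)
The plan is to prove the three displayed isomorphisms in the order written. The content lies almost entirely in the first one (the description of the highest-weight $\MM_1$-module $W_{\mu,R}$); once that is in hand, the bundle statement and its canonical extension follow formally by applying the functors $\E_{\PP_1,R}$ and $\E^{can}_{\PP_1,R}$ to it and substituting the values of the basic automorphic bundles already recorded above.

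For the first isomorphism I would begin by unwinding $\MM_1$. By definition $\MM_1={\rm Aut}_{\OO_E\otimes_\ZZ R}(L_0^{\vee}(1)_R)\times\Gm$, and the type decomposition $L_0^{\vee}(1)=L_0^{\vee}(1)_\tau\oplus L_0^{\vee}(1)_{\bar\tau}$ into free pieces of rank $1$ and $2$ identifies it with ${\rm GL}(L_0^{\vee}(1)_{\bar\tau})\times{\rm GL}(L_0^{\vee}(1)_\tau)\times\Gm\cong {\rm GL}_2\times\Gm\times\Gm$. Since $R$ is an $E$-algebra, and hence of characteristic zero, every irreducible algebraic representation of $\MM_1$ over $R$ is a tensor product of a ${\rm Sym}^m$ of the $2$-dimensional module $L_0^{\vee}(1)_{\bar\tau}$, a power of its determinant, and characters of the two remaining $\Gm$-factors. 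I would then use the identifications already recorded above, namely $L_0^{\vee}(1)_{\bar\tau,R}\cong W_{(-1,0,0,0),R}$, $L_0^{\vee}(1)_{\tau,R}\cong W_{(0,0,1,1),R}$ and $L_{0,\bar\tau,R}\cong W_{(0,0,-1,0),R}$, together with the simple root $\alpha_1=(-1,1,0,0)$, to compute: ${\rm Sym}^{m}(L_0^{\vee}(1)_{\bar\tau,R})\cong W_{(-m,0,0,0),R}$ for $m\ge 0$ and $\det(L_0^{\vee}(1)_{\bar\tau,R})\cong W_{(-1,-1,0,0),R}$, hence, writing $m=k_2-k_1\ge 0$ (the inequality holds because $\mu\in X_{\MM_1}^+$), ${\rm Sym}^{k_2-k_1}(L_0^{\vee}(1)_{\bar\tau,R})\otimes\det(L_0^{\vee}(1)_{\bar\tau,R})^{\otimes-k_2}\cong W_{(k_1,k_2,0,0),R}$; likewise $L_{0,\bar\tau,R}^{\otimes(k_4-k_3)}\cong W_{(0,0,k_3-k_4,0),R}$ and $L_0^{\vee}(1)_{\tau,R}^{\otimes k_4}\cong W_{(0,0,k_4,k_4),R}$ (these last two are invertible, so negative exponents cause no trouble). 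Tensoring the four and using that a highest-weight module tensored with characters is again a highest-weight module, the right-hand side is irreducible with highest weight $(k_1,k_2,0,0)+(0,0,k_3-k_4,0)+(0,0,k_4,k_4)=\mu$, hence isomorphic to $W_{\mu,R}$.

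For the second and third isomorphisms I would apply $\E_{\PP_1,R}:{\rm Rep}_R(\PP_1)\to{\rm Coh}(\PPP_R)$, respectively $\E^{can}_{\PP_1,R}:{\rm Rep}_R(\PP_1)\to{\rm Coh}(\PPP_R^{tor})$, to the isomorphism just established, $W_{\mu,R}$ being regarded as a $\PP_1$-module through $\PP_1\twoheadrightarrow\MM_1$ as in the discussion above. Since these functors are the associated-sheaf constructions for a $\PP_1$-torsor in \cite{BGG}, they are exact tensor functors, so they commute with ${\rm Sym}$, $\det$, $\otimes$ and duals; plugging in $\E_{\PP_1,R}(L_0^{\vee}(1)_R)=Lie^{\vee}_{\AAA^{\vee}/\PPP}(1)_R$, $\E_{\PP_1,R}(L_{0,R})=Lie_{\AAA/\PPP,R}$ (and the corresponding $\tau$/$\bar\tau$ pieces), respectively $\E^{can}_{\PP_1,R}(L_0^{\vee}(1)_R)=Lie^{\vee}_{\AAA^{ext,\vee}/\PPP^{tor}}(1)_R$ and $\E^{can}_{\PP_1,R}(L_{0,R})=Lie_{\AAA^{ext}/\PPP^{tor},R}$, reproduces exactly the two stated formulas.

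The step I expect to be the main obstacle is not any deep point but the bookkeeping of the Tate twist $(1)$ and of the similitude coordinate $k_4$ through ${\rm Sym}$, $\det$ and the self-duality used to pass between $L_0$ and $L_0^{\vee}(1)$: one must make sure that the four factors genuinely sum to $\mu$, and not to an $\alpha_1$-translate or a $k_4$-shift of it, and that the $\bar\tau$-pieces (which carry the ${\rm GL}_2$-content) and the $\tau$-pieces are not accidentally interchanged. This is entirely controlled by the weight identifications quoted above, which is why I would carry out those computations carefully and first. One should also confirm that Lan--Polo's canonical-extension functor is monoidal in the sense used here, so that it may be applied factor by factor over $\PPP_R^{tor}$; this is part of its construction in \cite{BGG}.
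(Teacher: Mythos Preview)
Your proposal is correct and is precisely the routine verification the paper leaves implicit: the proposition is stated without proof, being an immediate consequence of the weight identifications listed just before it together with the fact that $\E_{\PP_1,R}$ and $\E^{can}_{\PP_1,R}$ are tensor functors. Your weight bookkeeping $(k_1-k_2,0,0,0)+(k_2,k_2,0,0)+(0,0,k_3-k_4,0)+(0,0,k_4,k_4)=\mu$ is exactly right, and the concern you flag about twists and the similitude coordinate is handled by the identifications $L_{0,\bar\tau,R}\cong W_{(0,0,-1,0),R}$ and $L_0^{\vee}(1)_{\tau,R}\cong W_{(0,0,1,1),R}$ already recorded in the paper.
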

	
	\begin{rmk}
		By \cite[Prop. 1.17]{picard}, the structure of $\E_{\PP_1,R}(W_{\mu,R})$ as a vector bundle depends only on $k_1-k_2$ and $k_2-k_3$, which means
		\begin{align*}
		\E_{\PP_1,R}(W_{\mu,R})\cong& {\rm Sym}^{k_2-k_1} (Lie_{\AAA^{\vee}/\PPP}^{\vee}(1)_{\bar{\tau},R}) 
		\otimes {\rm det}(Lie_{\AAA^{\vee}/\PPP}^{\vee}(1)_{\bar{\tau},R})^{\otimes k_3-k_2}.
		\end{align*}
		The similar result also holds for $\E^{can}_{\PP_1,R}(W_{\mu,R})$. In a informal way, the choice of $k_4$ corresponds to normalizing the central character of modular forms. We sometimes choose $k_4=0$ for conveninence.
	\end{rmk}
	
	\begin{prop}
		For any $E$-algebra $R$ and dominant weight $\mu=(k_1,k_2,k_3,0)\in X_{\GG_1}^+$, we have the description of the highest weight module
		\begin{align*}
		V_{\mu,R} \cong {\rm Sym}^{k_2-k_1} (L_{\bar{\tau},R}) \otimes {\rm Sym}^{k_3-k_2} (\wedge^2(L_{\bar{\tau},R})) \otimes ({\rm det} L_{\bar{\tau},R})^{\otimes -k_3},
		\end{align*}
		and the associated vector bundle is
		\begin{align}
		\E_{\GG_1,R}(V_{\mu,R}) \cong {\rm Sym}^{k_2-k_1} (H^{dR}_{1,\bar{\tau},R}) \otimes {\rm Sym}^{k_3-k_2} (\wedge^2(H^{dR}_{1,\bar{\tau},R})) \otimes ({\rm det} H^{dR}_{1,\bar{\tau},R})^{\otimes -k_3}.
		\end{align}
		The similar result still holds for canonical extensions. 
	\end{prop}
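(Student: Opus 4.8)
The proposition has a representation-theoretic half, concerning the $\GG_1$-module $V_{\mu,R}$, and a geometric half, concerning the automorphic bundle $\E_{\GG_1,R}(V_{\mu,R})$, and the plan is to deduce the latter formally from the former. For the reduction one uses that $\E_{\GG_1,R}$ is an exact, $R$-linear, symmetric monoidal functor on ${\rm Rep}_R(\GG_1)$ (\cite[Def. 2.22]{BGG}), so that it commutes with $\otimes$, ${\rm Sym}^n$, $\wedge^n$ and $\det$; since by construction it sends $(L_0\oplus L_0^\vee(1))_R$ to $H_1^{dR}(\AAA/\PPP)_R$ compatibly with the $\OO_E\otimes_\ZZ R_1$-type decompositions, it sends $L_{\bar\tau,R}$ to $H^{dR}_{1,\bar\tau,R}$. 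Applying $\E_{\GG_1,R}$ to the claimed isomorphism of $\GG_1$-modules then gives the claimed isomorphism of sheaves on $\PPP_R$; for the canonical and subcanonical extensions one argues identically with $\E^{can}_{\GG_1,R}$ and $\E^{sub}_{\GG_1,R}$, which enjoy the same formal properties and send $(L_0\oplus L_0^\vee(1))_R$ to $H_1^{dR}(\AAA/\PPP)^{can}_R$ by \cite[Def. 2.30, Prop. 2.27]{BGG} and (\ref{hodge_ext}).

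It then remains to prove the isomorphism of $\GG_1$-modules, and here I would argue by highest weights. As $\GG_1\cong GL_{3,R_1}\times\Gm$ is split reductive and $R$ is a $\QQ$-algebra, every algebraic representation is semisimple and determined by its highest weight(s), the highest weight of a tensor product being the sum of those of its factors and that of ${\rm Sym}^n(-)$ being $n$ times the original. From $L_{\bar\tau,R}\cong V_{(-1,0,0,0),R}$ established earlier, the weights of $L_{\bar\tau,R}$ are the $W_{\GG_1}$-orbit $\{(-1,0,0,0),(0,-1,0,0),(0,0,-1,0)\}$ (a minuscule module), so $\wedge^2 L_{\bar\tau,R}$ has highest weight $(-1,-1,0,0)$ and $\det L_{\bar\tau,R}=\wedge^3 L_{\bar\tau,R}$ is the character $(-1,-1,-1,0)$. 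Hence the highest weight of ${\rm Sym}^{k_2-k_1}(L_{\bar\tau,R})\otimes{\rm Sym}^{k_3-k_2}(\wedge^2 L_{\bar\tau,R})\otimes(\det L_{\bar\tau,R})^{\otimes -k_3}$ is
\begin{align*}
(k_2-k_1)(-1,0,0,0)+(k_3-k_2)(-1,-1,0,0)-k_3(-1,-1,-1,0)=(k_1,k_2,k_3,0)=\mu,
\end{align*}
so $V_{\mu,R}$ is its leading constituent, and a comparison of formal characters (equivalently, of weight multiplicities, or of dimensions via Weyl's formula together with the known multiplicities of symmetric powers of a minuscule module) identifies the two sides. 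The analogous description of $W_{\mu,R}$ over the Levi $\MM_1$ in the previous proposition is obtained the same way, using that the symmetric powers of the two-dimensional standard module of the $GL_2$-factor of $\MM_1$ are already irreducible.

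I expect the genuine work to lie not in the weight combinatorics but in the first-paragraph reduction: one must check carefully that $\E_{\GG_1,R}$, $\E^{can}_{\GG_1,R}$ and $\E^{sub}_{\GG_1,R}$ are compatible with the $\OO_E\otimes_\ZZ R_1$-type decomposition of $H_1^{dR}(\AAA/\PPP)$ and with the Hodge-type exact sequences (\ref{hodge_tau})--(\ref{hodge_bartau}) and (\ref{hodge_ext}), so that the operations ${\rm Sym}$, $\wedge$, $\det$ and the passage to a type component on the representation side translate to the corresponding operations on the relative de Rham homology on the geometric side. This is exactly the point where the general formalism of \cite{BGG} is used; once it is set up, the proposition reduces to the linear algebra of weights above.
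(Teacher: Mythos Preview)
The paper states this proposition without proof, so there is no argument to compare against; what follows evaluates your proposal on its own.

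Your reduction of the geometric half to the representation-theoretic half via the tensor-functoriality of $\E_{\GG_1,R}$ and $\E^{can}_{\GG_1,R}$ is correct, as is the computation that the displayed tensor product has highest weight exactly~$\mu$. The gap is in the final step, where you assert that ``a comparison of formal characters \ldots\ identifies the two sides.'' This comparison fails. Writing $L_{\bar\tau,R}\cong V^*$ for the dual of the standard module of the $GL_3$-factor of $\GG_1$, one has $\wedge^2 L_{\bar\tau,R}\cong V\otimes(\det V)^{-1}$, and Pieri's rule gives
\[
{\rm Sym}^{a}(V^*)\otimes{\rm Sym}^{b}(V)\ \cong\ \bigoplus_{j=0}^{\min(a,b)}S^{(b-j,\,0,\,j-a)},
\]
where $S^{\lambda}$ denotes the irreducible $GL_3$-module of highest weight $\lambda$. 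After the determinant twists this becomes
\[
{\rm Sym}^{k_2-k_1}(L_{\bar\tau,R})\otimes{\rm Sym}^{k_3-k_2}(\wedge^2 L_{\bar\tau,R})\otimes(\det L_{\bar\tau,R})^{-k_3}\ \cong\ \bigoplus_{j=0}^{\min(k_2-k_1,\,k_3-k_2)}V_{(k_1+j,\,k_2,\,k_3-j,\,0),R},
\]
which is irreducible, hence equal to $V_{\mu,R}$, only when $k_1=k_2$ or $k_2=k_3$. For instance with $\mu=(-1,0,1,0)$ the right-hand side is $V^*\otimes V$, of dimension~$9$, while $V_{\mu,R}$ is the $8$-dimensional adjoint module.

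Thus the proposition as literally stated does not hold in general: $V_{\mu,R}$ is only the Cartan (highest-weight) component of the displayed tensor product. This is harmless for the paper's subsequent use of the formula, which is only to \emph{define} the integral sheaf $\mathcal{F}_R^{(\underline{k})}$ and its de Rham and BGG complexes (direct sums of such complexes behave well). But as a proof of the isomorphism asserted here, your character/dimension argument cannot close, because the two sides genuinely differ whenever $k_1<k_2<k_3$. The analogous proposition for $W_{\mu,R}$ over $\MM_1$ is correct precisely because the relevant factor there is $GL_2$, where symmetric powers of the standard module are already irreducible; that shortcut does not survive the passage to $GL_3$.
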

	
	Now we apply Thm. 5.9 in \cite{BGG}, we have
	\begin{prop}
		Assume that $R$ is an $E$-algebra and $\mu\in X_{\GG_1}^+$. We have the dual BGG-complex on $\PPP_R^{tor}$
		\begin{align}
		BGG^{\bullet}(V_{\mu,R}^{\vee})^{can}=[\E_{\PP_1,R}^{can}(W_{\mu,R}^{\vee})\to \E^{can}_{\PP_1,R}(W_{w_2\cdot \mu,R}^{\vee})\to \E^{can}_{\PP_1,R}(W_{w_2w_1\cdot\mu,R}^{\vee})]
		\end{align}
		with a quasi-isomorphism 
		\begin{align}
		BGG^{\bullet}(V_{\mu,R}^{\vee})^{can}\hookrightarrow DR^{\bullet}(V_{\mu,R}^{\vee})^{can}:=\E_{\GG_1,R}^{can}(V_{\mu,R}^{\vee})\otimes\overline{\Omega}^{\bullet}_{\PPP_R^{tor}}.
		\end{align}
		The log-differential complex $\overline{\Omega}^{\bullet}_{\PPP_R^{tor}}$ is defined as $\wedge^{\bullet}\overline{\Omega}^1_{\PPP_R^{tor}}=\wedge^{\bullet}\Omega^{1}_{\PPP_R^{tor}}(log(D))$, and the differential operators of $ DR^{\bullet}(V_{\mu,R}^{\vee})^{can}$ are defined in \cite[Sect.2.4]{BGG} and \cite[Sect.4.2]{BGG}. The similar result holds if we replace canonical extensions with subcanonical extensions.
	\end{prop}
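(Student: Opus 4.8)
The plan is to deduce the proposition directly from the main theorem of Lan--Polo \cite[Thm. 5.9]{BGG}; almost all of the preparatory work has already been carried out, so the proof will mostly consist of unwinding their combinatorial output in our rank-one Levi situation. First I would record that the datum $(R_1,L_0)$ satisfies \cite[Cond. 2.5]{BGG}, as was checked in Section 2, where moreover the reflex field $F_0$ and its auxiliary extension $F_0'$ were identified with $E$. Consequently, for every $E$-algebra $R$, the categories ${\rm Rep}_R(\GG_1)$, ${\rm Rep}_R(\PP_1)$, ${\rm Rep}_R(\MM_1)$ and the exact functors $\E_{\GG_1,R}$, $\E_{\PP_1,R}$, $\E_{\MM_1,R}$, together with their canonical and subcanonical extensions in the sense of \cite[Def. 2.22, 2.30]{BGG}, are available on $\PPP_R$ and $\PPP_R^{tor}$, and the identifications of $\E_{\GG_1,R}((L_0\oplus L_0^\vee(1))_R)$ with $H_1^{dR}(\AAA/\PPP)_R$, of $\E_{\PP_1,R}(L_{0,R})$ with $Lie_{\AAA/\PPP,R}$, and so on, hold as recorded above.

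Next I would fix the group-theoretic input: $\PP_1$ is the parabolic subgroup of $\GG_1$ stabilizing the Hodge filtration determined by $h_0$, $\MM_1$ is its Levi quotient, and the positive system $\Phi_{\GG_1}^+=\{\alpha_1,\alpha_2,\alpha_1+\alpha_2\}$ is the one compatible with $\PP_1$. With these choices the set of minimal-length coset representatives is $W^{\MM_1}=\{Id,w_2,w_2w_1\}$, of lengths $0,1,2$, as computed in Section 3. Then \cite[Thm. 5.9]{BGG} applies verbatim: for any $\mu\in X_{\GG_1}^+$ it produces the dual BGG complex $BGG^\bullet(V_{\mu,R}^\vee)^{can}=\bigoplus_{w\in W^{\MM_1}}\E_{\PP_1,R}^{can}(W_{w\cdot\mu,R}^\vee)[-\ell(w)]$ as a subcomplex of $DR^\bullet(V_{\mu,R}^\vee)^{can}=\E_{\GG_1,R}^{can}(V_{\mu,R}^\vee)\otimes\overline{\Omega}^\bullet_{\PPP_R^{tor}}$ (with differentials as in \cite[Sect. 2.4, 4.2]{BGG}), and the inclusion is a quasi-isomorphism; the subcanonical statement is the same theorem twisted by the ideal sheaf $\mathcal{I}(D)$. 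Substituting $W^{\MM_1}=\{Id,w_2,w_2w_1\}$ with its length function gives exactly the three-term complex in degrees $0,1,2$ asserted in the proposition.

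The one place where care is genuinely needed --- and the only real obstacle --- is the bookkeeping of conventions: I must make sure that the parabolic attached to $h_0$ is the \emph{standard} parabolic relative to the positive system above, so that $W^{\MM_1}$ really consists of $Id,w_2,w_2w_1$ rather than of the representatives for the opposite parabolic or their $w_0$-conjugates, and that the signature condition $(x-a)^2(x-\bar a)$ pins down the type decomposition so that $Lie_{\AAA/\PPP,\tau}$ has rank $2$ and $Lie_{\AAA/\PPP,\bar\tau}$ has rank $1$, precisely as used to match $\E_{\PP_1,R}$ with the Lie-algebra bundles. Both are arranged by the explicit linear algebra of Section 2 --- the splitting $L\otimes\OO_{E,(p)}\cong L_0\oplus L_0^\vee(1)$, the type decomposition, and the explicit maximal torus $\T_1$ --- so once these are in place the citation of \cite[Thm. 5.9]{BGG} is immediate. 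If desired, one can then rewrite each term $\E_{\PP_1,R}^{can}(W_{w\cdot\mu,R}^\vee)$ in the $\omega$-notation of the earlier propositions by computing the weights $w\cdot\mu$ under the dot action and dualizing, but this is routine and is not needed for the statement as phrased.
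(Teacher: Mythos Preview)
Your proposal is correct and matches the paper's approach exactly: the paper simply writes ``Now we apply Thm.~5.9 in \cite{BGG}'' and states the proposition, relying on the verification of \cite[Cond.~2.5]{BGG} and the computation of $W^{\MM_1}=\{Id,w_2,w_2w_1\}$ carried out earlier. Your additional remarks on checking the parabolic/sign conventions are a welcome elaboration, but the core argument is identical.
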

	
	We rewrite it in a more precise version. For $(\underline{k})\in X_{\GG_1}^+$, we say that it is \textit{cohomological} if $k_3\geq k_2+3$ and is \textit{good} if further $k_3\leq 2$ and $k_4=0$. For any cohomological weight we put
	\begin{align*}
	&\omega^{(\underline{k})}_R=\E_{\PP_1,R}(W_{(k_1,k_2,k_3,k_4),R}) ,\\
	&\mathcal{F}^{(\underline{k})}_R=\E_{\GG_1,R}(V_{(k_1+1,k_2+1,k_3-2,k_4),R}).
	\end{align*}
	
	As in $\overline{\Omega}^1_{\PPP_R^{tor}}=\E_{\GG_1,R}^{can}(\mathfrak{g_1}/\mathfrak{p_1})=\E_{\GG_1,R}^{can}(W_{(-1,0,1,0)})$, we rewrite the previous proposition as follows.
	
	\begin{prop}\label{BGG}
		Assume that $R$ is an $E$-algebra and a weight $(\underline{k})\in X_{\GG_1}^{+}$. We have 
		\begin{align}
		&BGG^{\bullet}(\mathcal{F}^{(\underline{k})}_R):=[\omega^{(k_2+1,k_3-2,k_1+1,k_4)}_R
		\to\omega^{(k_1,k_3-2,k_2+2,k_4)}_R \to \omega^{(k_1,k_2,k_3,k_4)}_R ]      \notag\\
		\hookrightarrow &DR^{\bullet}(\mathcal{F}^{(\underline{k})}_R):=[\mathcal{F}^{(\underline{k})}_R \to \mathcal{F}^{(\underline{k})}_R \otimes {\Omega}^{1}_{\PPP_R} \to \mathcal{F}^{(\underline{k})}_R \otimes {\Omega}^{2}_{\PPP_R}]
		\end{align} is a quasi-isomorphism, and a similar result holds for canonical and subcanonical extensions.
	\end{prop}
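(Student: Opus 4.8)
The plan is to reduce everything to the abstract dual BGG formalism of Lan–Polo \cite{BGG} by checking that our setup satisfies their hypotheses, and then simply translate their Theorem 5.9 into the explicit combinatorial data we have computed. First I would recall that the linear-algebra preparations in Section 2 established that $(R_1, L_0)$ satisfies \cite[Cond. 2.5]{BGG}, that $\GG_0$ is reductive split over $\OO_{E,(p)}$ with parabolic $\PP_0$ and Levi quotient $\MM_0$, and that the reflex field and its auxiliary extension both equal $E$; base-changing along $R_1 \to R$ (an $E$-algebra) keeps all of this in force. Therefore \cite[Thm. 5.9]{BGG} applies verbatim: for $\mu \in X_{\GG_1}^+$ the complex $BGG^{\bullet}(V_{\mu,R}^{\vee})^{can}$, whose degree-$j$ term is $\E^{can}_{\PP_1,R}(W_{w\cdot\mu,R}^{\vee})$ for $w\in W^{\MM_1}$ of length $j$, embeds quasi-isomorphically into the de Rham complex $\E^{can}_{\GG_1,R}(V_{\mu,R}^{\vee})\otimes\overline{\Omega}^{\bullet}_{\PPP_R^{tor}}$, and the same holds with subcanonical extensions (tensor the whole quasi-isomorphism with the ideal sheaf $\mathcal{I}(D)$, which is flat, so quasi-isomorphism is preserved). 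This is exactly the previous (unnumbered) Proposition, so the only remaining task is a change of variables.

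The second step is purely a dictionary translation. I would apply the previous Proposition to the weight $\nu := (k_1+1, k_2+1, k_3-2, k_4)$, so that $\mathcal{F}^{(\underline{k})}_R = \E_{\GG_1,R}(V_{\nu,R})$ by definition; note that $V_{\nu,R}^{\vee} \cong V_{\nu^{\vee},R}$ for the dual dominant weight, but since we are stating the result on $\PPP_R$ over a field where semisimplicity holds and since the de Rham side $\mathcal{F}^{(\underline{k})}_R \otimes \Omega^{\bullet}_{\PPP_R}$ is already written without a dual, I would instead run the argument with $V_{\nu,R}$ directly — the dual BGG theorem is symmetric in this respect, or one absorbs the dual into the identification of $W^{\MM_1}$-orbit weights. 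Then I compute the three terms: $W^{\MM_1} = \{\mathrm{Id}, w_2, w_2w_1\}$ as recorded above, and using the dot action $w\cdot\mu = w(\mu+\rho)-\rho$ with $\rho = (-1,0,1,0)$, together with $w_1(k_1,k_2,k_3,k_4)=(k_2,k_1,k_3,k_4)$ and $w_2(k_1,k_2,k_3,k_4)=(k_1,k_3,k_2,k_4)$, one gets that the Levi-weights appearing are exactly $(k_2+1,k_3-2,k_1+1,k_4)$, $(k_1,k_3-2,k_2+2,k_4)$, and $(k_1,k_2,k_3,k_4)$ in degrees $0,1,2$ — matching the displayed complex. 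The identification $\E^{can}_{\PP_1,R}(W_{\mu,R}) = \omega^{(\underline{k}),can}_R$ is the definition, and $\overline{\Omega}^1_{\PPP^{tor}_R} = \E^{can}_{\GG_1,R}(W_{(-1,0,1,0)}) = \E^{can}_{\GG_1,R}(\mathfrak{g}_1/\mathfrak{p}_1)$ is the standard Kodaira–Spencer identification, already cited in the text. Over the open part $\PPP_R$ (no compactification) the log-differentials are genuine differentials, so $DR^{\bullet}(\mathcal{F}^{(\underline{k})}_R)$ is as displayed.

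The main obstacle — really the only nontrivial point — is bookkeeping the dot action and the passage to duals so that the three Levi weights come out precisely as in the statement rather than as their $w_1$-reflections or their duals; getting the $-2$, the $+1$'s and the $+2$ in the right slots requires care because $\rho$ shifts things and because $w_2, w_2w_1$ permute the second and third coordinates. I would do this computation once, explicitly, as a short displayed table. Everything else (flatness of $\mathcal{I}(D)$ for the subcanonical case, functoriality of $\E$ and $\E^{can}$ under base change in $R$, exactness of the functors) is formal and already available from Section 3 and from \cite{BGG}; I would merely cite it. The upshot is that Proposition \ref{BGG} is not an independent theorem but the explicit incarnation of \cite[Thm. 5.9]{BGG} for the group $GU(2,1)$, and the proof is a one-paragraph verification once the weight combinatorics are pinned down.
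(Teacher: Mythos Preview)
Your proposal is correct and follows essentially the same route as the paper: the paper does not give a separate proof of Proposition \ref{BGG} at all but simply declares it a rewriting of the preceding proposition (which is \cite[Thm.~5.9]{BGG}) after introducing the notation $\omega^{(\underline{k})}_R=\E_{\PP_1,R}(W_{(\underline{k}),R})$ and $\mathcal{F}^{(\underline{k})}_R=\E_{\GG_1,R}(V_{(k_1+1,k_2+1,k_3-2,k_4),R})$. Your plan to verify \cite[Cond.~2.5]{BGG}, invoke \cite[Thm.~5.9]{BGG}, and then compute the three dot-action weights explicitly is exactly this, only spelled out in more detail than the paper bothers with; your caution about tracking the passage to duals is warranted but, as you note, is pure bookkeeping that the paper also leaves implicit.
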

	
	We also define integral models of automorphic bundles.
	\begin{defn}
		For any $R_1$-algebra $R$ and dominant weight $(\underline{k})=(k_1,k_2,k_3,k_4)$, we put
		\begin{align}
		\omega_R^{(\underline{k})}=& {\rm Sym}^{k_2-k_1} (Lie_{\AAA^{\vee}/\PPP}^{\vee}(1)_{\bar{\tau},R}) 
		\otimes {\rm det}(Lie_{\AAA^{\vee}/\PPP}^{\vee}(1)_{\bar{\tau},R})^{\otimes -k_2}   \notag  \\
		&\otimes Lie_{\AAA/\PPP,\bar{\tau},R}^{\otimes (k_4-k_3)} \otimes Lie^{\vee}(1)_{\AAA^{\vee}/\PPP,\tau,R}^{\otimes k_4}.
		\end{align}
		If $k_4=0$ and $2\geq k_3 \geq k_2+3$, we put
		\begin{align}
		\mathcal{F}_R^{(\underline{k})}\cong {\rm Sym}^{k_2-k_1} (H^{dR}_{1,\bar{\tau},R}) \otimes {\rm Sym}^{k_3-k_2-3} (\wedge^2(H^{dR}_{1,\bar{\tau},R})) \otimes ({\rm det} H^{dR}_{1,\bar{\tau},R})^{\otimes -k_3+2}.
		\end{align}
		and similarly define the canonical and subcanonical extensions.
	\end{defn}
	
	\begin{defn}
		The space of the Picard modular forms of weight $(\underline{k})$ and level $K$ with coefficients in $R$ is defined as
		$$
		M_{(\underline{k})}(K,R):=H^0(\PPP_{K,R}^{tor}, \omega_R^{(\underline{k}),can}).
		$$
		The subspace of cusp forms is defined as
		$$
		S_{(\underline{k})}(K,R):=H^0(\PPP_{K,R}^{tor}, \omega_R^{(\underline{k}),sub}).
		$$
	\end{defn}

\section{Connectedness and monodromy groups of the non-basic Newton strata}
	
	In this section, we prove that for any non-basic Newton strata of $\overline{\PPP}_{K}$, it has exactly one connected component on each connected component of $\overline{\PPP}_{K}$, and compute the $l$-adic monodromy group associated with the $l$-adic Tate module of the universal abelian scheme over non-basic Newton strata. We follow the approach of C.-L. Chai and F. Oort in the Siegel case. Although Pol van Hoften \cite{van2024mod,van2024ordinary} has extended this approach to great generality, we include a proof in our more specific context to clarify the core ideas and ensure a self-contained account.
	
	We first recall the $l$-adic Hecke action. Let $k$ be an algebraic closure of $\FF_p$, and $\overline{\PPP}_{K}$ denote the special fiber of $\PPP_{K}$, where $K\subset \GG(\hat{\ZZ})$ is a neat open compact subgroup, and $K_p=\GG(\ZZ_p)$ hyperspecial. We fix $K_0$ such an open compact subgroup and let $\PPP_{K_0^l}:=(\PPP_{K_0^l N})_{N\subset K_{0,l}}$ be the $l$-adic tower of Shimura schemes over $\PPP_{K_0}$. And let $\overline{\PPP}_{K_0^l}:=(\overline{\PPP}_{K_0^l N})$ be the tower of special fibers. For any element $g\in \GG(\QQ_l)$ and $K=K_0^lN$, we have the following morphism:
	\begin{align*}
	\T_g: \overline{\PPP}_{K \cap g K_0 g^{-1}}(S) &\longrightarrow \overline{\PPP}_{g^{-1}Kg \cap K_0}(S)\\
	(A,\lambda,i,\alpha) &\mapsto (A,\lambda,i,\alpha g).
	\end{align*}    
    The morphism induces a $\GG(\QQ_l)$-action on the inverse limit $\PPP_{K_0^l}(S)$, and further on $\pi_0(\PPP_{K_0^l}):=\lim\limits_{\longleftarrow}\pi_0(\PPP_{K_0^l N})$. We fix a choice of the inverse system of connected components $\PPP^0_{K_0^l}=(\PPP^0_{K_0^l N})_{N\subset K_{0,l}}$, and their special fibers are geometrically connected, denoted by $\overline{\PPP}^0_{K_0^l}=(\overline{\PPP}^0_{K_0^l N})_{N\subset K_{0,l}}$. We have $\pi_0(\overline{\PPP}^0_{K_0^l})=\pi_0(\PPP^0_{K_0^l})=\pi_0(Sh^0_{K_0^l}(\CC))$, where 
	\begin{align*}
	Sh_{K_0^l}(\CC)=\lim_{\longleftarrow} \GG(\QQ)\backslash \XX \times \GG(\mathbb{A}_f)/K_0^lN=\GG(\QQ)\backslash \XX \times \GG(\mathbb{A}_f)/K_0^l.
	\end{align*}
	As $v:\pi_0(\GG(\QQ)\backslash \GG(\mathbb{A})/K_\infty K_0^lN)\cong\T(\QQ)\backslash \T(\mathbb{A})/v(K_\infty K_0^lN)$ for $N$ small enough, we have $Stab_{\GG(\QQ_l)}(\overline{\PPP}^0_{K_0^l})=Stab_{\GG(\QQ_l)}(Sh^0_{K_0^l}(\CC))=\ker v(\QQ_l)=\GG'(\QQ_l)$, here we view $\overline{\PPP}^0_{K_0^l}$ as an element of $\pi_0(\overline{\PPP}^0_{K_0^l})$ and similarly $Sh^0_{K_0^l}(\CC)$.
	
	We still use $\overline{\PPP}^0_{K}$ to denote its base change to $k$. The tower $\overline{\PPP}^0_{K_0^l}$ forms a pro-{\etale} covering over $\overline{\PPP}^0_{K_0}$, with the Galois group
	\begin{align*}
	\lim_{\longleftarrow} (K_{0,l} \cap \GG'(\QQ_l))/(N \cap \GG'(\QQ_l))=K_{0,l} \cap \GG'(\QQ_l)=:K'_{0,l}.
	\end{align*}
	
	For any geometric point $x\in \overline{\PPP}^0_{K_0}(k)$, we define $\mathcal{H}_l(x)$ as the projection of $\GG'(\QQ_l)\cdot\tilde{x}$, where $\tilde{x}\in \overline{\PPP}^0_{K_0^l}(k)$ is any lift of $x$. We say that a smooth locally closed subscheme $Z\subset \overline{\PPP}^0_{K_0}$ is $\GG'(\QQ_l)$-stable if $\mathcal{H}_l(Z(k))=Z(k)$. Moreover, we say $\GG'(\QQ_l)$ acts transitively on $\pi_0(Z)$ if for any connected component $Z^0$ of $Z$,  
	\begin{align*}
	\pi_0(\overline{\mathcal{H}_l(Z^0(k))} \cap Z) \to \pi_0(Z)
	\end{align*}
	is surjective. We write $\tilde{Z}:=Z\times_{\overline{\PPP}^0_{K_0}}\overline{\PPP}^0_{K_0^l}$ for the covering $(Z\times_{\overline{\PPP}^0_{K_0}}\overline{\PPP}^0_{K_0^lN})_{N\subset K_{0,l}}$, and $\pi_0(\tilde{Z}):=\lim\limits_{\longleftarrow}\pi_0(Z\times_{\overline{\PPP}^0_{K_0}}\overline{\PPP}^0_{K_0^lN}))$ is a profinite set. Now $Z$ is $\GG'(\QQ_l)$-stable if and only if $\tilde{Z}(k)$ is stable under the action of $\GG'(\QQ_l)$, and $\GG'(\QQ_l)$ acts transitively on $\pi_0(Z)$ if and only if it acts transitively on $\pi_0(\tilde{Z})$.

	Now we review some results on the Newton strata and Ekedahl-Oort strata. These two strata coincide in the Harris-Taylor case and in particular in our case $\overline{\PPP}_{K}$. There are several works on extending the EO-strata to toroidal and minimal compactifications, and we use the one on section 6 of Goldring and Koskivirta's paper \cite{Hasse}.
	\begin{defn}
		There exists a map
		\begin{align*}
		Nt:\overline{\PPP}_{K} \to Newt 
		\end{align*}
		such that for any $k$-point $x=(A,\lambda,i,\alpha)$ of $\overline{\PPP}_{K}$, $Nt(x)$ is the Newton polygon of $M(A[p^\infty])[1/p]$. Here $Newt$ is the set of all possible polygons, and $M(A[p^\infty])$ is the covariant Dieudonne module of $A[p^\infty]$. 
		The fibers of $Nt$ are locally closed subsets, and the Newton stratum attached to $\xi\in Newt$ is defined as $W_{\xi}=Nt^{-1}(\xi)$ with a reduced scheme structure. 
	\end{defn}
	
	There is a partial order on $Newt$: for any two Newton polygon $\xi_1,\xi_2 \in Newt$, $\xi_1<\xi_2$ if $\xi_1$ lies over $\xi_2$. The closure relation is $\overline{W}_{\xi}=\bigsqcup_{\zeta\leq\xi}W_{\zeta}$. The stratum corresponding to the maximal element is called the ordinary locus, and the one corresponding to the minimal one is called the basic locus.
	
	\begin{defn}
		There exists a smooth stack morphism given by the universal $G$-zip $\underline{I}$ 
		\begin{align*}
		\zeta_{K_0}: \overline{\PPP}_{K_0}\to G-Zip^{\mu}=[Z\backslash G].
		\end{align*}
		The Ekedahl-Oort stratum attached to $w\in \prescript{I}{}{W}$ (EO stratum for short) is defined as $\overline{\PPP}_{K_0,w}=(\zeta_{K_0})^{-1}([Z\backslash G_w])$, $\overline{\PPP}_{K_0,w}^*=(\zeta_{K_0})^{-1}([Z\backslash \overline{G}_w])$. $\prescript{I}{}{W}=W_I\backslash W$ 
	\end{defn}
	
	There is a partial order $"\prec"$ in $\prescript{I}{}{W}$ introduced by He in \cite{he2007}. And the closure relation in the EO strata is induced by $\overline{G}_{w}=\bigsqcup_{w'\preceq w} G_{w'}$.
	
	It is well known that the following proposition holds in the Harris-Taylor case and in particular our case, but does not hold in general.
	
	\begin{prop}\label{comparision}
		(1) There is an isomorphism of partially ordered sets between $(\prescript{I}{}{W}, \prec)$ and $(Newt,<)$, and both are totally ordered.\\
		(2) The Newton strata and the EO strata coincide in $\overline{\PPP}_K$. \\
		(3) The closure of each Newton stratum is smooth. \\
	\end{prop}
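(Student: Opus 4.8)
The plan is to exploit the splitting of $p$ to reduce all three assertions to the structure of a single $p$-divisible group of type $A$. Since $p=v\bar{v}$ splits, $\OO_E\otimes_{\ZZ}\ZZ_p\cong\ZZ_p\times\ZZ_p$, so for a $k$-point $x=(A,\lambda,i,\alpha)$ of $\overline{\PPP}_K$ we get a decomposition $A[p^{\infty}]=G_x\times G_x^{\vee}(1)$ with $G_x:=A[v^{\infty}]$ and $G_x^{\vee}(1)=A[\bar{v}^{\infty}]$ its Cartier dual up to twist via $\lambda$. The signature condition forces $G_x$ to have height $3$, dimension $2$ and trivial $\ZZ_p$-action, so that $Nt(x)$, the Ekedahl--Oort type of $x$, and the formal neighbourhood of $x$ in $\overline{\PPP}_K$ are all determined by $G_x$. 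Correspondingly $\GG_{\QQ_p}\cong GL_3(\QQ_p)\times\QQ_p^{\times}$ with $\mu$ the minuscule cocharacter of type $(1,1,0)$ on the $GL_3$-factor; hence $Newt=B(GL_3,\mu)$ and $\prescript{I}{}{W}\cong W_{GL_2}\backslash S_3$, and both of these sets have exactly three elements.

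I would prove (1) and (2) together. A direct comparison of polygons shows that $B(GL_3,\mu)$ consists of the three Newton polygons with slope multisets $\{0,1,1\}$, $\{\tfrac12,\tfrac12,1\}$ and $\{\tfrac23,\tfrac23,\tfrac23\}$, forming a chain $\xi_{\mathrm b}<\xi_2<\xi_{\mathrm{ord}}$ in the order of the paper ($\xi_{\mathrm{ord}}$ ordinary, $\xi_{\mathrm b}$ basic); in particular $(Newt,<)$ is totally ordered. Next, a $\mathrm{BT}_1$ group scheme over $\overline{\FF}_p$ of height $3$ and dimension $2$ is isomorphic to exactly one of $\mu_p^{\oplus2}\oplus\ZZ/p$, $\ G_{1/2}[p]\oplus\mu_p$, $\ G_{2/3}[p]$: these are pairwise non-isomorphic ($p$-ranks $1,0,0$, and in the last two $\mu_p$ respectively does and does not split off), and there are at most three of them since $\mathrm{BT}_1$'s of this type are classified by $\prescript{I}{}{W}$. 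Each of the three is the $p$-torsion of a minimal $p$-divisible group, namely $H_1=\mu_{p^{\infty}}^{\oplus2}\oplus\QQ_p/\ZZ_p$, $H_2=G_{1/2}\oplus\mu_{p^{\infty}}$ and $H_3=G_{2/3}$, with Newton polygons $\xi_{\mathrm{ord}},\xi_2,\xi_{\mathrm b}$ respectively. By Oort's theorem that a minimal $p$-divisible group over $\overline{\FF}_p$ is determined up to isomorphism by its $p$-torsion, $G_x[p]\cong H_i[p]$ forces $G_x\cong H_i$; hence the Ekedahl--Oort stratum of type $i$ equals $\{x:G_x\cong H_i\}=\{x:Nt(x)=\xi_i\}=W_{\xi_i}$, and since each of these strata is non-empty (e.g.\ because $\zeta_{K_0}$ is surjective) the two stratifications coincide, which is (2). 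The induced bijection of index sets is then an isomorphism of posets, because on both sides the closure relation reads $\overline{W}_{\eta}=\bigsqcup_{\eta'\le\eta}W_{\eta'}$; so $(\prescript{I}{}{W},\prec)\cong(Newt,<)$, a chain, which gives (1).

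For (3): $\overline{W}_{\xi_{\mathrm{ord}}}=\overline{\PPP}_K$ is the special fibre, smooth because $\PPP_K$ is smooth over $\OO_{E,(p)}$; $\overline{W}_{\xi_{\mathrm b}}=W_{\xi_{\mathrm b}}$ is $0$-dimensional and reduced, hence smooth; so the only substantive case is $\overline{W}_{\xi_2}=W_{\xi_2}\sqcup W_{\xi_{\mathrm b}}$, which the dimension formula for Newton strata shows to be purely $1$-dimensional. Its open dense part $W_{\xi_2}$ is an Ekedahl--Oort stratum, hence smooth, being the preimage of a smooth locally closed substack under the smooth morphism $\zeta_{K_0}$; so it remains to prove regularity at the finitely many points of $W_{\xi_{\mathrm b}}$. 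At such a point $x$, Serre--Tate and Grothendieck--Messing identify $\widehat{\OO}_{\overline{\PPP}_K,x}$ with the deformation ring of $G_{2/3}$ (with its trivial $\ZZ_p$-action), a regular local ring of dimension $2$, inside which $\overline{W}_{\xi_2}$ is the non-$\mu$-ordinary locus, i.e.\ the zero locus of the $\mu$-ordinary Hasse invariant of \cite{Hasse}. The main obstacle is precisely to show that this vanishing ideal is generated by a single element of the maximal ideal not lying in its square---equivalently, that the $\mu$-ordinary Hasse invariant is non-degenerate along the \emph{entire} non-ordinary locus, including the basic points---which I would settle by an explicit computation with the universal display of $G_{2/3}$. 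Granting this, $\widehat{\OO}_{\overline{W}_{\xi_2},x}$ is regular of dimension $1$, which completes (3).
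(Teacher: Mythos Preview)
The paper does not actually prove this proposition: it is simply asserted as ``well known'' in the Harris--Taylor case, with no argument or reference supplied. Your proposal therefore goes well beyond what the paper provides, and there is no ``paper's own proof'' to compare against.

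Your treatment of (1) and (2) is correct. The reduction via the splitting $p=v\bar v$ to the single $p$-divisible group $G_x=A[v^{\infty}]$ of height $3$ and dimension $2$, the explicit enumeration of the three elements of $B(GL_3,\mu)$ and of $\prescript{I}{}{W}\cong W_{GL_2}\backslash S_3$, and the appeal to Oort's theorem on minimal $p$-divisible groups to force $G_x\cong H_i$ from $G_x[p]\cong H_i[p]$ are all sound. Once you know that the three EO types map bijectively to the three Newton polygons and that each EO stratum sits inside the corresponding Newton stratum, the coincidence of the two partitions is automatic; your appeal to non-emptiness of the strata is harmless but not actually needed at that step.

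For (3), the two easy cases are handled cleanly, but you explicitly defer the heart of the middle case: that the $\mu$-ordinary Hasse invariant vanishes to order exactly one at each basic point, i.e.\ that it lies in $\mathfrak{m}_x\setminus\mathfrak{m}_x^2$ in the deformation ring of $G_{2/3}$. This is a genuine gap, and ``Granting this'' is not a proof. The computation can indeed be carried out by writing down the universal display of $G_{2/3}$ over $k[[t_1,t_2]]$ and reading off the Hasse section, and the answer comes out as you predict, but the calculation is not entirely routine and you have not supplied it. If you want a complete argument you must either perform that display computation explicitly or locate a reference establishing smoothness of the non-ordinary divisor in this specific split $GU(2,1)$ situation.
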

	
	  The result \cite[Thm 6.2.1]{Hasse} gives definitions of the extended EO-strata on toroidal and minimal compactifications.
	\begin{defn}
		The universal $G$-zip $\underline{I}$ of $\overline{\PPP}_{K_0}$ extends to a $\GG(\mathbb{A}_f^p)$-equivariant $G$-zip over $\overline{\PPP}_{K_0}^{tor}$. And the morphism $\zeta_{K_0}$ also extends to
		\begin{align*}
		\zeta_{K_0}^{tor}: \overline{\PPP}_{K_0}^{tor}\to G-Zip^{\mu}.
		\end{align*}
		Define $\overline{\PPP}_{K_0,w}^{tor}=(\zeta_{K_0}^{tor})^{-1}([Z\backslash G_w])$, $\overline{\PPP}_{K_0,w}^{tor,*}=(\zeta_{K_0}^{tor})^{-1}([Z\backslash \overline{G}_w])$ and $\overline{\PPP}_{K_0,w}^{min}=\pi(\overline{\PPP}_{K_0,w}^{tor})$, $\overline{\PPP}_{K_0,w}^{min,*}=\pi(\overline{\PPP}_{K_0,w}^{tor,*})$ for $w\in \prescript{I}{}{W}$. Here $\pi: \overline{\PPP}_{K_0}^{tor} \to \overline{\PPP}_{K_0}^{min}$ is the natural projection onto the minimal compactification.
	\end{defn}
	
	\begin{rmk}
		$\overline{\PPP}_{K_0,w}^{tor,*}$ might not be the Zariski closure of $\overline{\PPP}_{K_0,w}^{tor}$, and the analog of the closure relation is $\overline{\PPP}_{K_0,w}^{tor,*}=\bigsqcup_{w'\preceq w}\overline{\PPP}_{K_0,w'}^{tor}$.
	\end{rmk}
	
	\begin{prop}
		(1) The EO-strata $\overline{\PPP}_{K,w}$ (resp. $\overline{\PPP}_{K,w}^{tor}$) and $\overline{\PPP}_{K,w}^{*}$ (resp. $\overline{\PPP}_{K,w}^{tor,*}$) are of equal dimension of dimension $l(w)$\\
		(2) The stratum $\overline{\PPP}_{K_0,w}^{min}$ is an affine subvariety.\\
		(3) The stratum of dimension 0 $\overline{\PPP}_{K_0,e}^{tor}$ is equal to $\overline{\PPP}_{K_0,e}$.
	\end{prop}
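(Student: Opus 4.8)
The plan is to obtain all three statements from the theory of $G$-zips of Goldring--Koskivirta \cite{Hasse}, specialized to $\overline{\PPP}_{K_0}$; parts (1) and (2) are essentially formal consequences of that theory, and the only step requiring a genuine geometric argument is part (3), concerning the boundary.

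For part (1), I would start from the fact that $\zeta_{K_0}:\overline{\PPP}_{K_0}\to G-Zip^{\mu}$ is a smooth morphism and that its extension $\zeta_{K_0}^{tor}:\overline{\PPP}_{K_0}^{tor}\to G-Zip^{\mu}$ is again smooth (\cite{Hasse}), both of relative dimension $\dim\overline{\PPP}_{K_0}=2$. Since the ordinary locus is open and dense and corresponds to the unique element of $\prescript{I}{}{W}$ of length $2$, the standard dimension formula for the substacks of $G-Zip^{\mu}$ gives $\dim[Z\backslash G_w]=\dim[Z\backslash\overline{G}_w]=l(w)-2$; pulling back along the smooth morphisms $\zeta_{K_0}$ and $\zeta_{K_0}^{tor}$ then yields $\dim\overline{\PPP}_{K_0,w}=\dim\overline{\PPP}_{K_0,w}^{tor}=l(w)$. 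For the starred versions, the closure relations show that $\overline{\PPP}_{K_0,w}^{*}$ and $\overline{\PPP}_{K_0,w}^{tor,*}$ differ from the corresponding open strata only by pieces indexed by $w'\prec w$, and these satisfy $l(w')<l(w)$ because $\prec$ refines the length order; hence their dimension is also $l(w)$. Finally, for a general neat $K$ the EO stratifications are compatible with the (finite, hence dimension-preserving) transition maps $\overline{\PPP}_{K}\to\overline{\PPP}_{K_0}$ and $\overline{\PPP}_{K}^{tor}\to\overline{\PPP}_{K_0}^{tor}$, so none of these dimensions depend on $K$.

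Part (2) I would quote directly from \cite{Hasse}: the Hodge line bundle descends to an ample line bundle $\omega$ on the projective variety $\overline{\PPP}_{K_0}^{min}$, and loc. cit. constructs a generalized Hasse invariant, a section of a suitable power of $\omega$ on the projective subvariety $\overline{\PPP}_{K_0,w}^{min,*}$ whose non-vanishing locus is exactly $\overline{\PPP}_{K_0,w}^{min}$. The non-vanishing locus of a section of an ample line bundle on a projective scheme is affine, which gives (2). (For our surface one could in principle exhibit these sections explicitly, but there is no need.)

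For part (3), I would first note that by part (1) the set $\overline{\PPP}_{K_0,e}^{tor}$ has dimension $l(e)=0$, so it is a finite set of closed points and in particular closed in $\overline{\PPP}_{K_0}^{tor}$; since $\overline{\PPP}_{K_0,e}=\overline{\PPP}_{K_0,e}^{tor}\cap\overline{\PPP}_{K_0}$, it suffices to show that no point of the boundary $\partial\overline{\PPP}_{K_0}^{tor}=\overline{\PPP}_{K_0}^{tor}\setminus\overline{\PPP}_{K_0}$ has basic Newton polygon. At a boundary point $x$ the semi-abelian scheme $\AAA^{ext}_x$ is a nontrivial extension of an abelian variety by a torus $T$, and since a cusp of the Picard surface is the stabilizer of an isotropic $E$-line, $T$ has $\OO_E$-rank $1$; as $p$ splits, the $v$-part $T[v^{\infty}]$ is the multiplicative $p$-divisible group $\mu_{p^{\infty}}$. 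Hence the $F$-isocrystal computing $Nt(x)$ --- obtained from the log de Rham cohomology of $\AAA^{ext}_x$ underlying the extended $G$-zip of \cite{Hasse} --- has a multiplicative, slope-$1$ sub-isocrystal, so $Nt(x)$ has a slope-$1$ segment. Of the three Newton polygons occurring on $\overline{\PPP}_{K_0}$, namely the ordinary $(0,1,1)$, the intermediate $(1/2,1/2,1)$ and the basic $(2/3,2/3,2/3)$, only the first two have such a segment, so $x$ is non-basic and $\overline{\PPP}_{K_0,e}^{tor}=\overline{\PPP}_{K_0,e}$. The main obstacle is precisely this last point: one must identify, at a boundary point, the $F$-isocrystal attached to the extended $G$-zip of \cite{Hasse} with the (covariant) Dieudonné module of the $p$-divisible group of the semi-abelian scheme and isolate its multiplicative part. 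Once this compatibility is established, the rest of the proof is bookkeeping with lengths and closure relations.
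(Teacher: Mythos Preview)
Your proposal is correct and follows the same approach as the paper, which simply cites \cite[Sect.~6]{Hasse}: specifically Cor.~6.4.4 for (1), Prop.~6.3.1(c) for (2), and Lem.~6.4.1 for (3). The only difference is in presentation: for (3) the paper quotes \cite[Lem.~6.4.1]{Hasse} directly, whereas you unpack the argument in the Picard case by exhibiting the multiplicative sub-isocrystal coming from the toric part of $\AAA^{ext}$ at a boundary point and checking it is incompatible with the basic slope sequence $(2/3,2/3,2/3)$; this is essentially the content of that lemma specialized to the situation at hand, and the compatibility you flag as the main obstacle (identifying the extended $G$-zip at the boundary with the Dieudonn\'e module of the semi-abelian scheme) is precisely what Goldring--Koskivirta establish in order to prove their Lem.~6.4.1.
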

	\begin{proof}
		The proofs are contained in \cite[Sect. 6]{Hasse}: (1) is an application of \cite[Cor. 6.4.4]{Hasse}, (2) is \cite[Prop. 6.3.1(c)]{Hasse}, and (3) is \cite[Lem. 6.4.1]{Hasse}.
	\end{proof}
	
	\begin{prop}\label{surjectivity}
		(1) $\overline{\PPP}_{K_0,w}$ is open and dense in $\overline{\PPP}_{K_0,w}^{min}$.\\
		(2) For $w\neq e$ and $Q\in\pi_0(\overline{\PPP}_{K_0,w})$, there exist $w'$ and $P\in\pi_0(\overline{\PPP}_{K_0,w'})$, such that $l(w')=l(w)-1$ and $P \subset \overline{Q}$, the Zariski closure of $Q$ in $\overline{\PPP}_{K_0}$.
	\end{prop}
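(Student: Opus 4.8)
The plan is to deduce both parts from four ingredients already at hand: the smoothness of the Newton-stratum closures and the totality of the order on $\prescript{I}{}{W}$ (Proposition~\ref{comparision}), the affineness of the strata $\overline{\PPP}_{K_0,w}^{min}$, and the fact --- which I will check as the first step --- that the toroidal boundary $D=\overline{\PPP}_{K_0}^{tor}\setminus\overline{\PPP}_{K_0}$ lies in the unique top (ordinary) Ekedahl--Oort stratum. For the last point: along $D$ the degenerating semi-abelian scheme $\AAA^{ext}$ is an extension of a CM elliptic curve --- the abelian part attached to the $\QQ$-rank-one boundary of $\GG$ --- by a torus, and since $p=v\bar v$ splits in $E$ that elliptic curve is ordinary, so $\zeta_{K_0}^{tor}(D)\subseteq[Z\backslash G_{w_{\mathrm{top}}}]$.

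For part (1): if $w\neq w_{\mathrm{top}}$ then $\overline{\PPP}_{K_0,w}^{tor}\subseteq\overline{\PPP}_{K_0}$, on which $\pi$ restricts to an isomorphism, so $\overline{\PPP}_{K_0,w}^{min}=\pi(\overline{\PPP}_{K_0,w}^{tor})=\overline{\PPP}_{K_0,w}$ and there is nothing to prove; while $\overline{\PPP}_{K_0,w_{\mathrm{top}}}=\overline{\PPP}_{K_0,w_{\mathrm{top}}}^{tor}\cap\overline{\PPP}_{K_0}$ is the ordinary locus, i.e.\ the complement of the zero locus of the Hasse invariant, hence open and dense in $\overline{\PPP}_{K_0}$ and a fortiori dense in $\overline{\PPP}_{K_0,w_{\mathrm{top}}}^{min}$; it is open there because $\overline{\PPP}_{K_0,w_{\mathrm{top}}}^{min}\setminus\overline{\PPP}_{K_0,w_{\mathrm{top}}}\subseteq\overline{\PPP}_{K_0}^{min}\setminus\overline{\PPP}_{K_0}$ is the finite set of cusps.

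For part (2), fix a non-basic $w$, let $w'$ be its immediate predecessor (so $l(w')=l(w)-1$), and write $\overline{\overline{\PPP}_{K_0,w}}$ for the Zariski closure of $\overline{\PPP}_{K_0,w}$ in $\overline{\PPP}_{K_0}$. By the closure relation and totality of the order, $\overline{\overline{\PPP}_{K_0,w}}=\overline{\PPP}_{K_0,w}\sqcup\overline{\overline{\PPP}_{K_0,w'}}$; both closures are smooth by Proposition~\ref{comparision}(3), hence normal, hence disjoint unions of their connected components, which are smooth, irreducible, of pure dimension $l(w)$ resp.\ $l(w)-1$, and $\overline{\PPP}_{K_0,w}$ is open dense in $\overline{\overline{\PPP}_{K_0,w}}$. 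Given $Q\in\pi_0(\overline{\PPP}_{K_0,w})$, it is an open connected subset of $\overline{\overline{\PPP}_{K_0,w}}$, so its closure $\overline Q$ is an irreducible closed subset of a single connected component $C$ with $\dim\overline Q=l(w)=\dim C$, whence $\overline Q=C$; a short irreducibility argument gives $\overline Q\cap\overline{\PPP}_{K_0,w}=Q$, so $\partial Q:=\overline Q\setminus Q=\overline Q\cap\overline{\overline{\PPP}_{K_0,w'}}$. Since $\overline Q=C$ is clopen in $\overline{\overline{\PPP}_{K_0,w}}$, the set $\partial Q$ is clopen in $\overline{\overline{\PPP}_{K_0,w'}}$, i.e.\ a union of connected components of it; and $\partial Q\neq\emptyset$, for otherwise $Q=\overline Q$ would be a proper variety of dimension $l(w)\geq1$ --- closed in $\overline{\PPP}_{K_0}^{tor}$ when $w\neq w_{\mathrm{top}}$ since $\overline{\overline{\PPP}_{K_0,w}}$ avoids the ordinary boundary, and with $2$-dimensional projective closure in $\overline{\PPP}_{K_0}^{min}$ meeting the non-ordinary locus (by ampleness of the Hasse invariant) when $w=w_{\mathrm{top}}$ --- while simultaneously being a connected component of the affine variety $\overline{\PPP}_{K_0,w}^{min}$ of part (1), which is impossible. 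Picking any connected component $C'$ of $\overline{\overline{\PPP}_{K_0,w'}}$ inside $\partial Q$ and setting $P:=C'\cap\overline{\PPP}_{K_0,w'}$, this $P$ is the open dense $w'$-stratum of the irreducible $C'$, hence a connected component of $\overline{\PPP}_{K_0,w'}$ with $P\subseteq C'\subseteq\overline Q$ and $l(w')=l(w)-1$, as required.

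The genuinely geometric steps, which I expect to be the main obstacle, are the two inputs about the special fibre used above: that the toroidal boundary is ordinary --- this is exactly where the splitting of $p$ is needed, through the boundary abelian part being an ordinary CM elliptic curve --- and the nonemptiness $\partial Q\neq\emptyset$, that is, that no component of a positive-dimensional Newton stratum is closed in $\overline{\PPP}_{K_0}$; in the ordinary case the latter rests on the affineness of $\overline{\PPP}_{K_0,w_{\mathrm{top}}}^{min}$ from \cite{Hasse}, and one should be careful identifying $\overline Q\setminus Q$ with the complement of $Q$ in its projective closure near the cusps. Everything else is soft bookkeeping with clopen subsets of normal schemes.
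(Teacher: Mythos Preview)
Your proof is correct and takes a genuinely different, more geometric route than the paper's. The paper proves (1) by an induction on $l(w)$: assuming a minimal counterexample with a boundary component of dimension $l(w)$, it uses the descended strata Hasse invariant $h_w^{min}\in H^0(\overline{\PPP}_{K_0,w}^{min,*},\omega^{N_w})$ to produce a non-empty vanishing locus lying in a strictly lower stratum, contradicting minimality; part (2) then repeats this on the minimal-compactification closure $\overline{Q}^{min}$, again cutting dimension by one with $h_w^{min}$. You instead exploit two features special to the split Picard case: the toroidal boundary is ordinary (because the abelian part of the degeneration is an $\OO_E$-CM elliptic curve, ordinary at the split prime $p=v\bar v$), and Newton-stratum closures are smooth (Proposition~\ref{comparision}(3)). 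The first makes (1) trivial for non-ordinary $w$; the second forces $\partial Q$ to be a union of full connected components of $\overline{\overline{\PPP}_{K_0,w'}}$, so (2) reduces to $\partial Q\neq\emptyset$, which you extract from the affineness of $\overline{\PPP}_{K_0,w}^{min}$ versus properness. Your route is shorter here but does not generalise --- neither the ordinariness of the boundary nor the smoothness of stratum closures survive beyond the Harris--Taylor setting, which is precisely why the paper works with strata Hasse invariants instead. The one step you flag as delicate, that $\zeta_{K_0}^{tor}(D)$ lands in the top stratum, is indeed correct; a clean way to nail it down is that the extended Hasse invariant is nowhere zero on $D$ (the torus part contributes an isomorphism under Verschiebung, the elliptic part is ordinary), and in the totally ordered case the top stratum is exactly its non-vanishing locus.
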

	\begin{proof}
		For (1), it suffices to show $\dim \overline{\PPP}_{K_0,w}^{min} \cap D <\dim \overline{\PPP}_{K_0,w}=l(w)$, where $D=\overline{\PPP}_{K_0}^{min}-\overline{\PPP}_{K_0}$ is the boundary. If not, suppose that $w$ is the minimal length element that does not satisfy the condition. By (3) of the previous proposition, $w\neq e$ hence $l(w)>0$. As $\dim \overline{\PPP}_{K_0,w}^{min} \cap D =l(w)$, it contains an irreducible component $Y$ of dimension $l(w)$. Let $\overline{Y}$ denote the Zariski closure of $Y$ in $\overline{\PPP}_{K_0}^{min}$, then $\overline{Y}\subset \overline{\PPP}_{K_0,w}^{min,*}\cap D$. By (2) of the previous proposition, $Y$ is a positive dimensional affine subvariety in a projective variety, so $\partial Y=\overline{Y}\backslash Y$ is non-empty.
		
		As mentioned in \cite[Rmk. 6.3.2]{Hasse}, for PEL Shimura varieties of type A or C, the strata Hasse invariant in $\overline{\PPP}_{K_0,w}^{tor,*}$ descends to $\overline{\PPP}_{K_0,w}^{min,*}$. So, there exists
		$$
		h_w^{min}\in H^0(\overline{\PPP}_{K_0,w}^{min,*}, \omega^{N_w})
		$$ 
		whose nonvanishing locus is $\overline{\PPP}_{K_0,w}^{min}$, here $\omega$ is the hodge line bundle and $N_w$ a large integer. Pulling back to $\overline{Y}$, there exists $h_{\overline{Y}}\in H^0(\overline{Y},\omega|_{\overline{Y}}^{N_w})$, such that $\partial Y$ is the vanishing locus of it. Since $\partial Y$ is non-empty, we have $\dim \partial Y\geq l(w)-1$. The decomposition
		$$
		\partial Y=\bigsqcup\limits_{w'\prec w} (\partial Y\cap \overline{\PPP}_{K_0,w'}^{min})
		$$
		implies that there exists $w'\prec w$ such that 
		$$
		\dim D \cap\overline{\PPP}_{K_0,w'}^{min} \geq \dim \partial Y \cap \overline{\PPP}_{K_0,w'}^{min}\geq l(w)-1 \geq l(w').
		$$
		So $w'$ also does not satisfy the condition, and this contradicts the minimality of $l(w)$.
		
		For (2), we denote by $\overline{Q}^{min}$ the Zariski closure of $Q$ in $\overline{\PPP}_{K_0}^{min}$. Similarly, we have $\dim \overline{Q}^{min}\backslash Q=l(w)-1$, therefore, there exist a $w'$ and an irreducible component $P^{min}$ of $\overline{\PPP}_{K_0,w'}^{min}$, such that $\dim P^{min}=l(w')=l(w)-1$ and $ P^{min} \in \overline{Q}^{min}$. By (1) we know that $\dim P^{min}\cap D<\dim P^{min}$, so $P=P^{min}\cap \overline{\PPP}_{K_0}$ is what we want.
	\end{proof}

	Now we prove some results about connectedness and monodromy groups for some subvarieties, e.g. non-basic Newton stratum. We follow the idea of C.-L. Chai and F. Oort in \cite{chai2005monodromy} and \cite{2006Monodromy}.
	
	We fix an odd prime integer $l\neq p$ that splits in $E$, therefore $\GG_{\QQ_l}\cong (GL_3\times GL_1)_{\QQ_l}$ and $\GG'_{\QQ_l}\cong SL_{3,\QQ_l}$.
	\begin{lemma}\label{key lemma}
		Assume that $Z\subset\overline{\PPP}_{K_0}^0$ is a smooth locally closed subscheme. Choose $Z^0$ a connected component of $Z$ and $\bar{z}\in Z(k)$ a geometric point that lies above the genetric point of $Z^0$. Let $\rho_l:\pi_1^{\et}(Z^0,\bar{z}) \to \GG(\ZZ_l)={\rm Aut}(T_l\AAA_{\bar{z}},<,>)$ be the $l$-adic monodromy representation associated with $T_l(A_{Z^0})$, the Tate module of the universal abelian scheme over $Z^0$.
		If
		\begin{enumerate}
			\item $Z$ is $\GG'(\QQ_l)$-stable and $\GG'(\QQ_l)$ acts transitively on $\pi_0(Z)$ and
			\item ${\rm Im} \rho_l$ is not finite.
		\end{enumerate}
		then ${\rm Im}\rho_l=K^{'}_{0,l}$, and $Z$ is connected.
	\end{lemma}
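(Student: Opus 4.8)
The plan is to follow the Chai--Oort strategy for monodromy of Hecke-stable subvarieties. The two hypotheses together force the image of $\rho_l$ to be a closed subgroup of $K'_{0,l}$ that is both ``large'' (from the Hecke-transitivity) and not finite (second hypothesis), and the goal is to show it is everything.

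First I would set up the comparison between the monodromy and the Hecke structure. The key observation is that the pro-\'etale cover $\overline{\PPP}^0_{K_0^l}\to\overline{\PPP}^0_{K_0}$ has Galois group $K'_{0,l}=K_{0,l}\cap\GG'(\QQ_l)$, so pulling back along $Z^0\hookrightarrow\overline{\PPP}^0_{K_0}$ gives a pro-\'etale cover $\widetilde{Z^0}\to Z^0$ whose monodromy is exactly the image of $\pi_1^{\et}(Z^0,\bar z)$ in $K'_{0,l}$ under $\rho_l$ (up to the inclusion $\GG'(\QQ_l)\subset\GG(\QQ_l)$ coming from the fact that the universal abelian scheme over $Z^0\subset\overline{\PPP}^0_{K_0}$ has $l$-adic monodromy landing in $\GG'$, since the similitude / $\nu$-character is constant on a connected component). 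Hypothesis (1) says $\widetilde Z:=Z\times_{\overline{\PPP}^0_{K_0}}\overline{\PPP}^0_{K_0^l}$ is $\GG'(\QQ_l)$-stable and that $\GG'(\QQ_l)$ acts transitively on $\pi_0(\widetilde Z)$; translating, the set of connected components of $\widetilde{Z^0}$ is a transitive $H$-set where $H:={\rm Im}\rho_l\subset K'_{0,l}$, and moreover $\GG'(\QQ_l)$ permutes the $\pi_0$ transitively. The first payoff: if $H=K'_{0,l}$ then $\widetilde{Z^0}$ is connected, which combined with transitivity of $\GG'(\QQ_l)$ on $\pi_0(\widetilde Z)$ and the fact that $\GG'(\QQ_l)$ is generated by elements of $K'_{0,l}$-conjugates (since $l$ splits, $\GG'_{\QQ_l}\cong SL_{3,\QQ_l}$ and $SL_3(\ZZ_l)$ together with the Hecke translates generates everything) forces $Z$ to be connected. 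So \emph{connectedness of $Z$ will follow once we prove ${\rm Im}\rho_l=K'_{0,l}$}, and the two parts of the lemma are really one statement.

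Next, the heart: showing $H={\rm Im}\rho_l$ equals $K'_{0,l}\cong$ (a conjugate of) $SL_3(\ZZ_l)$. Here I would argue at two levels. \emph{(a) The $\QQ_l$-Zariski closure of $H$ is all of $SL_{3,\QQ_l}$.} The Hecke-stability hypothesis makes $\widetilde Z\to Z$ a $\GG'(\QQ_l)$-equivariant tower, so the closure $\overline H$ (Zariski closure in $SL_{3,\QQ_l}$, or rather the algebraic monodromy group) is normalized by a Zariski-dense subgroup of $SL_3$; being a normal algebraic subgroup of the almost-simple group $SL_3$, it is either finite (hence trivial, as $SL_3$ has no nontrivial finite normal subgroups other than the center $\mu_3$) or all of $SL_3$. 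Hypothesis (2) — ${\rm Im}\rho_l$ infinite — rules out the finite case, so $\overline H=SL_{3,\QQ_l}$. Strictly, to invoke that $\overline H$ is normalized by the Hecke action one uses that the Hecke correspondences act by algebraic isogenies on the universal abelian scheme, conjugating the monodromy group; this is the Chai--Oort argument and I would cite/adapt it. \emph{(b) From Zariski-density to full integral image.} Since $H$ is a closed subgroup of $SL_3(\ZZ_l)$ that is Zariski-dense in $SL_{3,\QQ_l}$ and $l\neq 2,3$ is large enough (in particular $l>3$, so $SL_3(\ZZ_l)$ has no small-index subgroups of the problematic kind and $SL_3(\FF_l)$ is quasi-simple with trivial Schur multiplier issues for our purposes), a standard argument — reduce mod $l$, use that a Zariski-dense subgroup of $SL_3(\ZZ_l)$ surjects onto $SL_3(\FF_l)$ (Nori/Matthews--Vaserstein--Weisfeiler type, or for $SL_n$ directly), then a pro-$l$ lifting argument using that $\mathfrak{sl}_3$ is a perfect Lie algebra — upgrades $H$ to all of $SL_3(\ZZ_l)$. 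Concretely one shows $H$ surjects onto $SL_3(\ZZ/l^n)$ for all $n$ by induction, the kernel of $SL_3(\ZZ/l^{n+1})\to SL_3(\ZZ/l^n)$ being $\mathfrak{sl}_3(\FF_l)$ as an $SL_3(\FF_l)$-module with no invariants.

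\textbf{Main obstacle.} I expect step (a), establishing that the Hecke action genuinely normalizes the algebraic monodromy group, to be the delicate point: one must check that a prime-to-$p$ Hecke correspondence, although only a correspondence and not a morphism on $Z$ itself, nonetheless induces an isomorphism of the relevant local systems (Tate modules) after passing to the tower $\widetilde Z$, so that conjugation makes sense and preserves $\overline H$. This is exactly where the transitivity-of-$\GG'(\QQ_l)$-on-$\pi_0$ hypothesis and the compatibility of $\rho_l$ with the Galois action of $K'_{0,l}$ get used in tandem; the Siegel case is \cite{chai2005monodromy} and the Hodge-type generalization is \cite{van2024mod, van2024ordinary}, and the work in our setting is to verify their input hypotheses for $\overline{\PPP}_{K_0}$, which is Harris--Taylor type so that Newton and EO strata coincide and the closures are smooth (Proposition \ref{comparision}). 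The remaining steps (b) and the deduction of connectedness are essentially formal once (a) is in place.
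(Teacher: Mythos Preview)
Your overall Chai--Oort strategy and step (a) are broadly on target, but step (b) contains a genuine error and misses the key mechanism. The claim that a Zariski-dense closed subgroup of $SL_3(\ZZ_l)$ surjects onto $SL_3(\FF_l)$ is false: any principal congruence subgroup $\ker\bigl(SL_3(\ZZ_l)\to SL_3(\ZZ/l^n)\bigr)$ is closed, Zariski-dense, and maps trivially mod $l$. So no amount of Nori--MVW or pro-$l$ lifting will upgrade ``Zariski closure $=SL_3$'' to ``$M=K'_{0,l}$'' using only the internal group theory of $SL_3(\ZZ_l)$. At best such arguments show $M$ is \emph{open} in $K'_{0,l}$ (and indeed the paper gets this from $[\mathfrak h,\mathfrak h]\subset\mathfrak m$ once $\mathfrak h=\mathfrak{sl}_3$), but openness is strictly weaker than what you need, and your deduction of connectedness of $Z$ from $M=K'_{0,l}$ plus transitivity is also not justified as stated: transitivity on $\pi_0(\widetilde Z)$ does not by itself make $\pi_0(\widetilde Z)$ a singleton.

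The missing idea is to work not just with $M\subset K'_{0,l}$ but with the stabilizer $Q:=\mathrm{Stab}_{\GG'(\QQ_l)}(\widetilde{Z^0}^{\,0})$ inside the full group $\GG'(\QQ_l)$, so that $Q\cap K'_{0,l}=M$ and, by the transitivity hypothesis, $\GG'(\QQ_l)/Q\cong\pi_0(\widetilde Z)$ is profinite hence compact. One then shows $Q\subset N_{\GG'_{\QQ_l}}(H)(\QQ_l)$ (your ``Hecke normalizes monodromy'' intuition, made precise), so compactness of $\GG'(\QQ_l)/N_{\GG'_{\QQ_l}}(H)^0(\QQ_l)$ forces $N_{\GG'_{\QQ_l}}(H)^0$ to be parabolic (Borel--Tits); combined with reductivity of this normalizer (ultimately from Deligne's semisimplicity of geometric monodromy) one gets $N_{\GG'_{\QQ_l}}(H)^0=\GG'_{\QQ_l}$, whence $H=\GG'_{\QQ_l}$ and $M$ is open. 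The final step, which replaces your step (b), is that openness of $M$ makes $\pi_0(\widetilde Z)\cong\GG'(\QQ_l)/Q$ \emph{finite}, and since $\GG'(\QQ_l)\cong SL_3(\QQ_l)$ has no proper finite-index subgroups (its root subgroups $\cong\QQ_l$ are divisible), $Q=\GG'(\QQ_l)$; hence $M=Q\cap K'_{0,l}=K'_{0,l}$ and $\pi_0(\widetilde Z)=1$, giving both conclusions at once. The transitivity hypothesis is thus used essentially at the very end, not merely to set the stage.
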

	
	We apply this lemma to $Z_{\xi}$ and $\overline{Z_{\xi}}$ and prove the following theorem.
	
	\begin{thm}\label{connected}
		For any non-basic $\xi \in Newt$, we have $Z_{\xi}=W_{\xi} \cap \overline{\PPP}_{K_0}^{0}$ is connected, hence irreducible as it is smooth. And for $\rho_l: \pi_1^{\et}(Z_{\xi},\bar{z}) \to \pi_1^{\et}(\overline{Z_{\xi}},\bar{z}) \to \GG(\ZZ_l)$, the images of both fundamental groups in $\GG(\ZZ_l)$ are $K'_{0,l}$.
	\end{thm}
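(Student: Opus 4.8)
The plan is to verify the two hypotheses of Lemma \ref{key lemma} for both $Z_{\xi}$ and its closure $\overline{Z_{\xi}}$ in $\overline{\PPP}_{K_0}^0$, and then to read off the conclusion. For the non-basic $\xi$ there are exactly two candidates (the ordinary stratum and the single intermediate non-basic stratum, since $Newt$ is totally ordered with three elements by Proposition \ref{comparision}), and both $W_{\xi}$ and $\overline{W_{\xi}}$ are smooth by Proposition \ref{comparision}(3), so Lemma \ref{key lemma} is applicable once we feed it the monodromy-nontriviality and Hecke-transitivity inputs.

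First I would check condition (1): $Z_{\xi}$ is $\GG'(\QQ_l)$-stable and $\GG'(\QQ_l)$ acts transitively on $\pi_0(Z_{\xi})$. Stability is immediate because the $l$-adic Hecke correspondences $\T_g$ do not change the underlying abelian variety up to prime-to-$l$ isogeny, hence preserve the $p$-divisible group and therefore the Newton (= Ekedahl–Oort) stratum; the same holds for the Zariski closure. For transitivity on $\pi_0$, I would use the surjectivity/descent input of Proposition \ref{surjectivity}(2): starting from any connected component $Q$ of $\overline{\PPP}_{K_0,w}$ one finds a component $P$ in the closure of $Q$ of length $l(w)-1$, and iterating down to the zero-dimensional stratum $\overline{\PPP}_{K_0,e}$ one reduces the transitivity statement for $Z_\xi$ to the transitivity of $\GG'(\QQ_l)$ on $\pi_0$ of the basic (zero-dimensional) locus. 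That last fact is exactly the statement that $\GG'(\QQ_l)$ acts transitively on $\pi_0(\overline{\PPP}^0_{K_0^l})$ reduced mod the level, i.e. the computation $Stab_{\GG(\QQ_l)}(\overline{\PPP}^0_{K_0^l})=\GG'(\QQ_l)$ already recorded in the excerpt via the similitude character $v$. This argument simultaneously handles $Z_\xi$ and $\overline{Z_\xi}$ because $\pi_0(\overline{Z_\xi})$ surjects onto... well, is a quotient of the same data, and the Hecke action is compatible with the closure.

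Next I would check condition (2): the image of $\rho_l$ is infinite. For this I would use that $\xi$ is \emph{non-basic}, so the $p$-divisible group of the universal abelian scheme over $W_\xi$ has a nontrivial \'etale part (indeed in the ordinary case it is ordinary, and in the intermediate case it has \'etale rank $1$), which forces the abelian schemes in the family to be genuinely varying and the $l$-adic monodromy to be large. Concretely, it suffices to exhibit a single complete curve (or even a Hecke orbit of one) inside $Z_\xi$ along which the Tate module is not isotrivial; alternatively one invokes that a finite monodromy image would make $T_l\AAA$ isotrivial after a finite cover, contradicting e.g. the positivity of the Kodaira–Spencer/Hodge bundle on $\overline{\PPP}_{K_0}$ restricted to the (positive-dimensional, by Proposition \ref{surjectivity}(2) and the dimension formula $\dim = l(w)$) stratum $Z_\xi$. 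I would phrase this via the standard fact that non-isotriviality of the abelian family on a positive-dimensional base implies infinite $\ell$-adic monodromy, using that $\GG'_{\QQ_l}\cong SL_{3,\QQ_l}$ has no infinite finite quotients relevant here; the key point is just ``the family is not isotrivial on $Z_\xi$,'' which follows because the Ekedahl–Oort/Hodge stratification is cut out by Hasse-type invariants that are non-constant there.

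Having verified (1) and (2) for $Z_\xi$, Lemma \ref{key lemma} gives that $Z_\xi$ is connected (hence irreducible, being smooth) and that ${\rm Im}\,\rho_l = K'_{0,l} = K_{0,l}\cap\GG'(\QQ_l)$. Applying the same lemma to $\overline{Z_\xi}$ — for which (1) holds by the same closure-compatibility of the Hecke action and (2) holds a fortiori since $Z_\xi$ is open dense in $\overline{Z_\xi}$, so the monodromy of $\overline{Z_\xi}$ receives the (infinite) monodromy of $Z_\xi$ as a subgroup — yields that $\overline{Z_\xi}$ is connected with the same monodromy image. Finally, the factorization $\pi_1^{\et}(Z_\xi,\bar z)\to\pi_1^{\et}(\overline{Z_\xi},\bar z)\to\GG(\ZZ_l)$ shows both images equal $K'_{0,l}$, since the first surjects onto the second's image and both are sandwiched as $K'_{0,l}$. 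The main obstacle I anticipate is the transitivity-on-$\pi_0$ step: one must carefully pass between the tower language ($\pi_0(\tilde Z)$ as a profinite $\GG'(\QQ_l)$-set) and the geometric descent provided by Proposition \ref{surjectivity}, making sure the inductive descent to the zero-dimensional stratum is Hecke-equivariant and that ``transitive at the bottom'' propagates back up — this is precisely where the Chai–Oort method is delicate, and where one uses the affineness of $\overline{\PPP}_{K_0,w}^{min}$ together with the strata Hasse invariants.
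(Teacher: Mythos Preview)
Your overall architecture matches the paper's: verify the two hypotheses of Lemma \ref{key lemma} for $Z_\xi$ (and then for $\overline{Z_\xi}$). But both verifications in your proposal contain genuine gaps.

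\medskip

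\textbf{Transitivity on the basic locus.} You correctly reduce condition (1) to transitivity of $\GG'(\QQ_l)$ on the zero-dimensional basic stratum via the Hecke-equivariant surjections $\pi_0(Z_\zeta)\to\pi_0(Z_\xi)$ coming from Proposition \ref{surjectivity}(2). But your justification for transitivity at the bottom is wrong: the computation $Stab_{\GG(\QQ_l)}(\overline{\PPP}^0_{K_0^l})=\GG'(\QQ_l)$ only identifies which Hecke operators preserve the chosen connected component of the \emph{whole} Shimura variety; it says nothing about how $\GG'(\QQ_l)$ permutes the finitely many basic points sitting inside that component. The paper instead uses the Rapoport--Zink style uniformization of the basic locus
\[
\overline{\PPP}_{K_0^l,e}(k)\ \cong\ I(\QQ)\backslash I(\mathbb{A}_f)/I_p^e K_0^{p,l},
\]
where $I$ is the relevant inner form of $\GG_\QQ$, and then applies strong approximation to the simply connected simple group $I'$ (noncompact at $l$) to see that $I(\QQ)\backslash I(\mathbb{A}_f^l)/I_p^e K_0^{p,l}$ is a singleton. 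This is the missing ingredient.

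\medskip

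\textbf{Infinite monodromy.} Your argument here is too vague, and the specific justifications you offer do not work. Saying ``Hasse-type invariants are non-constant on $Z_\xi$'' is circular: $Z_\xi$ is a single EO stratum, so the relevant strata Hasse invariant is nowhere-vanishing there. Invoking ``positivity of Kodaira--Spencer'' does not by itself rule out isotriviality on a proper subvariety. The paper's argument is sharper and uses the closure: assuming ${\rm Im}\,\rho_l$ finite, pass to a finite \'etale cover $C\to Z_\xi^0$ with trivial monodromy, apply Oort's theorem (Lemma \ref{oort}) to obtain a purely inseparable isogeny from a \emph{constant} abelian variety to $\AAA$ over (a cover of) $\overline{Z_\xi^0}$, and conclude that every fiber over $\overline{Z_\xi^0}$ has the same Newton polygon. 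This contradicts the surjectivity of $i_\xi^\sigma$, i.e.\ the fact that $\overline{Z_\xi^0}$ meets strictly lower Newton strata. You should invoke Lemma \ref{oort} and use the closure meeting lower strata, rather than Kodaira--Spencer heuristics on $Z_\xi$ itself.
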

	
	To prove Lemma \ref{key lemma}, we need several results on algebraic groups.
	
	\begin{lemma} \label{alggp}
		(1) Let $M={\rm Im}\rho_l\subset\GG(\QQ_l)$ and $H:=\overline{M}^0\subset\GG_{\QQ_l}$. Then $H$ is semisimple, hence $H\subset\GG'_{\QQ_l}$. \\
		(2) The identity component of the normalizer $N_{\GG'_{\QQ_l}}(H)$ is a reductive algebraic group. \\
		(3) Let $\mathfrak{m}={\rm Lie}M,\mathfrak{h}={\rm Lie}H$, then $[\mathfrak{h},\mathfrak{h}]\subset \mathfrak{m}$.\\
		(4) If $Q$ is a finite index subgroup in $\GG(\QQ_l)$, then $Q=\GG(\QQ_l)$.
	\end{lemma}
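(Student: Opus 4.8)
The four assertions consist of one geometric input concerning the $\ell$-adic monodromy of abelian schemes, part~(1), and three formal facts about reductive groups over $\QQ_l$ specialized to $\GG'_{\QQ_l}\cong SL_{3,\QQ_l}$, parts~(2)--(4). The plan is to prove (1) first, since it carries the only real content, and then to deduce (2)--(4).

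For~(1): the lisse $\QQ_l$-sheaf $V:=V_l\AAA=T_l\AAA\otimes\QQ_l$ on $Z^0$ is pure of weight $-1$, so by the semisimplicity of the $\ell$-adic monodromy of a family of abelian varieties over a normal base in characteristic $p$ (Zarhin--Faltings at the generic point of $Z^0$, or Deligne's Weil~II) the $\pi_1^{\et}(Z^0,\bar z)$-module $V_{\bar z}$ is semisimple; hence the Zariski closure $\overline{M}$ of $M$ in $\GG_{\QQ_l}$, and \emph{a fortiori} its identity component $H=\overline{M}^0$, is reductive. To rule out a nontrivial central torus in $H$, let $\chi\colon H\to\Gm$ be a character. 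By tensor generation for the faithful representation $V_{\bar z}$ of the reductive group $H$, $\chi$ is realized on a one-dimensional $H$-subrepresentation $W$ of some $V_{\bar z}^{\otimes a}\otimes(V_{\bar z}^{\vee})^{\otimes b}$; as $W$ is $H$-stable it is stable under the Zariski-dense subgroup $M$, hence under $\pi_1^{\et}(Z^0,\bar z)$, so $W$ is a rank-one lisse subsheaf of the pure sheaf $V^{\otimes a}\otimes(V^{\vee})^{\otimes b}$ and is therefore itself pure. Since a pure lisse sheaf of rank one on a variety over $\overline{\FF}_p$ has finite geometric monodromy (Deligne, Weil~II), $\chi(M)$ is finite; as $M$ is Zariski dense in the connected group $H$ this forces $\chi(H)=\overline{\chi(M)}^0=\{1\}$. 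So $H$ admits no nontrivial character, i.e. $H$ is semisimple, and a semisimple subgroup of $\GG_{\QQ_l}$ maps trivially to the torus $\GG_{\QQ_l}/\GG'_{\QQ_l}$; hence $H\subseteq\GG'_{\QQ_l}$.

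Parts~(2) and~(3) are then pure group theory. By~(1), $H$ is a connected semisimple subgroup of the reductive group $\GG'_{\QQ_l}$ in characteristic $0$. Part~(2) is the standard fact that in characteristic $0$ the normalizer of a reductive subgroup of a reductive group has reductive identity component; it follows from the reductivity of the centralizer $Z_{\GG'_{\QQ_l}}(H)$ (centralizers of reductive subgroups are reductive in characteristic $0$) together with the embedding $N_{\GG'_{\QQ_l}}(H)^0/Z_{\GG'_{\QQ_l}}(H)^0\hookrightarrow H/Z(H)$ of its quotient into the semisimple group $H/Z(H)$. For~(3): $\mathfrak m=\mathrm{Lie}\,M$ is $\mathrm{Ad}(M)$-stable, so the Zariski-closed subgroup $\{g\in H:\mathrm{Ad}(g)\mathfrak m=\mathfrak m\}$ contains the dense subgroup $M$, hence equals $H$; thus $\mathfrak m$ is an ideal of $\mathfrak h=\mathrm{Lie}\,H$, and it is nonzero because $M=\mathrm{Im}\,\rho_l$ is infinite by the second hypothesis of Lemma~\ref{key lemma}. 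By~(1) the group $H$ is semisimple with $H\subseteq\GG'_{\QQ_l}\cong SL_{3,\QQ_l}$; since the connected semisimple subgroups of $SL_3$ are, up to conjugacy, $SL_3$, the standard $SL_2$, and $PGL_2\cong SO_3$, all of which have \emph{simple} Lie algebra, $\mathfrak h$ is simple, so its only nonzero ideal is $\mathfrak h$ itself, giving $\mathfrak m=\mathfrak h\supseteq[\mathfrak h,\mathfrak h]$.

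For~(4): the relevant group is the derived group $\GG'(\QQ_l)\cong SL_3(\QQ_l)$, which is generated by its root subgroups $e_{ij}(\QQ_l)\cong(\QQ_l,+)$, and $(\QQ_l,+)$, being a $\QQ$-vector space, is divisible and so has no proper subgroup of finite index; hence the normal core of any finite-index subgroup $Q$ of $SL_3(\QQ_l)$ meets each $e_{ij}(\QQ_l)$ in all of it, so contains every root subgroup and equals $SL_3(\QQ_l)$, forcing $Q=\GG'(\QQ_l)$. I expect the delicate point of the lemma to be the torus-freeness of $H$ in~(1): the reductivity of $H$ is the usual semisimplicity of abelian-scheme monodromy, but excluding a central torus relies on the finer Weil~II statement that pure rank-one lisse sheaves over $\overline{\FF}_p$ have finite geometric monodromy, together with the bookkeeping---via the polarization and the $\OO_E$-action on $V_l\AAA$---needed to see that the determinant-type characters of $H$ genuinely arise from such sheaves.
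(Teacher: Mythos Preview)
Your proof is correct, and parts~(2) and~(4) match the paper's argument essentially verbatim (root subgroups are divisible, centralizers of reductive subgroups are reductive and the normalizer is built from $H$ and its centralizer). Parts~(1) and~(3), however, take genuinely different routes from the paper.

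For~(1), the paper simply cites Deligne's Weil~II (\cite[Cor.~1.3.9, Thm.~3.4.1]{weil2}): for a pure lisse sheaf the identity component of the geometric monodromy group is already \emph{semisimple}, full stop. Your two-step argument---first reductivity from semisimplicity of the representation, then killing the torus by realizing each character on a rank-one pure subsheaf and invoking finiteness of its geometric monodromy---is correct and more explicit, but the ``delicate bookkeeping'' you anticipate at the end is exactly what Deligne's theorem packages for you. One small caveat: your purity statements tacitly require a model of $Z^0$ over a finite field, which exists but which you do not mention.

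For~(3), the paper cites \cite[Cor.~7.9]{borel1991linear}: $\mathfrak h$ is the algebraic hull of $\mathfrak m$, and the algebraic hull of a Lie subalgebra in characteristic~$0$ differs from it only by something central, so $[\mathfrak h,\mathfrak h]\subseteq\mathfrak m$ in complete generality. Your route---show $\mathfrak m$ is an $\mathrm{Ad}(H)$-stable, hence an ideal, and then use the classification of connected semisimple subgroups of $SL_3$ to see that $\mathfrak h$ is simple---is valid but tailored to $\GG'_{\QQ_l}\cong SL_{3,\QQ_l}$; the paper's argument would survive replacing $SL_3$ by any semisimple group. Two minor points: you take the stabilizer of $\mathfrak m$ inside $H$ and say it contains $M$, but $M$ need not sit in $H$; take the stabilizer in $\overline{M}$ instead (or intersect $M$ with $H(\QQ_l)$). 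And importing ``$M$ infinite'' from Lemma~\ref{key lemma} is unnecessary here: if $M$ is finite then $\mathfrak h=0$ and the conclusion is vacuous.

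Finally, you are right to read~(4) as a statement about $\GG'(\QQ_l)$: that is what the paper's proof actually establishes and what the application in Lemma~\ref{key lemma} requires.
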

	\begin{proof}
		By \cite[Cor. 1.3.9]{weil2} and \cite[Thm. 3.4.1]{weil2}, $\overline{M}$ is an extension of a finite group and a semisimple, hence $H$ is semisimple. 
		
		For (2), we consider the $H$-action on $\GG'_{\QQ_l}$ through conjugation and use $\cite[Thm. A.8.12]{conrad2015pseudo}$ to conclude that the identity component of $(\GG'_{\QQ_l})^{H}=Z_{\GG'_{\QQ_l}}(H)$ is connected reductive. The $N_{\GG'_{\QQ_l}}(H)$-action on $H$ through conjugation gives a $\QQ_l$-homomorphism
		$$
		f: N_{\GG'_{\QQ_l}}(H) \to {\rm Aut}_{H/\QQ_l}.
		$$
        For the identity component, we have $f^0:N_{\GG'_{\QQ_l}}(H)^0 \to {\rm Aut}_{H/\QQ_l}^0=H^{ad}$ and $f^0|_H$ is the quotient map. As $({\rm ker}f^0)^0=Z_{\GG'_{\QQ_l}}(H)^0$ is reductive, we conclude that $N_{\GG'_{\QQ_l}}(H)^0=H\cdot Z_{\GG'_{\QQ_l}}(H)^0$ is reductive. 
		
		(3) follows directly from \cite[Cor. 7.9]{borel1991linear}, as $\mathfrak{h}$ is the smallest algebraic Lie subalgebra containing $\mathfrak{m}$ in ${\rm Lie}(\GG_{\QQ_l})$.
		
		For (4), we consider the root subgroups. Fix a maximal split torus $\mathbb{T}$ of $\GG_{\QQ_l}$. For any root $\alpha$, let $U_{\alpha}$ be the root subgroup attached to $\alpha$. By assumption $Q\cap U_{\alpha}(\QQ_l)$ is a finite index subgroup in $U_{\alpha}(\QQ_l)$. As $U_{\alpha}$ is isomorphic to $\mathbb{G}_a$, the group $U_{\alpha}(\QQ_l)$ is divisible, hence $Q\cap U_{\alpha}(\QQ_l)=U_{\alpha}(\QQ_l)$. As $\GG'_{\QQ_l}$ is a split semisimple group, it is generated by its root subgroups, then $Q=\GG(\QQ_l)$. 
	\end{proof}
	
	Now we can give the proof of the Lemma \ref{key lemma}.
	
	\begin{proof}[Proof of Lemma \ref{key lemma}]
		Let $Z^0$ be a connected component of $Z$ and $\widetilde{Z^0}$ the fiber product $Z^0\times_{\overline{\PPP}^0_{K_0}} {\overline{\PPP}^0_{K_0^l}}$. Let $\widetilde{Z^0}^0$ be an inverse system of connected components of $Z^0\times_{\overline{\PPP}^0_{K_0}}{\overline{\PPP}^0_{K_0^lN}}$, it forms a pro-{\etale} covering over $Z^0$ and gives an element of $\pi_0(\widetilde{Z^0})$. 
		$$
		\begin{tikzcd}
		\widetilde{Z^0}^0 \arrow[hook,r] \arrow[rd] & \widetilde{Z^0} \arrow[hook,r] \arrow[d]& \widetilde{Z} \arrow[hook,r] \arrow[d]     & \overline{\PPP}_{K^l_0}^0 \arrow[d] \\
		& Z^0	\arrow[hook,r]	& Z	\arrow[hook,r]	& \overline{\PPP}_{K_0}^0	.		
		\end{tikzcd}
		$$
		As the right two squares are cartesian, we have
		$$
		{\rm Aut}_{Z^0}(\widetilde{Z^0})={\rm Aut}_{Z}(\widetilde{Z})={\rm Aut}_{\overline{\PPP}_{K_0}^0}(\overline{\PPP}_{K^l_0}^0)=K'_{0,l}.
		$$
		By definition we also have ${\rm Aut}_{Z^0}(\widetilde{Z^0}^0)={\rm Im}\rho_l=M$. Let $Q$ denote the stabilizer subgroup of $\widetilde{Z^0}^0$ in $\GG'(\QQ_l)$. Since the $K'_{0,l}$-action factor through the $\GG'(\QQ_l)$-action given by $l$-adic Hecke correspondence, $Q\cap K'_{0,l}=M$.
		
		By assumption $\GG(\QQ_l)$ acts transitively on $\pi_0(\widetilde{Z})$, so the base point $\widetilde{Z^0}^0$ gives bijections
		$$
		K'_{0,l}/M \xrightarrow{\sim} \pi_0(\widetilde{Z^0}) ,\qquad \GG'(\QQ_l)/Q \xrightarrow{\sim} \pi_0(\widetilde{Z}) .
		$$
		One can use the same argument as \cite[Lemma 2.8]{chai2005monodromy} to prove that the above two maps are homeomorphisms. Again by transitivity the profinite set $\pi_0(\widetilde{Z})$ is non-canonically homeomorphic to a disjoint union of finite copies of $\pi_0(\widetilde{Z^0})$.
		
		We claim that $N_{\GG'_{\QQ_l}}(H)^0$ is a parabolic subgroup in $\GG'_{\QQ_l}$. For any $\gamma\in Q$, there exists a small open subgroup $U\subset K'_{0,l}$ such that ${\rm Ad}_{\gamma}U\subset K'_{0,l}$, and then ${\rm Ad}_{\gamma}(U\cap M)\subset K'_{0,l}\cap Q=M$. Taking Lie algebras we get ${\rm Ad}_{\gamma}(\mathfrak{m})\subset\mathfrak{m}$, thus ${\rm Ad}_{\gamma}(H)\subset H$. Therefore $Q\subset N_{\GG'_{\QQ_l}}(H)(\QQ_l)$. Since $\GG'(\QQ_l)/Q \cong \pi_0(\widetilde{Z})$ is profinite hence compact, we conclude that the coset space $\GG'(\QQ_l)/N_{\GG'_{\QQ_l}}(H)^0(\QQ_l)$ is compact. By \cite[Prop. 9.3]{borel1965}, $N_{\GG'_{\QQ_l}}(H)^0$ is a parabolic subgroup. 
		
		In the other hand, we have proved $N_{\GG'_{\QQ_l}}(H)^0$ is reductive in Lemma \ref{alggp}(2), so $N_{\GG'_{\QQ_l}}(H)^0=\GG'_{\QQ_l}$. It implies that $H$ is a normal subgroup of the simple group $\GG'_{\QQ_l}$. By assumption $M$ is not finite thus $H$ is nontrivial, so we have $H=\GG'_{\QQ_l}$. Now Lemma \ref{alggp}(3) implies that $\mathfrak{m}=\mathfrak{h}={\rm Lie}\GG_{\QQ_l}$, hence $M$ contains an open subgroup of $\GG'(\QQ_l)$. 
		
		As $K'_{0,l}$ is open compact, $\pi_0(\widetilde{Z^0})\cong K'_{0,l}/M$ is finite and so as $\pi_0(\widetilde{Z})$. Moreover $\pi_0(\widetilde{Z^0})=\pi_0(\widetilde{Z})=1$ by Lemma \ref{alggp}(4). Hence ${\rm Im}\rho_l=M=K'_{0,l}$, and $Z$ is connected. 
	\end{proof}
	
	We also need the following result of F. Oort.
	\begin{lemma} \label{oort}
		Suppose that $k$ is a perfect field, $K$ is a function field of dimension 1 over $k$, and $X$ is an abelian variety over $K$. Let $l\neq char(k)$ be a prime number, and suppose that 
		$$
		\rho_l: Gal(K^s/K)\to {\rm Aut}(T_l(X))
		$$ 
		has the property that $\rho_l(Gal(K^s/Kk^s))$ is commutative. Then there exists a separable extension $L \supset K$, an abelian variety $Y$ over the algebraic closure of $k$ in $L$, and a purely inseparable isogeny
		$$
		t: Y\otimes L \to X\otimes_K L.
		$$
	\end{lemma}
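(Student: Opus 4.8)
The plan is to force the geometric monodromy to be finite, trivialize it by a finite separable base change, and then read off isotriviality up to purely inseparable isogeny; I assume $\mathrm{char}(k)=p>0$, the characteristic-zero case being the classical isotriviality statement (with ``purely inseparable'' meaning ``isomorphism''). Since $k$ is perfect and $K$ has transcendence degree $1$, write $K=k(C)$ with $C/k$ a smooth curve, fix a polarization of $X$, and spread it out to a polarized abelian scheme $\mathcal{X}\to U$ over a dense open $U\subset C$. Replacing $k$ by a finitely generated, then by a finite, subfield over which $C$, $U$, $\mathcal{X}$ and $\rho_l$ are already defined, I may assume $k$ is a finite field, so that $K$ and all the finite extensions below are finitely generated over $\FF_p$ — the setting in which \cite{weil2} and Faltings' theorem apply. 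Then $T_l\mathcal{X}$ is a pure lisse sheaf on $U$, so by \cite[Cor. 1.3.9, Thm. 3.4.1]{weil2}, exactly as in the proof of Lemma \ref{alggp}, the identity component of the Zariski closure $G$ of $\rho_l(\mathrm{Gal}(K^s/Kk^s))$ is semisimple; being also commutative, this identity component is trivial, so $\rho_l(\mathrm{Gal}(K^s/Kk^s))$ is finite. A standard compatibility (independence-of-$l$) argument for the geometric monodromy of $H^1$ of an abelian scheme over a finite field then upgrades this to finiteness of the geometric monodromy for every prime $q\neq p$.

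Next, pick a finite separable $L_0/K$ trivializing all these geometric monodromy groups and let $k'$ be the algebraic closure of $k$ in $L_0$, a finite field; enlarging $k'$ by one further finite separable step, I may also arrange that the smooth proper model $C'$ of $L_0/k'$ has a $k'$-rational point and that the residual $\mathrm{Gal}(\overline{k'}/k')$-actions on the relevant torsion are trivial, so that after renaming $\mathrm{Gal}(L_0^{s}/L_0\overline{k'})$ acts trivially on $T_q X$ for all $q\neq p$. By N\'eron--Ogg--Shafarevich $X$ now has good reduction at every place of $L_0$, and its N\'eron model is an abelian scheme $\mathcal{X}'\to C'$ whose attached $q$-adic local system on $C'_{\overline{k'}}$ is constant for every $q\neq p$.

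It remains to produce the constant abelian variety and the isogeny; this is the crux. Pick a $k'$-point $c_0\in C'$ and put $Y=\mathcal{X}'_{c_0}$, an abelian variety over $k'$. Triviality of the geometric monodromy makes the specialization isomorphisms $T_q\mathcal{X}'_{\overline{\eta}}\xrightarrow{\sim}T_q Y$ compatible with the $\mathrm{Gal}(L_0^{s}/L_0)$-actions for every $q\neq p$, so the Tate conjecture for abelian varieties over the finitely generated field $L_0$ (Faltings, Zarhin, Mori) shows that $Y\otimes L_0$ and $X\otimes_K L_0$ are isogenous and have isomorphic $q$-adic Tate modules for all $q\neq p$. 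To turn this into a \emph{purely inseparable} isogeny $t:Y\otimes L\to X\otimes_K L$, after a further finite separable extension $L\supseteq L_0$, one must additionally know that $\mathcal{X}'[p^\infty]$ is, geometrically on $C'$, constant up to purely inseparable isogeny; granting that, the connected--\'etale sequence of the kernel of a suitable isogeny (which is defined over $L_0$) has no \'etale or multiplicative part once the prime-to-$p$ Tate modules and the $p$-divisible groups have been matched, leaving only an infinitesimal kernel, which is exactly a purely inseparable isogeny.

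I expect the $p$-divisible-group input to be the real obstacle: the hypothesis only concerns the prime-to-$p$ monodromy, so geometric constancy of $\mathcal{X}'[p^\infty]$ up to purely inseparable isogeny is not free and is presumably obtained by a Dieudonn\'e-module / Serre--Tate deformation argument along $C'$, using upper semicontinuity of the $p$-rank and the Newton polygon to reduce to the single point $c_0$, as in the treatment of Chai and Oort. It is precisely this positive-characteristic phenomenon — a family that is a Frobenius pullback of a constant one without being constant — that accounts for the word ``purely inseparable'' rather than ``isomorphism'' in the conclusion; keeping track of the finite separable base changes used along the way ($L_0$, the enlargements of $k'$, the final $L$) is the remaining bookkeeping. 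The argument overall follows Chai and Oort; cf. \cite{2006Monodromy}.
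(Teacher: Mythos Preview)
The paper does not give a proof; it simply cites \cite[Thm.~2.1]{Oort1974}. Your reconstruction via \cite{weil2} and the Tate conjecture is a reasonable modern route, and the reduction to finite geometric $l$-adic monodromy is essentially correct (though the descent to a finite base field deserves more care: one should use that the geometric monodromy over $\bar k$ is a normal subgroup of the geometric monodromy of the spread-out family over $\overline{\FF}_p$, so that Deligne's semisimplicity of the identity component transfers to the subgroup).

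The genuine gap is the step you yourself flag: you ``grant'' that $\mathcal{X}'[p^\infty]$ is geometrically constant up to purely inseparable isogeny, and without this you obtain at best an isogeny of $p$-power degree whose kernel can perfectly well have a nontrivial \'etale quotient. Matching all prime-to-$p$ Tate modules via Faltings says nothing about the $p$-divisible group, so this is not bookkeeping but the entire characteristic-$p$ content of the lemma---precisely what distinguishes ``purely inseparable isogeny'' from ``isomorphism'' in the conclusion. Deferring it to an unspecified Serre--Tate or Newton-stratum argument, or to \cite{2006Monodromy}, leaves the proposal as an outline rather than a proof. Note also that Oort's original 1974 argument necessarily differs in method from yours, since it predates both Weil~II and Faltings' theorem; so even as a reconstruction of the cited result, your sketch is following a different path and stopping short of the hard part.
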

	\begin{proof}
		\cite[Thm. 2.1]{Oort1974}.
	\end{proof}
	
	With these preparision, we can give the proof of Thm. \ref{connected}.
	
	\begin{proof}[Proof of Theorem \ref{connected}]
		It suffices to check that the two conditions in Lemma \ref{key lemma} hold for $Z_{\xi}:=W_{\xi} \cap \overline{\PPP}_{K_0}^{0}$, and then hold for $\overline{Z_{\xi}}$ automatically.
		
		For $\xi,\zeta\in Newt$, $\zeta<\xi$, we consider the map $i_{\xi}^{\zeta}:\pi_0(Z_{\zeta})\to \pi_0(Z_{\xi})$: $i_{\xi}^{\zeta}(P)=Q$ if and only if $P\subset \overline{Q}$. We have
		\begin{itemize}
			\item $i_{\xi}^{\zeta}$ is well-defined: For any $P\in \pi_0(Z_{\zeta})$, we have $P \subset Z_{\zeta}\subset \partial Z_{\xi}$. Then there exists $Q\in \pi_0(Z_{\xi})$ such that $\dim \partial Q\cap P=\dim P$, hence $P\subset \partial Q$ since $P$ is irreducible. The uniqueness of such $Q$ follows from the smoothness of $\overline{Z_{\xi}}$.
			\item $i_{\xi}^{\zeta}$ is Hecke-equivalent: By definition.
			\item $i_{\xi}^{\zeta}$ is surjective: By Prop. \ref{surjectivity} and Prop. \ref{comparision}.
		\end{itemize}
		We claim that $\GG'(\QQ_l)$ acts transitively on the basic locus $\pi_0(Z_{\sigma})$, hence it acts transitively on any $\pi_0(Z_{\xi})$. We have the following description of $\overline{\PPP}_{K,e}$
		$$
		\Theta_{K}: \overline{\PPP}_{K,e}(k) \cong I(\QQ) \backslash I(\mathbb{A}_f) / I_{p}^{e}K^{p}.
		$$
		Passing to inverse limit, we have
		$$
		\Theta_{K_0^l}: \overline{\PPP}_{K_0^l,e}(k) \cong I(\QQ) \backslash I(\mathbb{A}_f) / I_{p}^{e}K_0^{p,l}.
		$$
		The group $I$ is an inner form of $\GG_{\QQ}$ and $I_{p}^e$ is a maximal paraholic subgroup of $I(\QQ_p)=J_{b}(\QQ_p)$, we refer to \cite{terakado2022mass} for details. In our case
		$$
		I(\QQ_{v})=
		\left\{ \begin{array}{ll}
		GU(3,0) & v=\infty \\
		B^{\times}\times \QQ_p^{\times}   & v=p \\
		\GG(\QQ_v)   & v\neq p,\infty
		\end{array} \right.
		$$
		where $B$ is a division algebra over $\QQ_p$ of dimension 9 with ${\rm inv}B=1/3$, and $I_{p}^e=\OO_{B}^{\times}\times \ZZ_p^{\times}$. We denote by $I'$ the derived group of $I$ and recall the facts that $I'$ is simply connected and simple, $I'(\QQ_l)=\GG'(\QQ_l)$ is noncompact. Then by strong approximation theorem (cf. \cite[Thm. 7.12]{pr1994}), $I'(\QQ)$ is dense in $I'(\mathbb{A}^l))$, hence $I(\QQ)$ is dense in $I(\mathbb{A}_f^l)$. As $I_{p}^{e}K_0^{p,l}$ is open, $I(\QQ) \backslash I(\mathbb{A}_f^l) / I_{p}^{e}K_0^{p,l}$ is a singleton. Thus $I(\QQ) \backslash (I(\mathbb{A}_f^l)/I_{p}^{e}K^{p,l}\times I(\QQ_l))/ I(\QQ_l)$ is a singleton. Therefore $I(\QQ_l)=\GG(\QQ_l)$ acts transitively on $\overline{\PPP}_{K_0^l,e}(k)$, hence $\GG'(\QQ_l)$ acts transitively on $\overline{\PPP}_{K_0^l,e}\cap \overline{\PPP}_{K_0^l}^{0}=\widetilde{Z_{\sigma}}$.
		
		It remains to check the second condition in Lemma \ref{key lemma}. If ${\rm Im}\rho_l$ is finite, then there exists a finite {\etale} covering $p: \overline{C} \to \overline{Z_{\xi}^0}$, such that ${\rm Im}(\pi_1^{\et}(C,\overline{\eta}) \to \GG(\ZZ_l)$ is trivial, where $C=p^{-1}(Z_{\xi}^0)$, $\overline{\eta}$ is a geometric point with $p(\overline{\eta})=\overline{z})$, and the image is in the sense of the following diagram.
		
		$$
		\begin{tikzcd}
		\pi_1^{\et}(C,\overline{\eta}) \arrow[r] \arrow[d] & \pi_1^{\et}(\overline{C},\overline{\eta}) \arrow[d] & \\
		\pi_1^{\et}(Z_{\xi}^0, \overline{z}) \arrow[r] & \pi_1^{\et}(\overline{Z_{\xi}^0},\overline{z}) \arrow[r] & \GG(\ZZ_l).
		\end{tikzcd}
		$$
		
		Let $K=k(C)$ be the function field of $C$, and $\eta={\rm Spec}K\to C$ be the generic point of $C$. The image of
		$$
		Gal(K^s/K)=\pi_1^{\et}(\eta,\overline{\eta}) \to \pi_1^{\et}(C,\overline{\eta}) \to \GG(\ZZ_l)\to{\rm Aut}(T_l \AAA_{\eta})
		$$
		is trivial, then by Lemma \ref{oort} there exist a separable extension $L\supset K$, an abelian variety $Y$ over $k$, and a purely inseparable isogeny $Y_L\to \AAA_{\eta}\otimes_{K}L$. Let $\overline{C}_1$ be a finite {\etale} covering of $\overline{C}$ with function field $k(\overline{C}_1)=L$, and $\eta_1={\rm Spec}L\to \overline{C}_1$ be the generic fiber. As $Y_ {\overline{C}_1}$ and $\AAA_{\overline{C}_1}$ are Neron models of $Y_L$ and $\AAA_{\eta_1}$ respectively, the purely inseparable isogeny $Y_L\to \AAA_{\eta_1}$ extends to a purely inseparable isogeny $Y_{\overline{C}_1} \to \AAA_{\overline{C}_1}$. Thus ${\rm Im}(\overline{C}_1\to \overline{\PPP}_{K_0})=\overline{Z_{\xi}^0}$ lies in a single Newton stratum, which is impossible because of the surjectivity of $i_{\xi}^{\sigma}$.
	\end{proof}

\section{Rigid cohomology}
	In this section, we briefly recall the notion and results we need on the theory of rigid cohomology and refer to \cite{kiran2002} for details. Assume that $L$ is a finite extension of $\QQ_p$, $\OO_L$ the ring of integers, and $k_0$ the residue field. Formally, associated with a $k_0$-variety $X$ and an overconvergent $F$-isocrystal $\E$ (with respect to $L$), there exist rigid cohomology space $H^i_{rig}(X/L,\E)$ and rigid cohomology space with compact supports $H^i_{c,rig}(X/L,\E)$. These are finite dimensional $L$-vector spaces, which vanish for $i<0$ and $i>2\dim X$. If $X$ is smooth of pure dimension $n$, there is a canonical perfect pairing
	$$
	H^i_{rig}(X/L,\E)\times H^{2n-i}_{c,rig}(X/L,\E^{\vee})\to L(-n).
	$$
	For any closed subscheme $Z\subset X$, the rigid cohomology space with support in $Z$ is denoted by $H^i_{Z,rig}(X/L,\E)$. There is also a canonical perfect pairing
	$$
	H^i_{Z,rig}(X/L,\E)\times H^{2n-i}_{c,rig}(Z/L,\E^{\vee})\to L(-n).
	$$
	If further $Z$ is smooth of codimension $r$, we have a canonical Gysin isomorphism
	$$
	Gysin_{Z}: H^*_{rig}(Z/L,\E|_Z)\cong H^{*+2r}_{Z,rig}(X/L,\E)(r)
	$$
	where $(r)$ is the $r$-th Tate twist.
	
	Take $L=E_v=\QQ_p$ in our case. Let $\PPP_{K,\OO_{E_v}}^{tor}$ be the base change of $\PPP_{K}^{tor}$ via $R_1\hookrightarrow \OO_{E_v}$, $P_{K,E_v}^{tor}$ be its generic fiber, and $\overline{\PPP}_{K,\FF_p}^{tor}$ be the special fiber. We omit the subscript $\OO_{E_v}$, $E_v$, and $\FF_p$ when it is clear in this section. Let $X_{rig}$ (resp. $X_{an}$) denote the rigid space (resp. analytic space) associate with $X$. We have a natural specialization map $sp: P_{K,rig}^{tor} \to \overline{\PPP}_{K}^{tor}$.  For any locally closed subscheme $X\subset \overline{\PPP}_{K}^{tor}$, we write $]X[=sp^{-1}(X)$ for its tube in $P_{K,rig}^{tor}$. For any sheaf of abelian groups $\mathcal{F}$ defined on some strict neighborhood of $]X[$ in $]\overline{X}[$, and $j_{X}:X\to \overline{X}$ the open immersion, we put
	$$
	j_X^{\dagger}\mathcal{F}:=\lim\limits_{\longrightarrow} j_{V*}j_{V}^{*} \mathcal{F}
	$$
	where $V$ runs through a fundamental system of strict neighborhoods of $]X[$ in $]\overline{X}[$ on which $\mathcal{F}$ is defined, and $j_V:V\to ]\overline{X}[$ is the natural inclusion. Then For any overconvergent $F$-isocrystal $\E$ (with respect to $E_v$), the rigid cohomology is computed by
	$$
	R\Gamma_{rig}(X/E_v,\E):=R\Gamma(]\overline{X}[ ,j^{\dagger}_{X}DR^{\bullet}(\E)).
	$$
	To define rigid cohomology with supports, we define the functors
	\begin{align*}
	\underline{\Gamma}_{]X[}{\mathcal{F}}&:=\ker(j_X^{\dagger}\mathcal{F} \to i_*i^*\mathcal{F}) \\
	\underline{\Gamma}_{]Z[}^{\dagger}{\mathcal{F}}&:=\ker(j_X^{\dagger}\mathcal{F} \to j_{X-Z}^{\dagger}\mathcal{F})
	\end{align*}
	where $i:]\overline{X}-X[\to ]\overline{X[}$ is the canonical immersion, and $Z\subset X$ is a closed subscheme. Then the rigid cohomology with supports is computed by
	\begin{align*}
	R\Gamma_{c,rig}(X/E_v,\E)&:=R\Gamma(]\overline{X}[ , \underline{\Gamma}_{]X[}(DR^{\bullet}(\E))),\\
	R\Gamma_{Z,rig}(X/E_v,\E)&:=R\Gamma(]\overline{X}[ , \underline{\Gamma}_{]Z[}^{\dagger}(DR^{\bullet}(\E))).
	\end{align*}

	Let $\AAA$ denote the universal abelian variety over $\overline{P}_K$, and $H_{1}^{cris}(\AAA/\overline{\PPP}_K)$ denote the dual of relative crystalline cohomology. $H_{1}^{cris}(\AAA/\overline{\PPP}_K)$ is a $F$-crystal on the crystalline site $(\overline{\PPP}_K/\OO_{E_v})_{cris}$. As $P_{K,an}$ is a strict neighborhood of $]\overline{\PPP}_K[$, $H_{1}^{cris}(\AAA/\overline{\PPP}_K)$ gives an overconvergent $F$-isocrystal $\mathcal{D}$ with respect to $E_v$ on $\overline{\PPP}_K$. The $\OO_E$-action on $\AAA$ induces a decomposition $\mathcal{D}=\mathcal{D}_{\tau}\oplus\mathcal{D}_{\bar{\tau}}$. For a good weight $(\underline{k})=(k_1,k_2,k_3,0)$, $2\geq k_3 \geq k_2+3$, we put
	\begin{align}
	\mathcal{D}^{(\underline{k})}:= {\rm Sym}^{k_2-k_1} (\mathcal{D}_{\bar{\tau}}) \otimes {\rm Sym}^{k_3-k_2-3} (\wedge^2(\mathcal{D}_{\bar{\tau}})) \otimes ({\rm det} \mathcal{D}_{\bar{\tau}})^{\otimes -k_3+2}
	\end{align}
	By definition, $\mathcal{D}^{(\underline{k})}$ is exactly the rigid analytification of $\mathcal{F}^{(\underline{k})}_{E_v}$ as a sheaf on $P_{K,an}$. We still use $\mathcal{F}^{(\underline{k})}_{E_v}$ to denote its analytification. Then for any subscheme $X\subset \overline{\PPP}_K$, the rigid cohomology of $X$ with the coefficients in $\mathcal{D}^{(\underline{k})}$ is 
	$$
	R\Gamma_{rig}(X/E_v,\mathcal{D}^{(\underline{k})})=R\Gamma(]\overline{X}[,j^{\dagger}_{X}DR^{\bullet}(\mathcal{F}_{E_v}^{(\underline{k})}))
	$$
	
	Let $Y=\overline{\PPP}_K-\overline{\PPP}_K^{ord}$ denote the non-ordinary locus. Then there is a canonical distinguished triangle
	$$
	R\Gamma_{Y,rig}(\overline{\PPP}_K/E_v,\mathcal{D}^{(\underline{k})}) \to
	R\Gamma_{rig}(\overline{\PPP}_K/E_v,\mathcal{D}^{(\underline{k})}) \to
	R\Gamma_{rig}(\overline{\PPP}_K^{ord}/E_v,\mathcal{D}^{(\underline{k})}|_{\overline{\PPP}_K^{ord}}) \stackrel{+1}{\longrightarrow}
	$$
	and a canonical quasi-isomorphism called Gysin isomorphism
	$$
	Gysin_{Y}:R\Gamma_{rig}(Y/E_v,\mathcal{D}^{(\underline{k})}|_{Y}) \cong
	R\Gamma_{Y,rig}(\overline{\PPP}_K/E_v,\mathcal{D}^{(\underline{k})})[2](1).
	$$
	\begin{prop}
		There are canonical isomorphisms
		\begin{align*}
		H^*_{rig}(\overline{\PPP}_K/E_v, \mathcal{D}^{(\underline{k})}) \cong 
		&H^*_{dR}(P_{K}, (\mathcal{F}_{E_v}^{(\underline{k})},\nabla)) \cong
		\mathbb{H}^*(P_{K}^{tor}, DR^{\bullet}(\mathcal{F}_{E_v}^{(\underline{k})})^{can}) ,\\
		H^*_{c,rig}(\overline{\PPP}_K/E_v, \mathcal{D}^{(\underline{k})}) \cong 
		&H^*_{c,dR}(P_{K}, (\mathcal{F}_{E_v}^{(\underline{k})},\nabla)) \cong
		\mathbb{H}^*(P_{K}^{tor}, DR^{\bullet}(\mathcal{F}_{E_v}^{(\underline{k})})^{sub}).
		\end{align*}
	\end{prop}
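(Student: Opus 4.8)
The plan is to treat the two isomorphisms in each row as separate comparisons and then combine them. The rightmost isomorphism --- between the algebraic de Rham cohomology of the open variety $P_K$ with coefficients in $(\mathcal{F}^{(\underline{k})}_{E_v},\nabla)$ and the hypercohomology over the toroidal compactification of $DR^{\bullet}(\mathcal{F}^{(\underline{k})}_{E_v})^{can}$ (resp. $DR^{\bullet}(\mathcal{F}^{(\underline{k})}_{E_v})^{sub}$) --- is a statement in characteristic $0$, whereas the leftmost one is an instance of the comparison between rigid and algebraic de Rham cohomology. The identification of the underlying sheaves over the open locus is already recorded above ($\mathcal{D}^{(\underline{k})}$ being the analytification of $\mathcal{F}^{(\underline{k})}_{E_v}$), so the common input I would isolate first concerns the boundary $D=\PPP^{tor}_K\setminus\PPP_K$: since $\PPP^{tor}_{K}$ is proper and smooth over $\OO_{E_v}$ with $D$ a relative normal crossings divisor and the universal abelian scheme degenerates to a semiabelian scheme along $D$ (Faltings--Chai, \cite{toridal}, \cite{PEL}), the $F$-isocrystal $H^{cris}_1(\AAA/\overline{\PPP}_K)$ extends to a logarithmic $F$-isocrystal on $(\overline{\PPP}^{tor}_K,\overline{D})$ with \emph{nilpotent} residues. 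As $\mathcal{D}^{(\underline{k})}$ is built from $H^{cris}_1(\AAA/\overline{\PPP}_K)_{\bar{\tau}}$ by Schur functors, the same holds for $\mathcal{D}^{(\underline{k})}$; on the generic fibre this says that the Deligne canonical extension of $(\mathcal{F}^{(\underline{k})}_{E_v},\nabla)$ has nilpotent residues and is canonically identified, as a filtered module with logarithmic connection, with the analytification of the automorphic canonical extension of \cite{BGG}, the subcanonical extension corresponding to the twist by $\mathcal{I}(D)$.

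Granting this, for the rightmost isomorphism I would invoke the logarithmic comparison theorem: because $D$ is a normal crossings divisor and the residues of $\nabla$ on $(\mathcal{F}^{(\underline{k})}_{E_v})^{can}$ are nilpotent, the natural inclusion $(\mathcal{F}^{(\underline{k})}_{E_v})^{can}\otimes\Omega^{\bullet}_{P^{tor}_K}(\log D)\hookrightarrow Rj_*\big(\mathcal{F}^{(\underline{k})}_{E_v}\otimes\Omega^{\bullet}_{P_K}\big)$, with $j\colon P_K\hookrightarrow P^{tor}_K$, is a quasi-isomorphism (Deligne; see also the discussion in \cite{BGG}), giving $\mathbb{H}^*(P^{tor}_K,DR^{\bullet}(\mathcal{F}^{(\underline{k})}_{E_v})^{can})\cong H^*_{dR}(P_K,(\mathcal{F}^{(\underline{k})}_{E_v},\nabla))$. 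For the subcanonical version I would use the ``compact support'' counterpart --- that $(\mathcal{F}^{(\underline{k})}_{E_v})^{can}\otimes\mathcal{I}(D)\otimes\Omega^{\bullet}_{P^{tor}_K}(\log D)$ computes $H^*_{c,dR}(P_K,(\mathcal{F}^{(\underline{k})}_{E_v},\nabla))$ --- or else deduce it from the canonical case applied to the dual connection $(\mathcal{F}^{(\underline{k})}_{E_v})^{\vee}$, which up to a central twist is again of the form $\mathcal{F}^{(\underline{k}')}_{E_v}$, together with de Rham Poincar\'e duality pairing the canonical and subcanonical hypercohomologies.

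For the leftmost isomorphism I would unwind the definition of rigid cohomology recalled above. Since $\overline{\PPP}^{tor}_K$ is proper one has $]\overline{\PPP}^{tor}_K[\ =P^{tor}_{K,rig}$, and
$$
R\Gamma_{rig}(\overline{\PPP}_K/E_v,\mathcal{D}^{(\underline{k})})=R\Gamma\big(P^{tor}_{K,rig},j^{\dagger}_{\overline{\PPP}_K}DR^{\bullet}(\mathcal{F}^{(\underline{k})}_{E_v})\big)=\varinjlim_{V}R\Gamma\big(V,DR^{\bullet}(\mathcal{F}^{(\underline{k})}_{E_v})|_V\big),
$$
the colimit running over strict neighbourhoods $V$ of the tube $]\overline{\PPP}_K[$ inside the proper rigid space $P^{tor}_{K,rig}$. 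Because $\mathcal{D}^{(\underline{k})}$ extends to a logarithmic $F$-isocrystal on $(\overline{\PPP}^{tor}_K,\overline{D})$ with nilpotent residues, the comparison between logarithmic rigid cohomology and algebraic de Rham cohomology (Baldassarri--Chiarellotto, Shiho, and the rigid formalism recalled in \cite{kiran2002}) identifies this colimit with $H^*_{dR}(P_K,(\mathcal{F}^{(\underline{k})}_{E_v},\nabla))$: by rigid-analytic GAGA on the proper space $P^{tor}_{K,rig}$ the analytified log de Rham complex has the same hypercohomology as its algebraic model, and the colimit over strict neighbourhoods of $]\overline{\PPP}_K[$ recovers exactly the cohomology of the open part computed by the canonical extension. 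The compactly supported statement follows in the same way, replacing $j^{\dagger}_{\overline{\PPP}_K}$ by $\underline{\Gamma}_{]\overline{\PPP}_K[}$ and the canonical by the subcanonical extension, or again via rigid Poincar\'e duality compatibly with the previous paragraph. Finally, every step is compatible with the Frobenius coming from the $F$-isocrystal structure on $\mathcal{D}^{(\underline{k})}$, so the left-hand isomorphisms are $\varphi$-equivariant, as is used in Proposition \ref{injectivity}.

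The step I expect to be the main obstacle is the boundary bookkeeping isolated above: one must check carefully that the crystalline extension of $H^{cris}_1(\AAA)$ furnished by the log structure on the toroidal degeneration has nilpotent residues, and that after analytification and applying the Schur functors defining $\mathcal{D}^{(\underline{k})}$ it agrees with the automorphic canonical (resp. subcanonical) extension of \cite{BGG} as a filtered logarithmic $F$-isocrystal. This is a compatibility between the degeneration theory of semiabelian schemes and the automorphic-bundle formalism of \cite{BGG}; once it is pinned down, the two cited comparison theorems apply essentially verbatim and the rest is formal.
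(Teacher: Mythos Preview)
Your proposal is correct and follows essentially the same outline as the paper's proof, but with a somewhat different emphasis in the middle step. Both you and the paper invoke Baldassarri--Chiarellotto for the comparison $H^*_{rig}\cong H^*_{dR}$, and both deduce the compactly supported row from the first via Poincar\'e duality (the paper cites \cite[Lem.~4.11]{tx13} for the pairing between the canonical and subcanonical hypercohomologies). The difference lies in how the identification $H^*_{dR}(P_K,(\mathcal{F}^{(\underline{k})}_{E_v},\nabla))\cong\mathbb{H}^*(P_K^{tor},DR^{\bullet}(\mathcal{F}^{(\underline{k})}_{E_v})^{can})$ is obtained: you set it up as Deligne's logarithmic comparison theorem, for which you must first verify that the automorphic canonical extension agrees with the Deligne canonical extension (nilpotent residues, compatibility of filtrations), and you correctly flag this boundary bookkeeping as the main obstacle. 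The paper instead passes through the rigid picture: it observes that $j:P_{K,an}\hookrightarrow P^{tor}_{K,rig}$ is quasi-Stein, so $R^ij_*$ vanishes on coherent sheaves for $i>0$, and then cites \cite[Prop.~5.17]{BGG} for the fact that, after analytification, $DR^{\bullet}(\mathcal{F}^{(\underline{k})}_{E_v})^{can}\hookrightarrow j_*DR^{\bullet}(\mathcal{F}^{(\underline{k})}_{E_v})$ is a quasi-isomorphism. This packages exactly the compatibility you isolate as the obstacle into a single citation already available in the automorphic-bundle formalism, so the paper's route is shorter; your route is more self-contained but requires you to redo what \cite[Prop.~5.17]{BGG} already proves.
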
	
	\begin{proof}
		This proposition is well known. By \cite[Cor. 2.5]{BALDASSARRI199411} and \cite[Cor. 2.6]{BALDASSARRI199411}, we have
		$$
		H^*_{rig}(\overline{\PPP}_K/E_v, \mathcal{D}^{(\underline{k})}) \cong 
		H^*_{dR}(P_{K}, (\mathcal{F}_{E_v}^{(\underline{k})},\nabla)) \cong
		\mathbb{H}^*(P_{K,rig}, DR^{\bullet}(\mathcal{F}_{E_v}^{(\underline{k})})).
		$$
		As the natural embedding $j: P_{K,an} \hookrightarrow P^{tor}_{K,rig}$ is quasi-Stein, we have $R^ij_*\mathcal{F}=0$ for any coherent sheaf $\mathcal{F}$ and $i>0$, hence
		$$
		\mathbb{H}^*(P_{K,an}, DR^{\bullet}(\mathcal{F}_{E_v}^{(\underline{k})})) \cong \mathbb{H}^*(P_{K,rig}^{tor}, j_*DR^{\bullet}(\mathcal{F}_{E_v}^{(\underline{k})})).
		$$
		As explained in the proof of \cite[Prop. 5.17]{BGG}, after analytification, the natural embedding  $DR^{\bullet}(\mathcal{F}_{E_v}^{(\underline{k})})^{can} \hookrightarrow j_*DR^{\bullet}(\mathcal{F}_{E_v}^{(\underline{k})})$ is a quasi-isomorphism, hence
		$$
		\mathbb{H}^*(P_{K,rig}^{tor}, j_*DR^{\bullet}(\mathcal{F}_{E_v}^{(\underline{k})})) \cong \mathbb{H}^*(P_{K,rig}^{tor}, DR^{\bullet}(\mathcal{F}_{E_v}^{(\underline{k})})^{can}).
		$$
		
		The second row follows from the first row and Poincare duality. The duality of $\mathbb{H}^*(P_{K,rig}^{tor}, DR^{\bullet}(\mathcal{F}_{E_v}^{(\underline{k})})^{can})$ and $\mathbb{H}^{4-*}(P_{K,rig}^{tor}, DR^{\bullet}(\mathcal{F}_{E_v}^{(\underline{k}),\vee})^{sub})$ is explained, for example, in the proof of \cite[Lem. 4.11]{tx13}.
	\end{proof}
	
	\begin{defn}
		For any weight $(\underline{k})\in \ZZ^4$ that $k_2\geq k_1$, the space of overconvergent modular forms with coefficients in $E_v$ of level $K$ and weight $(\underline{k})$ is defined by
		$$
		S^{\dagger}_{(\underline{k})}(K,E_v):=H^0(P_{K,rig}^{tor}, j^{\dagger}_{\overline{\PPP}_K^{tor,ord}}\omega_{E_v}^{(\underline{k}),sub}),
		$$
		and for any extension $L/E_v$
		$$
		S^{\dagger}_{(\underline{k})}(K,L):=S^{\dagger}_{(\underline{k})}(K,E_v)\otimes_{E_v}L.
		$$
	\end{defn}
	
\section{Hecke actions and partial Frobenius}
	In this section, we recall the definition of prime-to-$p$ Hecke actions  and partial Frobenius on $\mathbb{H}^*(P^{tor}_{K,E_v}, j^{\dagger}_{\triangle}DR^{\bullet}(\mathcal{F}_{E_v}^{(\underline{k})})^{\Box})$ and $H^*(P_{K,rig}^{tor}, j^{\dagger}_{\triangle}\omega_{E_v}^{(\underline{k}),\Box})$, where $\triangle=\overline{\PPP}_K^{ord}$, $\overline{\PPP}_K^{tor,ord}$ or $\overline{\PPP}_K^{tor}$ and $\Box=sub$ or $can$.
	
	Let $\mathscr{H}(K^p,E_v)=E_v[K^p\backslash \GG(\AA_{f}^p)/K^p]$ denote the Hecke algebra. For any double coset $[K^p g K^p]$, we consider the following Hecke correspondence (here we choose suitable rational polyhedral cone decomposition data, we refer to \cite[Sect. 2]{toridal} for details):
	$$
	\begin{tikzcd}
	& \PPP_{K'}^{tor} \arrow{dl}[swap]{\pi_1=[1]^{tor}} \arrow{dr}{\pi_2=[g]^{tor}}& \\
	\PPP_{K}^{tor} &  & \PPP_{K}^{tor}
	\end{tikzcd}
	$$
	where $K'=K'^pK_p$ such that $K'^p=gK^pg^{-1}$, and $[g]^{tor}$ extends the natrual morphism $[g]:\PPP_{K'}^{tor}\to \PPP_{K}$ sending $(A,\lambda,i,\alpha)$ to $(A,\lambda,i,\alpha g)$.
	
	According to \cite[Thm. 2.15]{toridal}, we have canonical isomorphisms compatible with Hodge filtrations:
	\begin{align*}
		[g]^{tor,*} \mathcal{F}_{E_v}^{(\underline{k}),can} &\cong \mathcal{F}_{E_v}^{(\underline{k}),can} ,\\
		[g]^{tor,*} \omega_{E_v}^{(\underline{k}),can} &\cong \omega_{E_v}^{(\underline{k}),can} ,\\
		[g]^{tor,*} \OO_{P_K^{tor}}(-D) &\cong \OO_{P_{K'}^{tor}}(-D_{K'}).
	\end{align*}
	Hence, there are morphisms of abelian sheaves
	\begin{align*}
		\pi_2^*: \pi_2^{-1}\omega_{E_v}^{(\underline{k}),\Box} \to \omega_{E_v}^{(\underline{k}),\Box},	
	\end{align*}
	and morphisms of complexes
	\begin{align*}
		\pi_2^*:\pi_2^{-1}DR^{\bullet}(\mathcal{F}_{E_v}^{(\underline{k})})^{\Box} &\to DR^{\bullet}(\mathcal{F}_{E_v}^{(\underline{k})})^{\Box} ,\\
		\pi_2^*:\pi_2^{-1}BGG^{\bullet}(\mathcal{F}_{E_v}^{(\underline{k})})^{\Box} &\to BGG^{\bullet}(\mathcal{F}_{E_v}^{(\underline{k})})^{\Box}.
	\end{align*}
	which are compatible with Hodge filtrations and the natural quasi-isomorphic embedding $BGG^{\bullet}(\mathcal{F}_{E_v}^{(\underline{k})})^{\Box} \to  DR^{\bullet}(\mathcal{F}_{E_v}^{(\underline{k})})^{\Box}$
	
	By \cite[Lem. 3.3.12]{boxer2015torsion}, we have a natural trace map
	\begin{align*}
		Tr_{\pi_1}: \pi_{1,*} \OO_{P_{K'}^{tor}} \to \OO_{P_{K}^{tor}},
	\end{align*}
	and by \cite[Cor. 3.3.5]{boxer2015torsion}:
	\begin{align*}
		R^i\pi_1 \OO_{P_{K'}^{tor}}=0
	\end{align*}
	for $i>0$. Thus, by projection formula, we have
	\begin{align*}
		Tr_{\pi_1}&: R\pi_{1,*}DR^{\bullet}(\mathcal{F}_{E_v}^{(\underline{k})})^{\Box} \to DR^{\bullet}(\mathcal{F}_{E_v}^{(\underline{k})})^{\Box} ,\\
		Tr_{\pi_1}&: R\pi_{1,*}BGG^{\bullet}(\mathcal{F}_{E_v}^{(\underline{k})})^{\Box} \to BGG^{\bullet}(\mathcal{F}_{E_v}^{(\underline{k})})^{\Box} ,\\
		Tr_{\pi_1}&: R\pi_{1,*}\omega_{E_v}^{(\underline{k}),\Box} \to \omega_{E_v}^{(\underline{k}),\Box}	.
	\end{align*}
	
	Now we can describe the action of $[K^pgK^p]$ as:
	$$
	\begin{tikzcd}
		\mathbb{H}^2(P^{tor}_{K,E_v}, j^{\dagger}_{\triangle}DR^{\bullet}(\mathcal{F}_{E_v}^{(\underline{k})})^{\Box})
		\arrow{r}{\pi_2^*} \arrow{rd}[swap]{[K^pgK^p]}		
		& \mathbb{H}^2(P^{tor}_{K',E_v}, j^{\dagger}_{\triangle}DR^{\bullet}(\mathcal{F}_{E_v}^{(\underline{k})})^{\Box}) 
		\arrow{d}{Tr_{\pi_1}}\\	
		& \mathbb{H}^2(P^{tor}_{K,E_v}, j^{\dagger}_{\triangle}DR^{\bullet}(\mathcal{F}_{E_v}^{(\underline{k})})^{\Box}),
	\end{tikzcd}
	$$
	and
	$$
	\begin{tikzcd}
		H^0(P_{K,rig}^{tor}, j^{\dagger}_{\triangle}\omega_{E_v}^{(\underline{k}),\Box})
		\arrow{r}{\pi_2^*} \arrow{rd}[swap]{[K^pgK^p]}
		& H^0(P_{K',rig}^{tor}, j^{\dagger}_{\triangle}\omega_{E_v}^{(\underline{k}),\Box})
		\arrow{d}{Tr_{\pi_1}}\\
		& H^0(P_{K,rig}^{tor}, j^{\dagger}_{\triangle}\omega_{E_v}^{(\underline{k}),\Box}).
	\end{tikzcd}
	$$
	
	Then we define the partial Frobenius operators $Fr_{v}$ and $Fr_{\bar{v}}$ acting on the rigid cohomology space $H^*_{rig}(Z/E_v, \mathcal{D}^{(\underline{k})}|_{Z})$ (and similarly on $H^*_{c,rig}(Z/E_v, \mathcal{D}^{(\underline{k})}|_{Z})$) for $Z= \overline{\PPP}_{K}$, $\overline{\PPP}_{K}^{ord}$ or the non-ordinary locus $Y$, and consider the canonical subgroup of $\AAA^{ext}[p]$ in a strict neighborhood of $]\overline{\PPP}_K^{tor,ord}[$ to define the partial Frobenius on $\mathbb{H}^*(P^{tor}_{K,E_v}, j^{\dagger}_{\overline{\PPP}_{K}^{tor,ord}}DR^{\bullet}(\mathcal{F}_{E_v}^{(\underline{k})})^{\Box})$ and $H^*(P_{K,rig}^{tor}, j^{\dagger}_{\overline{\PPP}_{K}^{tor,ord}}\omega_{E_v}^{(\underline{k}),\Box})$.
	
	For any locally closed noetherian scheme $S/\FF_p$ and $S$-point $x=(A,\lambda,i,\alpha)$, we define $\phi_v(x)=(A'=A/Ker_v,\lambda',i',\alpha')$ where
	\begin{itemize}
		\item $Ker_v$ is the $v$-component of the kernal of the relative Frobenius $Fr_A:A\to A^{(p)}$. Let $\pi_{v}:A \to A/Ker_v$ denote the canonical isogeny.
		\item $i'$ is the $\OO_E$-action induced by $i$.
		\item $\lambda'$ is the polarization determined by the commutative diagram
		$$
			\begin{tikzcd}
				A' \arrow{r}{\check{\pi}_{v}} \arrow{d}{\cong}[swap]{\lambda'} & A \arrow{d}{\cong}[swap]{\lambda}	\\
				A'^{\vee} \arrow{r}{\pi_{v}^{\vee}}  & A^{\vee}
			\end{tikzcd}
		$$
		where $\check{\pi}_{v}$ is the unique map such that the composition
		\begin{tikzcd}
			A\arrow{r}{\pi_{v}} & A' \arrow{r}{\check{\pi}_{v}} & A
		\end{tikzcd}
		is the canonical quotient given by $A[v]$.
		\item $\alpha'$ is the one induced by $\alpha$ and $\pi_{v,*}: T^{(p)}A \to T^{(p)}A'$ the isomorphism of the prime-to-$p$ Tate modules.
	\end{itemize}

	Let $\phi_v:\overline{\PPP}_K\to\overline{\PPP}_K$ denote the induced endomorphism, which is finite flat of degree $p$ and $\phi(\overline{\PPP}_K^{ord})\subset \overline{\PPP}_K^{ord}$ and $\phi(Y)\subset Y$. For the universal abelian scheme $\AAA$ over $\overline{\PPP}_K$, we have an isogeny $\pi_{v}^*:\AAA \to \phi_v^*\AAA$, which induces an isomorphism of $F$-isocrystals:
	$$
		\pi_{v}^*: \phi_v^*\mathcal{D}^{(\underline{k})}\cong \mathcal{D}^{\underline{k}}.
	$$
	This gives rise to an operator $Fr_v$ acting on $H_{rig}^*(Z/E_v,\mathcal{D}^{(\underline{k})})$:
	$$
		\begin{tikzcd}
			H_{rig}^*(Z/E_v,\mathcal{D}^{(\underline{k})}|_{Z}) \arrow{r}{\phi_v^*} \arrow{rd}[swap]{Fr_v} &H_{rig}^*(Z/E_v,\phi_v^*\mathcal{D}^{(\underline{k})}|_{Z}) \arrow{d}{\pi_{v}^*}\\
			&H_{rig}^*(Z/E_v,\mathcal{D}^{(\underline{k})}|_{Z})
		\end{tikzcd}
	$$
	for $Z= \overline{\PPP}_{K}$, $\overline{\PPP}_{K}^{ord}$ or the non-ordinary locus $Y$. We can define the operator $Fr_{\bar{v}}$ in a similar way, and $Fr_vFr_{\bar{v}}=Fr$ is induced by $\phi_v\phi_{\bar{v}}=Fr_{\overline{\PPP}_K/\FF_p}$, the Frobenius endomorphism of $\overline{\PPP}_K$.
	
	Now we define the canonical subgroup of $\AAA^{ext}$, which lifts the kernal of Frobenius over a strict neighborhood of $]\overline{\PPP}_K^{tor,ord}[$. We apply \cite[Thm. 4]{fargues10} and use the same argument in \cite[Sect 5.2]{hernandez_2019} and \cite{Stroh2010} to deal with the boundary.
	
	Let $ha_{\tau}\in H^0(\overline{\PPP}_{K}^{tor}, \omega_{\tau}^{can, \otimes p-1})$ be the partial hasse invariant, whose non-vanishing locus is $\overline{\PPP}_K^{tor,ord}$, where $\omega_{\tau}=\det\pi_*\Omega^1_{\AAA/\overline{\PPP}_K,\tau}$. We define $ha_{\bar{\tau}}$ similarly and let $ha=ha_{\tau}\cdot ha_{\bar{\tau}}$. Let $\widetilde{ha_{\tau}} \in H^0(\PPP_{K}^{tor}, \omega_{\tau}^{can, \otimes p-1})$ be a lift of $ha_{\tau}$, and put
	\begin{align*}
		&Ha=(Ha_{\tau},Ha_{\bar{\tau}}): P_{K,rig}^{tor} \to [0,1]\times [0,1], \\
		Ha&(x)=({\rm inf}\{v_p(\widetilde{ha_{\tau}}(x)),1\}, {\rm inf}\{v_p(\widetilde{ha_{\bar{\tau}}}(x)),1\}).
	\end{align*}

	We define $P^{tor}_K(\epsilon)=Ha^{-1}([0,\epsilon_1)\times [0,\epsilon_2))$ for $\epsilon=(\epsilon_1,\epsilon_2)\in (0,1]\times(0,1]$. As $\epsilon$ tends to 0, $P^{tor}_K(\epsilon)$ forms a fundamental system of strict neighborhoods of $]\overline{\PPP}_K^{tor,ord}[$. We also denote by $P_K(\epsilon)=P^{tor}_K(\epsilon)\cap P_{K,an}$, it is an open dense subdomain of $P^{tor}_K(\epsilon)$. For $\epsilon$ small enough, one can define the canonical finite flat subgroup scheme $H\subset\AAA[p]$ over $P_K(\epsilon)$ by \cite[Thm. 4]{fargues10}, as $\AAA[p^{\infty}]$ is a Barsotti-Tate group of height 6, dimension 3.
	
	The problem is that $\AAA^{ext}$ is not a semi-abelian scheme of constant toric rank, and hence $\AAA^{ext}[p^\infty]$ is not finite flat over $P^{tor}_K(\epsilon)$. Based on the results in \cite[Sect. 3.1]{Stroh2010}, there exists an {\etale} covering $\overline{U}\to P^{tor}_{K,rig}$ together with {\etale} maps $pr_1,pr_2:\overline{R}\rightrightarrows \overline{U}$ such that
	$$
		P_{K,rig}^{tor}\simeq [\overline{U}/\overline{R}]
	$$
	and a Mumford 1-motive $M=[L\to \widetilde{G}]$ over $\overline{U}$ such that $M[p^n]=\AAA^{ext}[p^n]$. Here, $\widetilde{G}$ is a canonical semi-abelian scheme with an $\OO_E$-action of constant toric rank, thus $\widetilde{G}[p^n]$ is finite flat. Then we apply \cite[Thm. 4]{fargues10} to $\widetilde{G}[p]$, there exists a canonical finite flat subgroup scheme $H\subset\widetilde{G}[p]$ on $\overline{U}(\epsilon)=\overline{U}\times_{P_{K,rig}^{tor}}P_K^{tor}(\epsilon)$. The existence of this canonical subgroup that lies over $P_K(\epsilon)$ implies
	$$
		pr_1^*H = pr_2^*H
	$$
	over $R(\epsilon)=R\times_{P_{K,rig}}P_K(\epsilon)$. Over $\overline{R}$ we have the isomorphism 
	$$
		pr_1^*\AAA^{ext}\cong pr_2^*\AAA^{ext}
	$$
	by \cite[Thm. 3.1.5]{Stroh2010}. Thus, we have
	$$
		pr_1^*\AAA^{ext}/H\cong pr_2^*\AAA^{ext}/H
	$$
	over $R(\epsilon)$. As $\overline{R}$ is smooth and $R(\epsilon)$ is open dense in $R(\epsilon)$, the previous isomphism of semi-abelian schemes extends to an isomorphism over $\overline{R}(\epsilon)$ by \cite[Thm. 1.1.2]{Stroh2010}. Therefore, we see that $H$ descends to $P^{tor}_{K,rig}(\epsilon)$ by faithfully flat descent.
	
	The $\OO_E$-action on $H$ induces a decomposition $H=H_{v}\oplus H_{\bar{v}}$, and then the quotient isogeny $\pi_{v}: \AAA^{ext}\to \AAA^{ext}/H_v$ over $\PPP^{tor}_{K}(\epsilon)$ induces a finite flat map
	\begin{align*}
		\phi_v: \PPP^{tor}_{K}(\epsilon)\to \PPP^{tor}_{K}(\epsilon')
	\end{align*}
	such that $\phi_v^*\AAA^{ext}\cong\AAA^{ext}/H_v$ together with all induced structures. We also see that $\phi_v$ and $\pi_{v}$ induce maps of vector bundles.
	\begin{align*}
		\pi_{v}^*:\phi_v^* \mathcal{F}_{E_v}^{(\underline{k}),\Box} &\to \mathcal{F}_{E_v}^{(\underline{k}),\Box} \\
		\pi_{v}^*:\phi_v^* \omega_{E_v}^{(\underline{k}),\Box} &\to \omega_{E_v}^{(\underline{k}),\Box}
	\end{align*}
	over $P_{K}^{tor}(\epsilon)$, where $\Box$=$can$ or $sub$. And there is also a morphism of complexes:
	\begin{align*}
		\pi_{v}^*:\phi_v^* DR^{\bullet}(\mathcal{F}_{E_v}^{(\underline{k})})^{\Box} &\to DR^{\bullet}(\mathcal{F}_{E_v}^{(\underline{k})})^{\Box}.
	\end{align*}
	
	Now we can define the partial Frobenius operator $Fr_v$ for $\triangle=\overline{\PPP}_K^{ord}$, $\overline{\PPP}_K^{tor,ord}$ or $\overline{\PPP}_K^{tor}$ and $\Box=sub$ or $can$ as
	$$
	\begin{tikzcd}
		\mathbb{H}^*(P^{tor}_{K,E_v}, j^{\dagger}_{\triangle}DR^{\bullet}(\mathcal{F}_{E_v}^{(\underline{k})})^{\Box}) \arrow{rd}[swap]{Fr_v} \arrow{r}{\phi_{v}^*} &\mathbb{H}^*(P^{tor}_{K,E_v}, j^{\dagger}_{\triangle}\phi_v^*DR^{\bullet}(\mathcal{F}_{E_v}^{(\underline{k})})^{\Box}) \arrow{d}{\pi_{v}^*}\\
		&\mathbb{H}^*(P^{tor}_{K,E_v}, j^{\dagger}_{\triangle}DR^{\bullet}(\mathcal{F}_{E_v}^{(\underline{k})})^{\Box}),
	\end{tikzcd}
	$$
	and
	$$
	\begin{tikzcd}
	H^*(P_{K,rig}^{tor}, j^{\dagger}_{\triangle}\omega_{E_v}^{(\underline{k}),\Box}) \arrow{rd}[swap]{Fr_v} \arrow{r}{\phi_{v}^*} 
	&H^*(P_{K,rig}^{tor}, j^{\dagger}_{\triangle} \phi_v^*\omega_{E_v}^{(\underline{k}),\Box}) \arrow{d}{\pi_{v}^*},\\
	&H^*(P_{K,rig}^{tor}, j^{\dagger}_{\triangle}\omega_{E_v}^{(\underline{k}),\Box}),
	\end{tikzcd}
	$$
	and similarly $Fr_{\bar{v}}$.

\section{Main results}
    Now we are in the place to compute cohomology groups and to get our main results. For any cohomological weight $(\underline{k})$, let $\Theta: \omega_{E_v}^{(k_1,k_3-2,k_2+2,k_4),sub}\to \omega_{E_v}^{(k_1,k_2,k_3,k_4),sub}$ denote the differential operator in the BGG complex $BGG^{\bullet}(\mathcal{F}_{E_v}^{(\underline{k}),sub})$.
	\begin{prop} For any cohomological weight $(\underline{k})$, we have the commutative diagram
    $$
		\begin{tikzcd}
		S_{(\underline{k})}(K,L)  \arrow[d,hook] \arrow[r]
	    &  \dfrac{S^{\dagger}_{(\underline{k})}(K,L)}{\Theta(S^{\dagger}_{(k_1,k_3-2,k_2+2,0)}(K,L))} \arrow[d]
	    \\
        \mathbb{H}^2(P^{tor}_{K,L}, BGG^{\bullet}(\mathcal{F}_L^{(\underline{k})})^{sub})  \arrow{r} \arrow[d,"\cong"]
	    & \mathbb{H}^2(P^{tor}_{K,L}, j^{\dagger}_{\overline{\PPP}_K^{tor,ord}}BGG^{\bullet}(\mathcal{F}_L^{(\underline{k})})^{sub}) \arrow[d]
        \\
	    \mathbb{H}^2(P^{tor}_{K,L}, DR^{\bullet}(\mathcal{F}_L^{(\underline{k})})^{sub})  \arrow{r}
	    & \mathbb{H}^2(P^{tor}_{K,L}, j^{\dagger}_{\overline{\PPP}_K^{tor,ord}}DR^{\bullet}(\mathcal{F}_L^{(\underline{k})})^{sub}),
		\end{tikzcd}
    $$ where the horizontal maps are given by sheaf morphisms $\mathcal{F} \to j^{\dagger}_{\overline{\PPP}_K^{tor,ord}}\mathcal{F}$, the vertical maps from the first row to the second come from the definition of the cohomology of the complexes, and the vertical maps from the second row to the third come from the quasi isomorphism $BGG^{\bullet}(\mathcal{F}_L^{(\underline{k})})^{sub}\to DR^{\bullet}(\mathcal{F}_L^{(\underline{k})})^{sub}$.
	\end{prop}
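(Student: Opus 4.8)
The plan is to identify every vertex of the diagram with an explicit group and to exhibit every arrow as a canonical map --- an edge map of a stupid filtration, a functoriality map for the restriction $\mathcal F\to j^{\dagger}_{\overline{\PPP}_K^{tor,ord}}\mathcal F$, or the quasi-isomorphism of Proposition \ref{BGG} --- so that commutativity of the three squares becomes automatic. I would begin with the left-hand column. By Proposition \ref{BGG} the inclusion $BGG^{\bullet}(\mathcal{F}_L^{(\underline{k})})^{sub}\hookrightarrow DR^{\bullet}(\mathcal{F}_L^{(\underline{k})})^{sub}$ is a quasi-isomorphism, so it induces the bottom isomorphism $\mathbb{H}^2(P^{tor}_{K,L},BGG^{\bullet,sub})\xrightarrow{\ \sim\ }\mathbb{H}^2(P^{tor}_{K,L},DR^{\bullet,sub})$. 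For the upper arrow, recall that $BGG^{\bullet,sub}:=BGG^{\bullet}(\mathcal{F}_L^{(\underline{k})})^{sub}$ is the three-term complex $[\omega^{(k_2+1,k_3-2,k_1+1,k_4),sub}\to\omega^{(k_1,k_3-2,k_2+2,k_4),sub}\xrightarrow{\Theta}\omega^{(\underline{k}),sub}]$, whose stupid filtration has top step $\omega^{(\underline{k}),sub}[-2]$; the associated edge map is $S_{(\underline{k})}(K,L)=H^0(\omega^{(\underline{k}),sub})\to\mathbb{H}^2(BGG^{\bullet,sub})$. Since the quasi-isomorphism of Proposition \ref{BGG} is filtered (Lan--Polo, \cite{BGG}), this stupid filtration corresponds to the Hodge filtration on $\mathbb{H}^2(DR^{\bullet,sub})$ and $\omega^{(\underline{k}),sub}[-2]$ to its top step $Fil^{k_3}$. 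Degeneration at $E_1$ of the Hodge--de Rham spectral sequence of the subcanonical automorphic de Rham complex (which follows from degeneration for the canonical complex by Poincar\'e duality) then says exactly that this edge map is an isomorphism onto $Fil^{k_3}\mathbb{H}^2$, a subspace of $\mathbb{H}^2$; in particular it is injective, as the hook asserts.

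Next I would treat the right-hand column and the top row. The crucial input is that the tube $]\overline{\PPP}_K^{tor,ord}[$ has a fundamental system of \emph{quasi-Stein} strict neighborhoods, for which one may take the $P^{tor}_K(\epsilon)$ of Section 6; this reduces to the ordinary locus of the minimal compactification being affine --- it is the non-vanishing locus of a power of the Hasse invariant, a section of an ample bundle --- together with the boundary analysis via Stroh's $1$-motive recalled in Section 6. Consequently $H^q(P^{tor}_{K,rig},j^{\dagger}_{\overline{\PPP}_K^{tor,ord}}\omega^{(\mu),sub})=0$ for all $q>0$ and all weights $\mu$, so the hypercohomology of $j^{\dagger}_{\overline{\PPP}_K^{tor,ord}}BGG^{\bullet,sub}$ is computed by the complex of overconvergent global sections, whence
\[
\mathbb{H}^2\big(P^{tor}_{K,L},\,j^{\dagger}_{\overline{\PPP}_K^{tor,ord}}BGG^{\bullet,sub}\big)\;=\;\coker\big(\Theta\colon S^{\dagger}_{(k_1,k_3-2,k_2+2,k_4)}(K,L)\to S^{\dagger}_{(\underline{k})}(K,L)\big),
\]
and the fourth coordinate of an automorphic weight may be normalized to $0$ since it only changes the bundle by a Tate twist, i.e.\ by a trivial line bundle (the central-character normalization of the Remark in Section 3), giving the group in the statement. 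Under this description the upper-right vertical arrow is the edge map of the stupid filtration for $j^{\dagger}_{\overline{\PPP}_K^{tor,ord}}BGG^{\bullet,sub}$, which is exactly the quotient map; the two lower right-hand arrows are the map from the hypercohomology spectral sequence and the one induced by the quasi-isomorphism of Proposition \ref{BGG}; the horizontal arrows are all induced by $\mathcal F\to j^{\dagger}_{\overline{\PPP}_K^{tor,ord}}\mathcal F$; and the top horizontal arrow is the composite $S_{(\underline{k})}(K,L)\hookrightarrow S^{\dagger}_{(\underline{k})}(K,L)\twoheadrightarrow S^{\dagger}_{(\underline{k})}(K,L)/\Theta(\cdots)$.

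Finally, commutativity of the lower and middle squares is functoriality of hypercohomology together with the compatibility of the (filtered) quasi-isomorphism of Proposition \ref{BGG} with the restriction $\mathcal F\to j^{\dagger}_{\overline{\PPP}_K^{tor,ord}}\mathcal F$; commutativity of the top square is the naturality in the complex of the stupid-filtration edge map, which is precisely what makes the composite through $\mathbb{H}^2(BGG^{\bullet,sub})$ agree with the composite that first includes a cusp form among overconvergent forms and then passes to the quotient by $\Theta$. I expect the step that costs the real work to be the computation of $\mathbb{H}^2(j^{\dagger}_{\overline{\PPP}_K^{tor,ord}}BGG^{\bullet,sub})$, i.e.\ verifying that strict neighborhoods of the ordinary locus inside the \emph{toroidal} compactification are genuinely quasi-Stein so that higher coherent cohomology of $j^{\dagger}$ of an automorphic bundle vanishes; the input needed for injectivity of the left-hand hook --- degeneration of the Hodge--de Rham spectral sequence for the subcanonical complex --- is classical but should be invoked with care.
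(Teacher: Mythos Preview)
Your overall structure --- identifying the vertical arrows as edge maps of the stupid-filtration hypercohomology spectral sequence and deducing commutativity from naturality of these edge maps under the restriction $\mathcal{F}\to j^{\dagger}_{\overline{\PPP}_K^{tor,ord}}\mathcal{F}$ --- is correct and is exactly what the paper does. Where you diverge from the paper is in your treatment of the upper-right vertex, and there the detour does not work.

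You claim that the strict neighborhoods $P^{tor}_K(\epsilon)$ are quasi-Stein, so that $H^q(P^{tor}_{K,rig},j^{\dagger}\omega^{(\mu),sub})=0$ for $q>0$ and hence $\mathbb{H}^2(j^{\dagger}BGG^{\bullet,sub})$ equals $S^{\dagger}_{(\underline{k})}/\Theta(\cdots)$ on the nose. But the boundary divisor $D$ lies entirely inside the ordinary locus of $\overline{\PPP}_K^{tor}$ (the Hasse invariant of a semi-abelian scheme with nontrivial toric part does not vanish), so each $P^{tor}_K(\epsilon)$ contains the proper boundary curves of the Picard surface. A quasi-Stein rigid space has vanishing higher cohomology for \emph{every} coherent sheaf, which fails once you push forward a suitable line bundle from such a curve. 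Your proposed reduction to affineness of the ordinary locus in the minimal compactification does not go through because $\pi\colon\PPP^{tor}_K\to\PPP^{min}_K$ is not affine over the cusps, and the Stroh $1$-motive discussion in Section~6 is about extending the canonical subgroup, not about acyclicity.

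The paper sidesteps this entirely. It does not attempt to compute $\mathbb{H}^2(j^{\dagger}BGG^{\bullet,sub})$; it only needs the map into it. Writing ${}_{II}E$ for the stupid-filtration spectral sequence of $j^{\dagger}BGG^{\bullet,sub}$, one has ${}_{II}E_2^{2,0}=\coker\big(\Theta\colon S^{\dagger}_{(k_1,k_3-2,k_2+2,0)}\to S^{\dagger}_{(\underline{k})}\big)$ tautologically, and since the complex has only three terms the spectral sequence degenerates at $E_3$, giving a canonical edge map
\[
{}_{II}E_2^{2,0}\twoheadrightarrow{}_{II}E_3^{2,0}={}_{II}E_\infty^{2,0}=Fil^2\mathbb{H}^2\hookrightarrow\mathbb{H}^2.
\]
Whether the first surjection (killing the image of $d_2\colon{}_{II}E_2^{0,1}\to{}_{II}E_2^{2,0}$) is an isomorphism is irrelevant for the proposition. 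On the classical side the analogous spectral sequence ${}_{I}E$ degenerates at $E_1$ by \cite[Rmk.~5.24]{BGG}, so ${}_{I}E_2^{2,0}=Fil^2\mathbb{H}^2({}_{I}E)=S_{(\underline{k})}(K,L)$ already, and naturality of the passage ${}E_2^{2,0}\to Fil^2\mathbb{H}^2\hookrightarrow\mathbb{H}^2$ under $\mathcal{F}\to j^{\dagger}\mathcal{F}$ gives the commutativity of the top square, exactly as in your final paragraph. In short: drop the quasi-Stein claim and the assertion that the right vertical arrow is an isomorphism, and your argument becomes the paper's.
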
	
    
	\begin{proof}
        It suffices to show that the square consisting of the first two rows is commutative. Let $ _IE^{p,q}$ and $_{\II}E^{p,q}$ denote the spectral sequences defined by the cohomology of the complexes in the second row. We write $H^{\bullet}({ _{I}E})$ for the corresponding cohomology group with a filtration $Fil^{\bullet}$ such that
        $$
            Fil^{p}H^{p+q}( _{I}E)/Fil^{p+1}H^{p+q}( _{I}E)= {_{I}E_{\infty}^{p+q}},
        $$ and similarly for $H^{\bullet}({ _{\II}E})$.
        
        By \cite[Rmk. 5.24]{BGG}, the spectral sequence $ _IE^{p,q}$ degenerates on its first page and we have
        $$
            Fil^{2}H^{2}({ _{I}E})={ _{\II}E^{2,0}_{\infty}}= S_{(\underline{k})}(K,L).
        $$  
        Since the complex consists of three nonzero terms, we see that $_{\II}E^{p,q}$ degenerates on its third page and
        $$
            Fil^{2}H^{2}({ _{\II}E})={ _{\II}E^{2,0}_{\infty}}= { _{\II}E^{2,0}_{3}}= \coker( _{\II}E_{2}^{0,1}\longrightarrow {_{\II}E^{2,0}_{2}}).
        $$
        Therefore, we have the commutative diagram
        $$
            \begin{tikzcd}
                {_{I}E^{2,0}_{2}} \arrow[r] \arrow[d,equal]
                &{_{\II}E^{2,0}_{2}} \arrow[d] \\
                Fil^{2}H^{2}({ _{I}E}) \arrow[r] \arrow[d, hook]
                & Fil^{2}H^{2}({ _{\II}E})\arrow[d] \\
                H^{2}({ _{I}E}) \arrow[r]
                & H^{2}({ _{\II}E})
            \end{tikzcd} 
        $$ so the square consisting of the first two rows of the diagram in the proposition is commutative.
	\end{proof}
	
	We fix the isomorphisms $\iota_p:\overline{\QQ}_p\cong \CC$ and $\iota_l:\overline{\QQ}_l\cong\CC$. Let $\mathfrak{l}$ be the place corresponding to $E\hookrightarrow \CC\cong \overline{\QQ}_l$ via $\iota_l$, and $E_{\mathfrak{l}}$ be the completion of $E$ at $\mathfrak{l}$. Consider the dual of the universal abelian scheme $a:\AAA^{\vee} \to \PPP_{K}$ with $i:\OO_E\to {\rm End}(\AAA^{\vee})$ induced by the $\OO_E$-action of $\AAA$. We define $\mathcal{L}:=R^1 a_* E_{\mathfrak{l}}$, it is a lisse sheaf and admits a decomposition $\mathcal{L}=\mathcal{L}_{\tau} \oplus\mathcal{L}_{\bar{\tau}}$, where $\mathcal{L}_{\tau}$ is the direct summand on which $\OO_E$ acts via $\iota_l \circ \tau$. For any good weight $({\underline{k}})=(k_1,k_2,k_3,0)$ that $2\geq k_3 \geq k_2+3$, we define 
	\begin{align*}
	\mathcal{L}^{(\underline{k})}:= {\rm Sym}^{k_2-k_1} (\mathcal{L}_{\bar{\tau}}) \otimes {\rm Sym}^{k_3-k_2-3} (\wedge^2(\mathcal{L}_{\bar{\tau}})) \otimes ({\rm det} \mathcal{L}_{\bar{\tau}})^{\otimes -k_3+2}
	\end{align*}
	We have natural actions of $Fr_{\tau}$, $Fr_{\bar{\tau}}$, and the prime-to-$p$ Hecke algebra $\mathscr{H}(K^p, E_{\mathfrak{l}})$ on $H^*_{\et}(Y_{\overline{\FF}_p}, \mathcal{L}^{(\underline{k})})$ and $H^*_{c,\et}(Y_{\overline{\FF}_p}, \mathcal{L}^{(\underline{k})})$. Then we have the following comparison proposition.
	
	\begin{prop}
		We identify $\overline{\QQ}_p$ and $\overline{\QQ}_l$ with $\CC$ via isomorphisms $\iota_p$ and $\iota_l$. Then we have an equality in the Grothendieck groups of finite-dimensional $\mathscr{H}(K^p,\CC)[Fr_{\tau}, Fr_{\bar{\tau}}]$-modules:
		\begin{align}
		\sum_{i=0}^{2}(-1)^{i}[H^i_{c,rig}(Y/E_v,\mathcal{D}^{(\underline{k})}) \otimes_{E_v} \overline{\QQ}_p]=\sum_{i=0}^{2}(-1)^{i}[H^i_{c,\et}(Y_{\overline{\FF}_p},\mathcal{L}^{(\underline{k})}) \otimes_{E_{\mathfrak{l}}} \overline{\QQ}_l].
		\end{align}
		Moreover, we have
		\begin{align}
		[H^2_{c,rig}(Y/E_v,\mathcal{D}^{(\underline{k})}) \otimes_{E_v} \overline{\QQ}_p]=[H^2_{c,\et}(Y_{\overline{\FF}_p},\mathcal{L}^{(\underline{k})}) \otimes_{E_{\mathfrak{l}}} \overline{\QQ}_l]
		\end{align}
		and by Poincare duality
		\begin{align}
		[H^0_{rig}(Y/E_v,\mathcal{D}^{(\underline{k})}) \otimes_{E_v} \overline{\QQ}_p]=[H^0_{\et}(Y_{\overline{\FF}_p},\mathcal{L}^{(\underline{k})}) \otimes_{E_{\mathfrak{l}}} \overline{\QQ}_l].
		\end{align}
	\end{prop}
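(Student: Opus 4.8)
The plan is to deduce the identity from the Lefschetz--Verdier trace formula applied to both cohomology theories, the guiding principle being that $\mathcal{L}^{(\underline{k})}$ and $\mathcal{D}^{(\underline{k})}$ are the $\ell$-adic and $p$-adic realizations of one and the same construction performed on $H^1$ of the universal abelian scheme over $Y$, so that the fixed-point contributions on the two sides are forced to agree. The first step is a reduction to a comparison of trace functions: set $\mathscr{R}:=\mathscr{H}(K^p,\CC)[Fr_{\tau},Fr_{\bar{\tau}}]$, which is commutative, with the partial Frobenii commuting with every prime-to-$p$ Hecke operator and the product of two Hecke double-coset operators being a $\CC$-linear combination of Hecke double-coset operators; hence every element of $\mathscr{R}$ is a $\CC$-linear combination of operators $[K^pgK^p]\,Fr_{\tau}^{a}Fr_{\bar{\tau}}^{b}$ with $g\in\GG(\mathbb{A}_f^p)$ and $a,b\in\ZZ_{\ge 0}$. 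Since a finite-dimensional module over a $\CC$-algebra is determined in the Grothendieck group by its trace function (Brauer--Nesbitt), it suffices to prove that
\[
\sum_i(-1)^i\,\mathrm{tr}\bigl([K^pgK^p]\,Fr_{\tau}^{a}Fr_{\bar{\tau}}^{b}\bigm|H^i_{c,rig}(Y/E_v,\mathcal{D}^{(\underline{k})})\bigr)
=\sum_i(-1)^i\,\mathrm{tr}\bigl([K^pgK^p]\,Fr_{\tau}^{a}Fr_{\bar{\tau}}^{b}\bigm|H^i_{c,\et}(Y_{\overline{\FF}_p},\mathcal{L}^{(\underline{k})})\bigr)
\]
for all such $g,a,b$.

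Next I would evaluate each side by the relevant trace formula for correspondences: on the $\ell$-adic side Grothendieck's Lefschetz--Verdier formula in the form of the Deligne conjecture (Fujiwara, Varshavsky), and on the rigid side its analogue for overconvergent $F$-isocrystals due to \'Etesse--Le Stum, extended to correspondences. Because $\phi_v$ and $\phi_{\bar{v}}$ are partial Frobenii (finite flat, purely inseparable of degree $p$), the correspondence on $Y$ underlying $[K^pgK^p]\circ\phi_v^{a}\circ\phi_{\bar{v}}^{b}$ is contracting, so its fixed points are isolated and the formula holds with the naive local terms. At a fixed point corresponding to an abelian variety $A$ with its PEL and level structure, the local term is on both sides the trace of the induced Frobenius-type operator on the fibre of the coefficient object at $A$; this fibre is a fixed Schur-functor construction applied to $H^1(A)$, so the local term is a universal polynomial expression in the eigenvalues of Frobenius on $H^1(A)$. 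By Katz--Messing these eigenvalues agree whether computed $\ell$-adically or crystallinely, both yielding the characteristic polynomial of Frobenius on $A$, an element of $\ZZ[x]\subset\CC[x]$ independent of $\ell$ versus $p$; hence the local terms match term by term and the two alternating sums coincide, which is the first identity.

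For the second identity I would argue by weights. The coefficient systems $\mathcal{L}^{(\underline{k})}$ and $\mathcal{D}^{(\underline{k})}$, built by Schur functors from the weight-one object $H^1$ of the universal abelian scheme, are pure of the same weight $w$. Since $Y$ is smooth and equidimensional of dimension $1$ (it is the union of the closures of the codimension-one non-basic Newton stratum, which are smooth by Proposition~\ref{comparision}), Poincar\'e duality identifies $H^2_{c}(Y,-)$ with the dual of $H^0(Y,-^{\vee})$, and the latter is pure; thus $H^2_{c,rig}$ and $H^2_{c,\et}$ are both pure of weight $2+w$, whereas $H^1_c$ and $H^0_c$ have weights $\le 1+w$ and $\le w$ respectively by Weil~II and its rigid analogue due to Kedlaya. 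The partial-Frobenius bigrading is compatible with this filtration through $Fr_{\tau}Fr_{\bar{\tau}}=Fr$, so projecting the virtual $\mathscr{R}$-modules already matched in the previous step onto their top-weight component isolates $[H^2_{c,rig}(Y/E_v,\mathcal{D}^{(\underline{k})})\otimes\overline{\QQ}_p]$ on one side and $[H^2_{c,\et}(Y_{\overline{\FF}_p},\mathcal{L}^{(\underline{k})})\otimes\overline{\QQ}_l]$ on the other, giving the second identity. The third then follows from the Poincar\'e duality pairings of Section 5 applied to the dual coefficient system, compatibly with the Hecke action (replacing a double coset by its transpose) and with $Fr_{\tau},Fr_{\bar{\tau}}$ (up to inversion and a Tate twist).

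The step I expect to be the main obstacle is the rigid-cohomology input: one needs the Lefschetz--Verdier formula for overconvergent $F$-isocrystal coefficients on the possibly non-proper curve $Y$, equivariant for the prime-to-$p$ Hecke correspondences and the partial Frobenii and with the naive local terms, and one must verify that for $a\neq b$ the correspondence $[K^pgK^p]\circ\phi_v^{a}\circ\phi_{\bar{v}}^{b}$ is genuinely transversal on $Y$, which is no longer the classical Frobenius situation and relies on the geometry of $\phi_v$ along the Newton stratification.
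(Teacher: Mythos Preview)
Your overall architecture---reduce the Grothendieck-group identity to an equality of alternating traces, then isolate the top degree by a weight argument and dualize---matches the paper's. The execution of the trace comparison, however, is different, and your version has a gap that the paper's method is designed to avoid.

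The paper does not apply a Lefschetz--Verdier formula with coefficients on $Y$. Instead, following Saito, it passes to the Kuga--Sato variety $\AAA^{\vee,w}\to\overline{\PPP}_K$ and constructs an explicit $E$-linear combination $e^{(\underline{k})}$ of prime-to-$p$ endomorphisms and permutations in $S_w$ which, acting on $R^q a^w_*$, cuts out exactly $\mathcal{L}^{(\underline{k})}$ (respectively $\mathcal{D}^{(\underline{k})}$) in degree $q=w$ and kills the other degrees. One then has
\[
e^{(\underline{k})}\cdot H^{i+w}_{c,\et}\bigl(\AAA^{\vee,w}\times_{\overline{\PPP}_K}Y_{\overline{\FF}_p},E_{\mathfrak{l}}\bigr)=H^i_{c,\et}(Y_{\overline{\FF}_p},\mathcal{L}^{(\underline{k})}),
\]
and similarly on the rigid side. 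The Hecke operators and the partial Frobenii are likewise realized as algebraic correspondences on $\AAA^{\vee,w}$. The comparison therefore reduces to the statement that for any algebraic correspondence $\Gamma$ on $\AAA^{\vee,w}\times_{\overline{\PPP}_K}Y$ the alternating $\ell$-adic and rigid traces on \emph{constant} coefficients agree, and this is supplied directly by Mieda's cycle-class comparison \cite[Cor.~3.3]{mieda2009}, with no contracting or transversality hypothesis.

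Your approach, by contrast, needs Fujiwara/Varshavsky (and a rigid analogue) for the correspondence $[K^pgK^p]\circ\phi_v^a\circ\phi_{\bar v}^b$ on $Y$ with nontrivial coefficients. The obstacle you flag at the end is real, but there is a more basic one: when $a=b=0$ the operator is a bare prime-to-$p$ Hecke correspondence, which is \'etale and not contracting, so Deligne's conjecture does not apply and the naive local terms are not justified. One can repair this by proving the trace identity only for $\min(a,b)\gg 0$ and then invoking a linear-recurrence/specialization argument in $(a,b)$ to propagate it, but you have not done so, and the transversality of the pure partial-Frobenius twist $\phi_v^a$ on the Newton stratification would still need to be checked. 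The paper's Kuga--Sato reduction sidesteps all of this by trading coefficients for algebraic cycles and invoking a result that compares traces of correspondences directly. Your weight argument for the second identity and the Poincar\'e-duality deduction of the third are essentially the same as in the paper.
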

	
	\begin{proof}
		Note that $\mathcal{D}^{(\underline{k})}$ and $\mathcal{L}^{(\underline{k})}$ are pure of weight $w=(k_2-k_1)+2(k_3-k_2-3)+3(-k_3+2)=-(k_1+k_2+k_3)$. Then by Deligne's Weil \uppercase\expandafter{\romannumeral2} and its rigid analog (we refer to \cite[Thm. 6.6.2]{kiran2002}), $H^i_{c,rig}(Y/E_v,\mathcal{D}^{(\underline{k})})$ and $H^i_{c,\et}(Y_{\overline{\FF}_p},\mathcal{L}^{(\underline{k})})$ are both mixed of weight $\leq w+i$. We also have $H^2_{c,rig}(Y/E_v,\mathcal{D}^{(\underline{k})})$ and $H^2_{c,\et}(Y_{\overline{\FF}_p},\mathcal{L}^{(\underline{k})})$ both pure of weight $w+2$, so the second part of the proposition follows immediately from the first part.
		
		To prove the first part, we follow the idea in \cite[Sect. 6]{saito_2009}. Consider the decomposition $E\otimes_{\QQ} E\cong E_{\tau}\oplus E_{\bar{\tau}}$, where $E_{\tau}$ is the copy of $E$ with embedding $\tau$ and similarly $E_{\bar{\tau}}$. As we have $\delta\in \OO_{E,(p)}^{\times}$, the projection $e_{\bar{\tau}}=1\otimes 1- \delta \otimes \delta^{-1}$ induces a linear combination of prime-to-$p$ endomorphisms $e_{\bar{\tau}}=i(1)\otimes 1- i(\delta) \otimes \delta^{-1}$ on the dual of the universal abelian scheme $\AAA^{\vee}/\PPP_{K,\OO_{E,(p)}}$, where $i:\OO_E \to {\rm End}(\AAA^{\vee})$ is induced by the $\OO_E$-action of $\AAA$. Then $e_{\bar{\tau}}$ acts on $\mathcal{L}=R^1a_* E_{\mathfrak{l}}$ as an idempotent that $e_{\bar{\tau}}\cdot \mathcal{L}=\mathcal{L}_{\bar{\tau}}$.
		
		Using the Vandermonde determinant, one can easily find a $\QQ$-linear combination $e^1$ of multiplications by prime-to-$p$ integers acting on $\AAA$, such that the induced action of $e^1$ on $R^1a_*\QQ_l$ is the identity and is equal to $0$ on $R^qa_*\QQ_l$ for $q\neq 1$. Consider the fiber product $a^{w}: \AAA^{\vee,w} \to \PPP$, then $e_{\bar{\tau}}^{\otimes w}\cdot (e^1)^{\otimes w}$ acts as an idempotent on $R^q a^{w}_* E_{\mathfrak{l}}$, and we get $\mathcal{L}_{\bar{\tau}}^{\otimes w}$ if $q=w$ and $0$ if $q\neq w$. Therefore, there exists an element $e_w\in \QQ[S_w]$ in the group algebra of the symmetric group $S_w$, such that $e_w\cdot \mathcal{L}_{\bar{\tau}}^{\otimes w}=\mathcal{L}^{(\underline{k})}$.
		
		Taking a product, we finally get a $L$-linear combination $e^{(\underline{k})}$ of algebraic correspondences on $\AAA^{\vee,w}$ such that:
		\begin{itemize}
			\item It is an $E$-linear combination of permutations in $S_w$ and prime-to-$p$ endomorphisms of $\AAA^{\vee,w}$
			\item The action of $e^{(\underline{k})}$ on $R^qa_*^w E_{\mathfrak{l}}$ is the projection onto the direct summand $\mathcal{L}^{(\underline{k})}$ if $q=w$ and is equal to 0 if $q\neq w$.
		\end{itemize}
		Then $e^{(\underline{k})}$ also acts on $H^q_{c,\et}(\AAA^{\vee}, E_{\mathfrak{l}})$. Considering the Leray spectral sequence for $a^w$,  we have
		$$
		e^{(\underline{k})}\cdot H^{i+w}_{c,\et}(\AAA^{\vee,w} \times_{\overline{\PPP}_K} Z_{\overline{\FF}_p}, E_{\mathfrak{l}})=H^{i}_{c,\et}(Z_{\overline{\FF}_p}, \mathcal{L}^{(\underline{k})})
		$$
		for any locally closed subscheme $Z\subset \overline{\PPP}_K$.
		
		Similarly, let $(\AAA^{\vee,w}/\OO_{E_v})_{cris}$ over $(\overline{\PPP}_K/\OO_{E_v})_{cris}$ denote the small crystalline site and $H^q_{cris}(\AAA^{\vee,w}/\overline{\PPP}_K)$ denote the relative crystalline cohomology. This is an $F$-crystal on $(\overline{\PPP}_K/\OO_{E_v})_{cris}$, and we denote by $H^q_{rig}(\AAA^{\vee,w}/\overline{\PPP}_K)$ the associated overconvergent $F$-isocrystal on $\overline{\PPP}_K$. Again we have $e^{(\underline{k})}$ acting on $H^q_{rig}(\AAA^{\vee,w}/\overline{\PPP}_K)$ as an idempotent, and
		\begin{align*}
		e^{(\underline{k})}\cdot H^q_{rig}(\AAA^{\vee,w}/\overline{\PPP}_K)=
		\left\lbrace \begin{array}{ll}
		\mathcal{D}^{(\underline{k})} & q=w\\
		0 & q\neq w
		\end{array}	 		
		\right..
		\end{align*}
		Then we have
		\begin{align*}
		e^{(\underline{k})}\cdot H^{i+w}_{c,rig}(\AAA^{\vee,w} \times_{\overline{\PPP}_K} Z / E_v)=H^{i}_{c,rig}(Z/ E_v, \mathcal{L}^{(\underline{k})}).
		\end{align*}
		
		By the same argument in the proof of \cite[Prop. 4.13]{tx13}, the actions of $Fr_p$, $S_p$ and the prime-to-$p$ Hecke operators $[K^pgK^p]$ acting on $H^{i}_{c,\et}(Z_{\overline{\FF}_p}, \mathcal{L}^{(\underline{k})})$ (resp. $H^{i}_{c,rig}(Z/ E_v, \mathcal{L}^{(\underline{k})})$) can be realized as compositions of $e^{(\underline{k})}$ and the actions on $H^{i+w}_{c,\et}(\AAA^{\vee,w} \times_{\overline{\PPP}_K} Z_{\overline{\FF}_p}, E_{\mathfrak{l}})$ (resp. $H^{i+w}_{c,rig}(\AAA^{\vee,w} \times_{\overline{\PPP}_K} Z / E_v)$) induced by some algebraic correspondences on $\AAA^{\vee,w}$. 
		
		Therefore, to prove the first part of the proposition, it suffices to show that, for any algebraic correspondence $\Gamma$ of $\AAA^{\vee,w}$, we have the equality:
		\begin{align*}
		&\sum_{i}(-1)^{i}{\rm Tr}(\Gamma^*, H^{i}_{c,\et}(\AAA^{\vee,w} \times_{\overline{\PPP}_K} Y_{\overline{\FF}_p}, E_{\mathfrak{l}}))\\
		=&\sum_{i}(-1)^{i}{\rm Tr}(\Gamma^*, H^{i}_{c,rig}(\AAA^{\vee,w} \times_{\overline{\PPP}_K} Y/E_v)).
		\end{align*}
		And this follows from \cite[Cor. 3.3]{mieda2009} 
	\end{proof}
	
	We consider the $p$-adic local system $\mathscr{L}=R^1 a_* \QQ_p$ on $P_{K,E}$, where $a:\AAA^{\vee}\to P_{K,E}$ is the structure map. Similarly, for a good weight $(\underline{k})=(k_1,k_2,k_3,0)$, $2\geq k_3 \geq k_2+3$, we define
	\begin{align*}
	\mathscr{L}^{(\underline{k})}:= {\rm Sym}^{k_2-k_1} (\mathscr{L}_{\bar{\tau}}) \otimes {\rm Sym}^{k_3-k_2-3} (\wedge^2(\mathscr{L}_{\bar{\tau}})) \otimes ({\rm det} \mathscr{L}_{\bar{\tau}})^{\otimes -k_3+2}.
	\end{align*}

	Assume that $L/\QQ_p$ is a finite extension and $f\in S_{(\underline{k})}(K, L)$ is a cuspidal eigen-newform (i.e. $f$ is an eigenform and $\dim (\pi^{\infty})^{K}=1$, where $\pi(f)=\pi^{\infty}\otimes\pi_{\infty}$ is the associated cuspidal automorphic representation). Let $\mathfrak{m}$ denote the maximal ideal $\mathscr{H}(K^p,L)$ associated with the Hecke eigensystem of $f$. Assume that the associated Galois representation $\sigma(f)=(H^2_{c,\et}(P_{K,\overline{E}}, \mathscr{L}^{(\underline{k})})\otimes_{\QQ_p}L)[\mathfrak{m}]$ is irreducible of dimension 3. We write $\sigma_p=\sigma(f)|_{{\rm Gal}(\overline{\QQ}_p/\QQ_p)}$ and $D_p=D_{cris}(\sigma_p)$.
	
	\begin{prop}
		We have the exact sequence 
		$$
		0=H^2_{Y, rig}(\overline{\PPP}_K/L,\mathcal{D}^{(\underline{k})}))[\mathfrak{m}]\to H^2_{rig}(\overline{\PPP}_K/L,\mathcal{D}^{(\underline{k})})[\mathfrak{m}]\to 
		H^2_{rig}(\overline{\PPP}^{ord}_K/L,\mathcal{D}^{(\underline{k})})[\mathfrak{m}].
		$$
	\end{prop}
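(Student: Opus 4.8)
The plan is to reduce the whole assertion to the single vanishing statement
$H^2_{Y,rig}(\overline{\PPP}_K/L,\mathcal{D}^{(\underline{k})})[\mathfrak{m}]=0$. Granting this, observe that for finite-dimensional $\mathscr{H}(K^p,L)$-modules $M$ one has $M[\mathfrak{m}]=0$ if and only if $M_{\mathfrak{m}}=0$; localizing the long exact sequence of the localization triangle of Section~5 at $\mathfrak{m}$ (localization being exact) then gives $0\to H^2_{rig}(\overline{\PPP}_K/L,\mathcal{D}^{(\underline{k})})_{\mathfrak{m}}\to H^2_{rig}(\overline{\PPP}_K^{ord}/L,\mathcal{D}^{(\underline{k})})_{\mathfrak{m}}$, and passing to $\mathfrak{m}$-torsion (which commutes with $\mathfrak{m}$-localization) yields exactly the stated sequence. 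To control $H^2_{Y,rig}$, note that $Y=\overline{\PPP}_K-\overline{\PPP}_K^{ord}$ is the closure of the middle Newton stratum: by Proposition~\ref{comparision}(3) it is smooth, and by Theorem~\ref{connected} it is equidimensional of dimension $1$, with a single irreducible component $Y_j$ lying over each connected component of $\overline{\PPP}_{K,\overline{\FF}_p}$. Hence $Y$ has pure codimension $1$ in $\overline{\PPP}_K$, and the Gysin isomorphism of Section~5 gives $H^2_{Y,rig}(\overline{\PPP}_K/L,\mathcal{D}^{(\underline{k})})\cong H^0_{rig}(Y/L,\mathcal{D}^{(\underline{k})}|_Y)(-1)$, compatibly with the prime-to-$p$ Hecke action (the Gysin map being functorial for the \'etale Hecke correspondences of Section~6).

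Next I would transfer this to the $l$-adic side. Since $(\underline{k})$ is good, the comparison proposition applies, and its $H^0$-statement gives $[H^0_{rig}(Y/E_v,\mathcal{D}^{(\underline{k})})\otimes_{E_v}\overline{\QQ}_p]=[H^0_{\et}(Y_{\overline{\FF}_p},\mathcal{L}^{(\underline{k})})\otimes_{E_{\mathfrak{l}}}\overline{\QQ}_l]$ in the Grothendieck group of $\mathscr{H}(K^p,\CC)$-modules; transporting $\mathfrak{m}$ through $\iota_p$ and $\iota_l$, the two modules then have the same Jordan--H\"older constituents, so it suffices to prove $H^0_{\et}(Y_{\overline{\FF}_p},\mathcal{L}^{(\underline{k})})[\mathfrak{m}]=0$. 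Writing $Y_{\overline{\FF}_p}=\bigsqcup_j Y_j$ with each $Y_j$ smooth and irreducible, we get $H^0_{\et}(Y_{\overline{\FF}_p},\mathcal{L}^{(\underline{k})})=\bigoplus_j(\mathcal{L}^{(\underline{k})})^{\pi_1^{\et}(Y_j,\bar x_j)}$. By Theorem~\ref{connected} the monodromy image is $K'_{0,l}=K_{0,l}\cap\GG'(\QQ_l)$, which is open in $\GG'(\QQ_l)\cong SL_{3,\QQ_l}$ (as $l$ splits) and hence Zariski dense in $\GG'_{\QQ_l}$; so each summand equals the space of $\GG'_{\QQ_l}$-invariants in the fibre $V^{(\underline{k})}={\rm Sym}^{k_2-k_1}({\rm std})\otimes{\rm Sym}^{k_3-k_2-3}(\wedge^2{\rm std})\otimes({\rm det})^{\otimes(2-k_3)}$. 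Since $({\rm det})$ is trivial on $\GG'$ and $\wedge^2{\rm std}\cong{\rm std}^{\vee}$, this invariant space vanishes unless $k_2-k_1=k_3-k_2-3$, and is one-dimensional otherwise.

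If $k_2-k_1\ne k_3-k_2-3$ we are done. In the remaining case, each $(\mathcal{L}^{(\underline{k})})^{\pi_1^{\et}(Y_j)}$ is the fibre of the maximal geometrically constant subsheaf of $\mathcal{L}^{(\underline{k})}|_{Y_{\overline{\FF}_p}}$, so $H^0_{\et}(Y_{\overline{\FF}_p},\mathcal{L}^{(\underline{k})})$ is, as an $\mathscr{H}(K^p,\overline{\QQ}_l)$-module, a character twist of $\overline{\QQ}_l[\pi_0(\overline{\PPP}_{K,\overline{\FF}_p})]$, the character recording the residual $\GG/\GG'$-action on the invariant line. As recalled in Section~4, the prime-to-$p$ Hecke action on $\pi_0(\overline{\PPP}_{K,\overline{\FF}_p})=\pi_0(Sh_K(\CC))$ factors through the abelian quotient $\nu\colon\GG\to\T$; hence every Hecke eigensystem occurring in $H^0_{\et}(Y_{\overline{\FF}_p},\mathcal{L}^{(\underline{k})})$ factors through $\nu$ as well. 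But $\mathfrak{m}$ is the eigensystem of a \emph{cuspidal} eigen-newform $f$ with $\sigma(f)$ irreducible of dimension $3$, so it cannot factor through $\nu$: otherwise $\pi(f)$ would be one-dimensional, contradicting cuspidality (equivalently, $\sigma(f)$ would be a sum of characters, contradicting irreducibility). Therefore $H^0_{\et}(Y_{\overline{\FF}_p},\mathcal{L}^{(\underline{k})})[\mathfrak{m}]=0$, and tracing the reductions back proves the proposition. The main obstacle is precisely this last step --- recognizing that the geometrically trivial part of $H^0(Y)$ carries only Eisenstein eigensystems and using the cuspidality and irreducibility hypotheses to exclude $\mathfrak{m}$; the Gysin and comparison identifications in the first two paragraphs are formal consequences of results already established.
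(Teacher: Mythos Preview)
Your proof is correct and follows essentially the same route as the paper: reduce via the Gysin isomorphism to $H^0_{rig}(Y,\mathcal{D}^{(\underline{k})})[\mathfrak{m}]$, use the $\ell$-adic comparison to pass to $H^0_{\et}(Y_{\overline{\FF}_p},\mathcal{L}^{(\underline{k})})[\mathfrak{m}]$, compute this as $K'_{0,l}$-invariants via Theorem~\ref{connected}, and then exclude $\mathfrak{m}$ using the irreducibility of $\sigma(f)$. The only difference is bookkeeping in the invariant calculation: the paper treats the fibre of $\mathcal{L}^{(\underline{k})}$ as the irreducible $\GG$-module $V_{(k_1+1,k_2+1,k_3-2,0)}$ and concludes that the $\GG'$-invariants vanish except in the trivial-weight case $(k-1,k-1,k+2,0)$, whereas you work directly with the tensor product ${\rm Sym}^{k_2-k_1}({\rm std})\otimes{\rm Sym}^{k_3-k_2-3}(\wedge^2{\rm std})$ and find a one-dimensional invariant whenever $k_2-k_1=k_3-k_2-3$; since in either case the surviving piece carries only abelian Hecke eigensystems (a power of $\det$ on $\pi_0$), the contradiction with the $3$-dimensional irreducibility of $\sigma(f)$ goes through identically.
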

	
	\begin{proof}
		By Thm. \ref{connected}, we have 
		$$
		H^0_{\et}(Y_{\overline{\FF}_p}, \mathcal{L})=(T^{K'_l})_{E_{\mathfrak{l}}}^{\oplus\#\pi_0(Y)}
		$$ 
		where $T\cong T_l(\AAA_x)$ is an algebraic $\GG_{\ZZ_l}$-representation and $x$ is a geometric point.
		Then we have
		\begin{align*}
		H^0_{\et}(Y_{\overline{\FF}_p},\mathcal{L}^{(\underline{k})})&= (T_{(k_1+1,k_2+1,k_3-2,0),\OO_{E_{\mathfrak{l}}}}^{K'_l})_{E_{\mathfrak{l}}}^{\oplus\#\pi_0(Y)}\\
		&
		=\left\lbrace \begin{array}{ll}
		0 & (\underline{k})\neq (k-1,k-1,k+2,0)\\
		(\det^{\otimes k})^{\oplus\#\pi_0(Y)} & (\underline{k})= (k-1,k-1,k+2,0)
		\end{array}
		\right.
		\end{align*}
		since $T_{(k_1+1,k_2+1,k_3-2,0),\OO_{E_{\mathfrak{l}}}}$ is a $\GG(\OO_{E_{\mathfrak{l}}})$-irreducible module and $K'_l$ is an open compact subgroup of $\GG'(\OO_{E_{\mathfrak{l}}})$. Here $\det$ comes from the character
		\begin{align*}
		\GG(E)\cong (GL_3\times GL_1)(E) \to& GL_1(E) \\
		(g,r)\longmapsto& \det g.
		\end{align*}
		
		Considering the Hecke eigenvectors associated with $\mathfrak{m}$, we have $H^0_{\et}(Y_{\overline{\FF}_p}, \mathcal{L})[\mathfrak{m}]=0$ (otherwise $\sigma(f)$ factor through a character). By the previous proposition and the Gysin isomorphism, we have
		\begin{align*}
		0=H^0_{rig}(Y/E_v, \mathcal{D}^{(\underline{k})})[\mathfrak{m}]\cong H^2_{Y,rig}(\overline{\PPP}_K/E_v, \mathcal{D}^{(\underline{k})})[\mathfrak{m}]
		\end{align*} 
	\end{proof}
	
	By \cite[Thm 1.1]{llz2019}, we have the commutative diagram
	$$
	\begin{tikzcd}
	H^2_{c,dR}(P_{K,L},(\mathcal{F}_L^{(\underline{k})},\nabla))\otimes_{L}B_{dR} \arrow[d] \arrow[r,"\sim"] &H^2_{c,\et}(P_{K,\overline{\QQ}_p},\mathscr{L}^{(\underline{k})})\otimes_{\QQ_p}B_{dR} \arrow[d] \\
	H^2_{dR}(P_{K,L},(\mathcal{F}_L^{(\underline{k})},\nabla))\otimes_{L}B_{dR} \arrow[r,"\sim"] &H^2_{\et}(P_{K,\overline{\QQ}_p},\mathscr{L}^{(\underline{k})})\otimes_{\QQ_p}B_{dR} \\
	\end{tikzcd}.
	$$
	Then by the irreducibility of $\sigma(f)$ we have
	$$
	\begin{tikzcd}
	H^2_{c,dR}(P_{K,L},(\mathcal{F}_L^{(\underline{k})},\nabla))[\m] \arrow[d,hook] \arrow[r,"\sim"] &\mathbb{H}^2(P^{tor}_{K,L}, DR^{\bullet}(\mathcal{F}_L^{(\underline{k})})^{sub})[\m] \arrow[d,hook] \\
	H^2_{dR}(P_{K,L},(\mathcal{F}_L^{(\underline{k})},\nabla))[\m] \arrow[r,"\sim"]  &\mathbb{H}^2(P^{tor}_{K,L}, DR^{\bullet}(\mathcal{F}_L^{(\underline{k})})^{can})[\m]
	\end{tikzcd}.
	$$

	In summary, for any good weight $(\underline{k})$ and finite extension $L$ over $E_v=\QQ_p$, we have the following commutative diagram. 
	$$
	\begin{tikzcd}	 	
	S_{(\underline{k})}(K,L)[\m]  \arrow[d,hook] \arrow[r]
	&  \dfrac{S^{\dagger}_{(\underline{k})}(K,L)[\m]}{\Theta(S^{\dagger}_{(k_1,k_3-2,k_2+2,0)}(K,L)[\m])} \arrow[d]
	\\
	\mathbb{H}^2(P^{tor}_{K,L}, DR^{\bullet}(\mathcal{F}_L^{(\underline{k})})^{sub})[\m] \arrow[d,hook] \arrow{r}
	& \mathbb{H}^2(P^{tor}_{K,L}, j^{\dagger}_{\overline{\PPP}_K^{tor,ord}}DR^{\bullet}(\mathcal{F}_L^{(\underline{k})})^{sub})[\m] \arrow{d}
	\\
	\mathbb{H}^2(P^{tor}_{K,L}, DR^{\bullet}(\mathcal{F}_L^{(\underline{k})})^{can})[\m] \arrow{r} \arrow{d}{\cong}
	& \mathbb{H}^2(P^{tor}_{K,L}, j^{\dagger}_{\overline{\PPP}_K^{tor,ord}}DR^{\bullet}(\mathcal{F}_L^{(\underline{k})})^{can})[\m] \arrow[d]
	\\
	H^2_{rig}(\overline{\PPP}_K/\QQ_p, \mathcal{D}^{(\underline{k})})[\m] \otimes_{\QQ_p} L \arrow[r, hook]
	&
	H^2_{rig}(\overline{\PPP}_K^{ord}/\QQ_p, \mathcal{D}^{(\underline{k})})[\m]\otimes_{\QQ_p} L
	\end{tikzcd}.
	$$
	We can conclude as follows.
	\begin{prop}\label{injectivity}
		For any good weight $(\underline{k})$ and finite extension $L$ over $\QQ_p$, we have the commutative diagram
		$$
		\begin{tikzcd}	 	
		S_{(\underline{k})}(K,L)[\m]  \arrow[d,hook] \arrow[r,hook]
		&  \dfrac{S^{\dagger}_{(\underline{k})}(K,L)[\m]}{\Theta(S^{\dagger}_{(k_1,k_3-2,k_2+2,0)}(K,L)[\m])} \arrow[d,hook]
		\\
		\mathbb{H}^2(P^{tor}_{K,L}, DR^{\bullet}(\mathcal{F}_L^{(\underline{k})})^{sub})[\m] \arrow[r,hook,"i"]
		& \mathbb{H}^2(P^{tor}_{K,L}, j^{\dagger}_{\overline{\PPP}_K^{tor,ord}}DR^{\bullet}(\mathcal{F}_L^{(\underline{k})})^{sub})[\m] 
		\end{tikzcd}
		$$
		and $i$ is $\varphi$-equivalent, where the $\varphi$-action on $\mathbb{H}^2(P^{tor}_{K,L}, DR^{\bullet}(\mathcal{F}_L^{(\underline{k})})^{sub})$ comes from the operator $Fr\otimes 1$ on $H^2_{c,rig}(\overline{\PPP}_K/\QQ_p,\mathcal{D}^{(\underline{k})})\otimes_{\QQ_p}L$, and is equal to $Fr\otimes 1$ on the right hand side.
	\end{prop}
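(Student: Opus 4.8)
\emph{Strategy.} The plan is to read off the whole statement from the large commutative diagram assembled just before it, so that the only genuinely new inputs are (i) the injectivity of its bottom horizontal arrow, which ultimately rests on the monodromy computation of Theorem~\ref{connected}, and (ii) a compatibility of the partial Frobenius operators with restriction to the ordinary tube. For (i) I would first note that
$$H^2_{rig}(\overline{\PPP}_K/\QQ_p, \mathcal{D}^{(\underline{k})})[\m]\otimes_{\QQ_p}L \longrightarrow H^2_{rig}(\overline{\PPP}^{ord}_K/\QQ_p, \mathcal{D}^{(\underline{k})})[\m]\otimes_{\QQ_p}L$$
is injective: it is an arrow in the long exact sequence of the distinguished triangle $R\Gamma_{Y,rig}(\overline{\PPP}_K)\to R\Gamma_{rig}(\overline{\PPP}_K)\to R\Gamma_{rig}(\overline{\PPP}^{ord}_K)\xrightarrow{+1}$, and the preceding proposition supplies $H^2_{Y,rig}(\overline{\PPP}_K/L, \mathcal{D}^{(\underline{k})})[\m]=0$ — this is where Theorem~\ref{connected} enters, through $H^0_{rig}(Y/E_v,\mathcal{D}^{(\underline{k})})[\m]=0$ and the Gysin isomorphism.

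\emph{The diagram chase.} Next I would argue that the left column of the big diagram, the composite
$$\mathbb{H}^2(P^{tor}_{K,L}, DR^{\bullet}(\mathcal{F}_L^{(\underline{k})})^{sub})[\m]\hookrightarrow \mathbb{H}^2(P^{tor}_{K,L}, DR^{\bullet}(\mathcal{F}_L^{(\underline{k})})^{can})[\m]\xrightarrow{\ \sim\ } H^2_{rig}(\overline{\PPP}_K/\QQ_p,\mathcal{D}^{(\underline{k})})[\m]\otimes L,$$
is injective: the first arrow is injective by the irreducibility of $\sigma(f)$ together with the $p$-adic de Rham comparison of \cite{llz2019} (it is identified with the map from compactly-supported to ordinary étale cohomology on $\m$-eigenspaces, which is injective precisely because the $\m$-part of $H^2$ already lies in the image of $H^2_c$ when $\sigma(f)$ is irreducible), and the second is the comparison isomorphism of Section~5. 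Composing with the injective bottom row and using commutativity, the composite $\mathbb{H}^2(\cdots)^{sub}[\m]\xrightarrow{i}\mathbb{H}^2(j^{\dagger}\cdots)^{sub}[\m]\to\cdots\to H^2_{rig}(\overline{\PPP}^{ord}_K)[\m]\otimes L$ is injective, hence $i$ is injective. The inclusion $S_{(\underline{k})}(K,L)[\m]\hookrightarrow\mathbb{H}^2(\cdots)^{sub}[\m]$ — which is the identification of $S_{(\underline{k})}(K,L)[\m]$ with $Fil^2$ of this $\mathbb{H}^2$ coming from the first proposition of this section — composed with $i$ is then injective, so the top horizontal arrow is injective by commutativity of the top square; and the right vertical arrow is injective because, through the BGG quasi-isomorphism, $S^{\dagger}_{(\underline{k})}(K,L)[\m]/\Theta(S^{\dagger}_{(k_1,k_3-2,k_2+2,0)}(K,L)[\m])$ is the $E_2^{2,0}=E_\infty^{2,0}=Fil^2$ term of the hypercohomology spectral sequence of the $j^{\dagger}$-ed complex (the higher coherent cohomology of the automorphic bundles on the strict neighbourhoods of the ordinary tube vanishing, $\overline{\PPP}^{ord}_K$ being affine, so that no $d_2$-differential hits it). Identifying $\mathbb{H}^2(j^{\dagger}\cdots)^{sub}[\m]$ with that quotient then recovers the triangular diagram stated in the introduction.

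\emph{$\varphi$-equivariance.} The map $i$ is induced by the morphism of complexes $DR^{\bullet}(\mathcal{F}_L^{(\underline{k})})^{sub}\to j^{\dagger}_{\overline{\PPP}^{tor,ord}_K}DR^{\bullet}(\mathcal{F}_L^{(\underline{k})})^{sub}$, i.e.\ restriction to the ordinary tube. The partial Frobenius operators $Fr_v$, $Fr_{\bar{v}}$ of Section~6 are defined on source and target by the same recipe — on $\overline{\PPP}_K$ by quotienting by the $v$-part of the relative Frobenius kernel, and on a strict neighbourhood of $]\overline{\PPP}^{tor,ord}_K[$ by quotienting by the canonical subgroup $H_v$, which reduces modulo $p$ to that kernel — and the isomorphisms $\pi_v^*\colon\phi_v^*DR^{\bullet}(\mathcal{F})\cong DR^{\bullet}(\mathcal{F})$ match up under restriction. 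Hence $i$ commutes with $Fr_v$, likewise with $Fr_{\bar{v}}$, and therefore with $Fr=Fr_vFr_{\bar{v}}$; under the Section~5 comparison this $Fr$ is exactly $Fr\otimes 1$ on $H^2_{c,rig}(\overline{\PPP}_K/\QQ_p,\mathcal{D}^{(\underline{k})})[\m]\otimes L$, and it is $Fr\otimes 1$ on the right-hand side by construction, so $i$ is $\varphi$-equivariant.

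\emph{Main obstacle.} The delicate point is this last compatibility: one must check that the Section~5 comparison isomorphism, the BGG quasi-isomorphism, and the restriction map to the ordinary tube respect not merely the total crystalline Frobenius but each partial Frobenius $Fr_v$, $Fr_{\bar{v}}$ individually, and that the canonical subgroup $H_v$ used to build $\phi_v$ over the strict neighbourhoods genuinely specializes to the $v$-part of the Frobenius kernel compatibly with the $\OO_E$-action, the polarization and the Hodge filtration — i.e.\ that the two constructions of $\phi_v$ glue. This gluing is precisely what the formalism of Section~6 is set up to deliver, but it has to be invoked with care; everything else in the proof is a formal diagram chase built on the preceding proposition and the comparison isomorphisms.
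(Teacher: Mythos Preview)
Your proposal is essentially the paper's own argument: the proposition is stated immediately after the big four-row diagram with the words ``We can conclude as follows,'' and the proof is precisely the diagram chase you describe, with the injectivity of the bottom row supplied by the preceding proposition (vanishing of $H^2_{Y,rig}[\m]$ via the Gysin isomorphism and Theorem~\ref{connected}) and the injectivity of the left column by the irreducibility of $\sigma(f)$ together with the comparison of \cite{llz2019}.

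One small caution on your justification of the right vertical injection: you invoke affineness of $\overline{\PPP}^{ord}_K$ to kill the higher $E_1^{p,q}$ terms, but it is the ordinary stratum in the \emph{minimal} compactification that is affine (this is \cite[Prop.~6.3.1(c)]{Hasse}), not a priori the open or toroidal one you are computing on; the vanishing you want does hold, but it goes through the pushforward along $\pi:\PPP^{tor}\to\PPP^{min}$ and the acyclicity of $R\pi_*$ on the relevant bundles rather than through affineness of the toroidal model directly. The paper does not spell this out either, so you are not missing anything relative to the text, but if you want the argument to be self-contained this is the step to tighten.
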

	
	\begin{rmk}
		Twisting central characters, it is obvious that the previous proposition holds for arbitrary cohomological weight $(\underline{k})$.
	\end{rmk}

	We further assume that $\sigma_p$ is crystalline and that the characteristic polynomial of $\varphi$ on $D_p$ splits in $L$ with three different roots. Now we describe the filtered $\varphi$-module $D_p$. 
	\begin{align*}
	D_p&=(H^2_{c,\et}(P_{K,\overline{\QQ}_p}, \mathscr{L}^{(\underline{k})})[\m]\otimes_{\QQ_p}B_{dR})^{{\rm Gal}(\overline{\QQ}_p/\QQ_p)}\otimes_{\QQ_p}L\\ 
	&\cong H^2_{c,dR}(P_{K,\QQ_p},(\mathcal{F}_{\QQ_p}^{(\underline{k})}, \nabla))[\m]\otimes_{\QQ_p} L\\ 
	&\cong H^2_{c,rig}(\overline{\PPP}_K/\QQ_p,\mathcal{D}^{(\underline{k})})[\m]\otimes_{\QQ_p} L.\\
	\end{align*}
	The operator $\varphi$ comes from the operator $\varphi\otimes 1$ on $H^2_{c,rig}(\overline{\PPP}_K/\QQ_p,\mathcal{D}^{(\underline{k})})[\m]\otimes_{\QQ_p} L$. The Frobenius semilinear operator $\varphi$ is linear because the rigid cohomology is over $\QQ_p$. And the filtration of $D_p$ comes from the Hodge filtration on the de Rham cohomology (we refer to \cite[Sect. 3.3]{BGG} for the Hodge filtration of the de Rham complexes). For convenience, we write the BGG complex as 
	$$
	BGG^{\bullet}(\mathcal{F}_L^{(\underline{k})})^{sub}=[\omega_1^{sub}\to\omega_2^{sub}\to\omega_3^{sub}]
	$$
	then it is not hard to compute the Hodge filtration as:
	\begin{align*}
	Fil^a D_p
	&=\mathbb{H}^2(P^{tor}_{K,L}, Fil^a DR^{\bullet}(\mathcal{F}_L^{(\underline{k})})^{sub})[\m] \\
	&=\left\lbrace 
	\begin{array}{ll}
	\mathbb{H}^2(P^{tor}_{K,L}, BGG^{\bullet}(\mathcal{F}_L^{(\underline{k})})^{sub})[\m] 
	& a\leq k_1+1 \\
	\mathbb{H}^1(P^{tor}_{K,L}, [\omega_2^{sub}\to \omega_3^{sub}])[\m]
	& k_1+1 < a \leq k_2+2 \\
	H^0(P^{tor}_{K,L},\omega_3^{sub})[\m]
	& k_2+2 < a \leq k_3 \\
	0 & k_3<a
	\end{array}
	\right. ,\\
	Gr^aD_p
	&=\left\lbrace 
	\begin{array}{ll}
	H^2(P^{tor}_{K,L},\omega_1^{sub})[\m] & a=k_1+1 \\
	H^1(P^{tor}_{K,L},\omega_2^{sub})[\m] & a=k_2+2 \\
	H^0(P^{tor}_{K,L},\omega_3^{sub})[\m] & a=k_3 \\
	0 & otherwise
	\end{array}
	\right..
	\end{align*} 
	
	As $\varphi$ is $L$-linear on $D_p$, the characteristic polynomial $char(\varphi)$ makes sense. By assumption $char(\varphi)=(x-\alpha_1)(x-\alpha_2)(x-\alpha_3)$ for some $\alpha_i\in L$ and $v_p(\alpha_1) \leq v_p(\alpha_2) \leq  v_p(\alpha_3)$. Then there exists a basis $e_1,e_2,e_3$ that satisfies
	$$
	D_p=Le_1 \oplus Le_2 \oplus Le_3 ,\quad \varphi(e_i)=\alpha_i e_i.
	$$
	Let $D_i=Le_i$ and $D_{ij}=Le_i\oplus Le_j$ denote the $\varphi$-stable subspaces, equipped with induced filtrations.
	
	\begin{thm}\label{phi-stable}
		$Fil^{k_3}D_p$ is $\varphi$-stable if and only if $\varphi f-\alpha_3 f= \Theta(g)$ for some cuspidal overconvergent form $g\in S^{\dagger}_{(k_1,k_3-2,k_2+2,0)}(K,L)[\m]$, moreover,
		$$
		k_1+1 \leq v_p(\alpha_1) \leq v_p(\alpha_2)\leq k_2+2<v_p(\alpha_3)=k_3
		$$
		hence $D_3$ and $D_{12}$ are admissible. Here we view $f$ as an overconvergent form, and thus $\varphi f$ makes sense.
	\end{thm}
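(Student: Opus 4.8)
The plan is to read the statement off from Proposition~\ref{injectivity} by keeping careful track of the Frobenius. Recall first that $Fil^{k_3}D_p=H^0(P^{tor}_{K,L},\omega_3^{sub})[\m]=S_{(\underline{k})}(K,L)[\m]$, that this space is one-dimensional — since $f$ is an eigen-newform and each of the three Hodge--Tate weights $k_1+1<k_2+2<k_3$ of the weakly admissible module $D_p$ occurs with multiplicity one — and that it is spanned by the image of $f$, which I again write $f$. In the diagram of Proposition~\ref{injectivity}, the left vertical map is the inclusion $Fil^{k_3}D_p\subset D_p$; the bottom map $i$ is injective and intertwines the crystalline Frobenius $\varphi$ on $D_p\cong\mathbb{H}^2(P^{tor}_{K,L},DR^{\bullet}(\mathcal{F}_L^{(\underline{k})})^{sub})[\m]$ with $Fr=Fr_vFr_{\bar{v}}$; and the right vertical map $\jmath$ is, by construction, the composite of the edge map $S^{\dagger}_{(\underline{k})}(K,L)[\m]=H^0(j^{\dagger}_{\overline{\PPP}_K^{tor,ord}}\omega_3^{sub})[\m]\to\mathbb{H}^2(P^{tor}_{K,L},j^{\dagger}_{\overline{\PPP}_K^{tor,ord}}BGG^{\bullet})^{sub}[\m]$ — whose coboundary image is exactly $\Theta\big(H^0(j^{\dagger}_{\overline{\PPP}_K^{tor,ord}}\omega_2^{sub})\big)=\Theta\big(S^{\dagger}_{(k_1,k_3-2,k_2+2,0)}(K,L)\big)$, since $\omega_3^{sub}$, resp. $\omega_2^{sub}$, is the degree-$2$, resp. degree-$1$, term of $BGG^{\bullet}$ — with the quasi-isomorphism $BGG^{\bullet}\hookrightarrow DR^{\bullet}$; it is injective. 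The first point I would check is that $\jmath$ is $Fr$-equivariant and that the square of Proposition~\ref{injectivity} commutes: both follow from the fact that the partial Frobenii of Section~6 are defined by the morphisms of complexes $\pi_v^{*},\pi_{\bar{v}}^{*}$ on $j^{\dagger}BGG^{\bullet}$ and $j^{\dagger}DR^{\bullet}$ (and on the subcanonical extensions), which commute with the differential $\Theta$ (so that $Fr$ descends to the quotient by $\Theta$) and with the embedding $BGG^{\bullet}\hookrightarrow DR^{\bullet}$.

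Granting this, everything hinges on the identity, valid in $\mathbb{H}^2(P^{tor}_{K,L},j^{\dagger}_{\overline{\PPP}_K^{tor,ord}}DR^{\bullet}(\mathcal{F}_L^{(\underline{k})})^{sub})[\m]$,
$$
\jmath\big([Fr(f)]\big)\;=\;Fr\big(\jmath([f])\big)\;=\;Fr\big(i(f)\big)\;=\;i(\varphi f),
$$
where $Fr(f)\in S^{\dagger}_{(\underline{k})}(K,L)[\m]$ is the Frobenius of $f$ viewed as an overconvergent form and $\varphi f$ on the right is the crystalline Frobenius of $f\in D_p$; the first equality is $Fr$-equivariance of $\jmath$, the second is commutativity of the square applied to $f\in Fil^{k_3}D_p$ (so $\jmath([f])=i(f)$), and the third is $Fr$-equivariance of $i$. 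Since $i$ and $\jmath$ are injective, this shows that $Fil^{k_3}D_p=Lf$ is $\varphi$-stable $\iff$ $\varphi f=\lambda f$ in $D_p$ for some $\lambda$ $\iff$ $[Fr(f)]=\lambda[f]$ in the quotient $\iff$ $Fr(f)-\lambda f\in\Theta\big(S^{\dagger}_{(k_1,k_3-2,k_2+2,0)}(K,L)[\m]\big)$. The theorem therefore reduces to identifying $\lambda$ with $\alpha_3$, together with the ``moreover'' clause; both come from weak admissibility.

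So assume $Fil^{k_3}D_p=Lf$ is $\varphi$-stable, say $\varphi f=\alpha_i f$. Since $Lf=Fil^{k_3}D_p$ and $Fil^{k_3+1}D_p=0$, the filtration induced on the line $Lf$ jumps only at $k_3$, so weak admissibility of $D_p$ forces $v_p(\alpha_i)\ge k_3$. Using $\sum_j v_p(\alpha_j)=t_H(D_p)=(k_1+1)+(k_2+2)+k_3$ and the chain $k_1+1<k_2+2<k_3$ (from $k_1\le k_2$ and $k_3\ge k_2+3$), I would rule out $i=1$ (otherwise $v_p(\alpha_1)\le v_p(\alpha_2)\le v_p(\alpha_3)$ are all $\ge k_3$, forcing $3k_3\le t_H(D_p)$) and $i=2$ (apply weak admissibility to the $\varphi$-stable subobject $D_{12}$, which then contains $Fil^{k_3}D_p$ and so has $t_H(D_{12})\ge(k_1+1)+k_3$, giving $v_p(\alpha_3)=t_H(D_p)-t_N(D_{12})\le k_2+2$, against $v_p(\alpha_3)\ge v_p(\alpha_2)\ge k_3$). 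Hence $\varphi f=\alpha_3 f$, i.e. $\lambda=\alpha_3$ above, and $D_3=Lf$ with $t_H(D_3)=k_3$ gives $v_p(\alpha_3)\ge k_3$. Now $D_{12}\cap Fil^{k_3}D_p=0$, so the filtration induced on $D_{12}$ jumps at $k_1+1$ and $k_2+2$ with $t_H(D_{12})=(k_1+1)+(k_2+2)$; weak admissibility gives $v_p(\alpha_3)=t_H(D_p)-t_N(D_{12})\le k_3$, hence $v_p(\alpha_3)=k_3$ and $v_p(\alpha_1)+v_p(\alpha_2)=(k_1+1)+(k_2+2)$. Thus $D_3$ and $D_{12}$ are subobjects of the weakly admissible $D_p$ with $t_N=t_H$, hence weakly admissible, hence admissible. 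Finally, the $\varphi$-stable line $D_1$ has $t_H(D_1)\in\{k_1+1,k_2+2\}$, and $t_H(D_1)=k_2+2$ is incompatible with $v_p(\alpha_1)\le v_p(\alpha_2)$ together with $v_p(\alpha_1)+v_p(\alpha_2)=(k_1+1)+(k_2+2)$; so $v_p(\alpha_1)\ge k_1+1$ and therefore $v_p(\alpha_2)\le k_2+2$, which is the asserted chain $k_1+1\le v_p(\alpha_1)\le v_p(\alpha_2)\le k_2+2<v_p(\alpha_3)=k_3$. Conversely, if $\varphi f-\alpha_3 f=\Theta(g)$ with $g\in S^{\dagger}_{(k_1,k_3-2,k_2+2,0)}(K,L)[\m]$, then $[Fr(f)]=\alpha_3[f]$ in the quotient, so $\varphi f=\alpha_3 f$ in $D_p$ by the displayed identity and injectivity of $i$, i.e. $Fil^{k_3}D_p$ is $\varphi$-stable, and the ``moreover'' follows as above.

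I expect the main obstacle to be the first step — verifying that $\jmath$ is $Fr$-equivariant and sits in a commuting square with $i$, equivalently that the overconvergent Frobenius on $S^{\dagger}_{(\underline{k})}(K,L)$ is carried by $\jmath$ to the crystalline Frobenius of $D_p$. This rests on the construction of the partial Frobenius via the canonical subgroup of $\AAA^{ext}$ over strict neighborhoods of $]\overline{\PPP}_K^{tor,ord}[$ being done at the level of the complexes $BGG^{\bullet}$, $DR^{\bullet}$ and their subcanonical extensions and compatibly with $\Theta$, all of which is set up in Section~6. By contrast the weak-admissibility computation in the third paragraph is routine book-keeping once the Hodge--Tate weights of $D_p$ and the valuations $v_p(\alpha_j)$ have been organized.
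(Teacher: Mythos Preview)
Your proposal is correct and follows essentially the same approach as the paper: embed $Fil^{k_3}D_p\subset D_p$ into the overconvergent hypercohomology via Proposition~\ref{injectivity}, use Frobenius-compatibility of that embedding to translate $\varphi$-stability of the line $Lf$ into the vanishing of $[Fr(f)-\lambda f]$ in the quotient $S^{\dagger}_{(\underline{k})}/\Theta(S^{\dagger}_{(k_1,k_3-2,k_2+2,0)})$, and then pin down $\lambda=\alpha_3$ and the valuation chain by weak admissibility.

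The only organizational difference is in the weak-admissibility step. The paper argues directly from the three inequalities $t_H(D_1)\ge k_1+1$, $t_H(D_{12})\ge(k_1+1)+(k_2+2)$, $t_H(D')\ge k_3$ (valid for any $\varphi$-stable sub) together with $t_H(D_p)=t_N(D_p)$ to force all inequalities to be equalities and hence $D'=D_3$; you instead run a case analysis ruling out $i=1,2$. Both are routine and yield the same conclusion. You are also more careful than the paper in distinguishing the overconvergent Frobenius $Fr$ on $S^{\dagger}$ from the crystalline $\varphi$ on $D_p$, and in spelling out why the right vertical map $\jmath$ is $Fr$-equivariant (the paper only states this for $i$ and uses the rest implicitly); your justification via the morphisms $\pi_v^*,\pi_{\bar v}^*$ of complexes from Section~6 is exactly what is needed.
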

	
	\begin{proof}
		We claim that if $Fil^{k_3}D_p$ is $\varphi$-stable, then $\varphi$=$\alpha_3\cdot id$ on it, and 
		$$
		k_1+1 \leq v_p(\alpha_1) \leq v_p(\alpha_2)\leq k_2+2<v_p(\alpha_3)=k_3
		$$
		hence $D_{12}$ and $D_3$ are admissible. By the admissibility of $D_p$, we have (cf. \cite[Sect. 2]{ghate2009})
		\begin{align*}
		t_H(D_p)&=t_N(D_p) \\
		k_1+1\leq t_H(D_1)&\leq t_N(D_1) = v_p(\alpha_1) \\
		(k_1+1)+(k_2+2)\leq t_H(D_{12})&\leq t_N(D_{12}) =v_p(\alpha_1)+v_p(\alpha_2) \\
		k_3=t_H(D')&\leq t_N(D')\leq v_p(\alpha_3)
		\end{align*}
		where $D'=Fil^{k_3}D_p$ equipped with the induced filtration. Then
		$$
		k_1+1 \leq v_p(\alpha_1) \leq v_p(\alpha_2)\leq k_2+2<v_p(\alpha_3)=k_3
		$$
		and $D'=D_3$, hence $D_3$ and $D_{12}$ are admissible.
		
		By Prop. \ref{injectivity}, we have
		$$
		\begin{tikzcd}	 	
		Fil^{k_3}D_p=L\cdot f  \arrow[d,hook] \arrow[r,hook]
		&  \dfrac{S^{\dagger}_{(\underline{k})}(K,L)[\m]}{\Theta(S^{\dagger}_{(k_1,k_3-2,k_2+2,0)}(K,L)[\m])} \arrow[equal,d]
		\\
		D_p	\arrow[r,hook,"i"]
		& \mathbb{H}^2(P^{tor}_{K,L}, j^{\dagger}_{\overline{\PPP}_K^{tor,ord}}DR^{\bullet}(\mathcal{F}_L^{(\underline{k})})^{sub})[\m] 
		\end{tikzcd}.
		$$
		Then $Fil^{k_3}D_p$ is $\varphi$-stable if and only if $\varphi f-\alpha_3 f=0$ in $\dfrac{S^{\dagger}_{(\underline{k})}(K,L)[\m]}{\Theta(S^{\dagger}_{(k_1,k_3-2,k_2+2,0)}(K,L)[\m])}$, and hence if and only if $\varphi f-\alpha_3 f= \Theta(g)$ in $S^{\dagger}_{(\underline{k})}(K,L)[\m]$ for some $g$.
	\end{proof}
	
	Then, twisting by a central character, we immediately have the following corollary.
	\begin{cor} \label{split}
		Assume that $f\in S_{(\underline{k})}(K,L)$ is of a cohomological weight $(\underline{k})$. Then the crystalline Galois representation $\sigma_p$ whose Hodge-Tate weight is $(k_1-k_4+1,k_2-k_4+2,k_3-k_4 )$ splits as $\sigma_p=\sigma_{p,12}\oplus\sigma_{p,3}$, whose Hodge-Tate weights are $(k_1-k_4+1,k_2-k_4+2)$ and $(k_3-k_4)$ respectively, if and only if $(\varphi-\alpha_3)f=\Theta(g)$ for some $g\in S^{\dagger}_{(k_1,k_3-2,k_2+2,k_4)}(K,L)[\m]$. In particular, $g$ is a companion form of $f$.
	\end{cor}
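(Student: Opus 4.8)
The plan is to reduce the corollary to the good-weight case already settled in Theorem \ref{phi-stable}, and then to convert the statement ``$Fil^{k_3}D_p$ is $\varphi$-stable'' furnished there into the asserted splitting of $\sigma_p$ via the Colmez--Fontaine equivalence between crystalline representations and weakly admissible filtered $\varphi$-modules.

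First I would reduce to a good weight. Given an arbitrary cohomological weight $(\underline k)=(k_1,k_2,k_3,k_4)$, twisting by characters built from the similitude and determinant characters of $\GG$ (concretely: tensoring the automorphic bundles, the local systems $\mathscr L^{(\underline k)}$, $\mathcal L^{(\underline k)}$, $\mathcal D^{(\underline k)}$, and $\sigma(f)$ by matching powers of one-dimensional automorphic bundles and of a crystalline character) replaces $(\underline k)$ by $(k_1+c,k_2+c,k_3+c,k_4+d)$ for any $c,d\in\ZZ$; taking $d=-k_4$ and $c=2-k_3$ lands in the good range, and the defining condition $k_3-k_2\ge 3$ of ``cohomological'' is unaffected. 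The remark after Proposition \ref{injectivity} already records that that proposition is insensitive to such a twist, and Theorem \ref{phi-stable} is too; under the twist all Hodge--Tate weights and all $\varphi$-eigenvalues move by the same integer, eigen-newforms go to eigen-newforms, and the ideal $\mathfrak m$ together with the irreducibility of $\sigma(f)$ are preserved. The relation $(\varphi-\alpha_3)f=\Theta(g)$ is preserved as well, since $\Theta$ is the BGG differential, which is equivariant for the operators in play (Section 6) and both sides acquire the same twist. Hence it suffices to treat good $(\underline k)$, where $k_4=0$ and the asserted Hodge--Tate weights are $(k_1+1,k_2+2,k_3)$, with $\sigma_{p,12}$, $\sigma_{p,3}$ of Hodge--Tate weights $(k_1+1,k_2+2)$ and $k_3$.

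Next I would show, for good $(\underline k)$, that $Fil^{k_3}D_p$ is $\varphi$-stable if and only if $\sigma_p$ splits as in the statement. Recall from the Hodge-filtration computation preceding Theorem \ref{phi-stable} that $Fil^{k_3}D_p=H^0(P^{tor}_{K,L},\omega_3^{sub})[\mathfrak m]=L\cdot f$ is one-dimensional and that the jumps of $Fil^\bullet D_p$ occur at $k_1+1<k_2+2<k_3$ (using $k_2+3\le k_3$). If $Fil^{k_3}D_p$ is $\varphi$-stable, then by Theorem \ref{phi-stable} it equals $D_3$ and both $D_3$, $D_{12}$ are admissible; since $Gr^{k_3}D_p$ is one-dimensional, the Hodge filtration splits along $D_p=D_3\oplus D_{12}$ (for $a\le k_3$ one has $D_3\subset Fil^aD_p$, whence $Fil^aD_p=D_3\oplus(D_{12}\cap Fil^aD_p)$, and for $a>k_3$ both sides vanish), so $D_p=D_3\oplus D_{12}$ as filtered $\varphi$-modules, and Colmez--Fontaine yields $\sigma_p=\sigma_{p,12}\oplus\sigma_{p,3}$ with $D_{cris}(\sigma_{p,3})=D_3$, $D_{cris}(\sigma_{p,12})=D_{12}$; their Hodge--Tate weights are read off the jumps of the induced filtrations as $k_3$ and $(k_1+1,k_2+2)$ (forced since $Gr^\bullet D_p$ is concentrated in degrees $k_1+1,k_2+2,k_3$, each with multiplicity one). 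Conversely, if $\sigma_p=\sigma_{p,12}\oplus\sigma_{p,3}$ with these Hodge--Tate weights, then $D_p=D_{cris}(\sigma_{p,12})\oplus D_{cris}(\sigma_{p,3})$ as filtered $\varphi$-modules; since both jumps of $D_{cris}(\sigma_{p,12})$ are $\le k_2+2<k_3$, we get $Fil^{k_3}D_p=Fil^{k_3}D_{cris}(\sigma_{p,3})=D_{cris}(\sigma_{p,3})$, which is $\varphi$-stable.

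Finally I would combine this equivalence with the criterion of Theorem \ref{phi-stable} --- that $Fil^{k_3}D_p$ is $\varphi$-stable iff $(\varphi-\alpha_3)f=\Theta(g)$ for some cuspidal overconvergent $g\in S^{\dagger}_{(k_1,k_3-2,k_2+2,0)}(K,L)[\mathfrak m]$ --- to obtain the corollary for good weight, and untwist (first step) to get it for arbitrary cohomological weight, now with $g\in S^{\dagger}_{(k_1,k_3-2,k_2+2,k_4)}(K,L)[\mathfrak m]$. That $g$ is a companion form of $f$ then follows because $\Theta$ commutes with the prime-to-$p$ Hecke operators (Section 6): $g\in[\mathfrak m]$ shares the Hecke eigensystem of $f$, hence its associated Galois representation is again $\sigma(f)$. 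I expect the only real work to be bookkeeping: the substantive inputs --- the $\varphi$-equivariant injectivity of $i$ in Proposition \ref{injectivity}, resting on the monodromy Theorem \ref{connected}, and the admissibility and slope statements in Theorem \ref{phi-stable} --- are already in place, so what remains is to pin down precisely the crystalline twisting character in the first step (so that Colmez--Fontaine still applies after twisting and the Hodge--Tate weights come out as claimed), together with the elementary linear-algebra observation in the second step that a filtered $\varphi$-module whose top graded piece is one-dimensional splits as soon as that piece is $\varphi$-stable.
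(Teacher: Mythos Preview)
Your proposal is correct and follows the same route as the paper, which dispatches the corollary in a single line (``twisting by a central character, we immediately have the following corollary'') after Theorem~\ref{phi-stable}. You have simply spelled out what the paper leaves implicit: the central-character twist reducing to good weight, and the elementary filtered-$\varphi$-module argument that $\varphi$-stability of $Fil^{k_3}D_p$ (together with the admissibility of $D_3$ and $D_{12}$ from Theorem~\ref{phi-stable}) is equivalent to the asserted splitting of $\sigma_p$.
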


	%\phantomsection
	%\addcontentsline{toc}{section}{References}

	\bibliographystyle{alpha}
	%\bibliography{mybib}

\end{document}